\newcommand{\NN}{\mathbb{N}}
\newcommand{\ZZ}{\mathbb{Z}}
\newcommand{\RR}{\mathbb{R}}
\newcommand{\CC}{\mathbb{C}}
\newcommand{\HH}{\mathbb{H}}
\newcommand{\teich}{\mathcal T}
\newcommand{\proj}{\mathcal P}
\newcommand{\Mod}{\mathrm{Mod}}
\newcommand{\gr}{\mathrm{gr}}
\newcommand{\im}{\mathrm{im}}
\newcommand{\Gr}{\mathrm{Gr}}
\newcommand{\arccosh}{\mathrm{arccosh}}
\newcommand{\union}{\cup}
\theoremstyle{definition}
\newtheorem{defi}{Definition}[section]
\theoremstyle{remark}
\theoremstyle{plain}
\newtheorem{lemma}{Lemma}[section]
\newtheorem{prop}[lemma]{Proposition}
\newtheorem{theorem}{Theorem}[section]
\newtheorem*{theorem*}{Theorem}
\newtheorem{cor}[theorem]{Corollary}
\begin{document}

\author{Sebastian W. Hensel}
\title{Iterated grafting and holonomy lifts of Teichmüller space} 
\maketitle

\section{Introduction}
\label{sec:intro}
Let $S$ be a closed oriented surface of genus $g \geq 2$. A \textit{marked complex structure} on $S$
is a pair $(X, f)$, where $X$ is a Riemann surface and $f$ is a \textit{marking}, that is a
orientation preserving homeomorphism $f:S \to X$.
The \textit{Teichmüller space} $\teich(S)$ of $S$ is the space of marked complex structures up to
isotopy. This set is made into a complete metric space by the \textit{Teichmüller metric} $d_\teich$. The uniformization theorem allows us to identify $\teich(S)$ with the space of marked hyperbolic structures on $S$.

One can also consider the (finer) notion of \textit{complex projective surfaces}. A complex projective surface $Z$ is given by a topological surface $S$, together with an atlas for $S$ whose charts have values in the Riemann sphere $\CC P^1$, and such that the chart transition maps are (locally) restrictions of Möbius transformations. 
Following the definition of Teichmüller space, one considers the space $\proj(S)$ of marked complex projective structures up to isotopy.
Since Möbius transformations are biholomorphic, any complex projective surface also is a Riemann surface, and thus we obtain a (forgetful) projection
$$\pi: \proj(S) \to \teich(S)$$
On the other hand, by the uniformization theorem any Riemann surface $X$ of genus $g \geq 2$ can be written as $X = \HH^2/\Gamma$, where $\HH^2$ is the upper half plane and $\Gamma$ is a discrete subgroup of $\mathrm{PSL}_2(\RR)$. The covering projection $\HH^2 \to \HH^2/\Gamma$ induces a natural projective structure on $X$ which we call the \textit{Fuchsian projective structure}.
In other words, we obtain a map
$$s: \teich(S) \to \proj(S)$$
such that $\pi\circ s$ is the identity. Thus $\teich_0(S) := s(\teich(S)) \subset \proj(S)$ is a copy of Teichmüller space in the space of projective structures.

Associated to a complex projective surface $Z$ is its \textit{holonomy representation} $\rho_Z:\pi_1(S) \to \mathrm{PSL}_2(\CC)$ (for details, see for example \cite[section 3.5]{Thurston:cr} or \cite{McMullen:1998mz}).
We say a projective surface $Z$ has \textit{Fuchsian holonomy} if $\rho_Z:\pi_1(S) \to \mathrm{PSL}_2(\CC)$ is an isomorphism onto (a conjugate of) a Fuchsian group.
A basic example of projective surfaces with Fuchsian holonomy is given by the Fuchsian projective structures defined above -- the holonomy group of $s(\HH^2/\Gamma)$ is just $\Gamma$.

One can describe all projective surfaces having Fuchsian holonomy explicitly using \textit{grafting} along weighted geodesic multicurves. 
Informally, to obtain the grafting $\Gr_{t\gamma}X$ of $X$ along the weighted geodesic $t\gamma$ one inserts a flat euclidean cylinder of height $t$ at $\gamma$ (cf. figure \ref{fig:step}) to obtain a new projective structure from the Fuchsian one (see \cite{Tanigawa:1995fk} or \cite{McMullen:1998mz} for details).
For weighted multicurves $t_1\gamma_1 + \ldots + t_n\gamma_n$ one cuts $X$ at all $\gamma_i$ and glues in the flat cylinders $\gamma_i\times[0,t_i]$ at the respective boundary components.

Goldman's theorem (\cite{Goldman:1987oe}) states that a projective structure $Z \in \proj(S)$ has Fuchsian holonomy if and only if it is of the form $Z = \Gr_{\lambda}X$ for some hyperbolic surface $X$ and an 
\textit{integral lamination} $\lambda$, that is a weighted multicurve $\lambda= 2\pi n_1\gamma_1 + \ldots + 2\pi n_r\gamma_r$ with $n_i \in \NN$. 
Thus the projective structures with Fuchsian holonomy -- the ``\textit{holonomy lifts}'' of Teichmüller space -- are given by
$$\teich_\lambda(S) := \Gr_{\lambda}(\teich(S))$$
for integral laminations $\lambda$. 
A general classification theorem for projective structures (see \cite{Kamishima:1992lr}) implies that the map ${\mathcal{IL}}(S)\times\teich(S) \to \proj(S)$ given by grafting is injective, where $\mathcal{IL}(S)$ is the space of integral laminations. 
Thus the holonomy lifts of Teichmüller space are disjoint slices in $\proj(S)$. From the same classification theorem it also follows that for any integral $\lambda$, $\Gr_\lambda:\teich(S) \to \proj(S)$ is a homeomorphism onto its image, and thus the slices $\teich_\lambda(S)$ are copies of Teichmüller space.

As the \textit{conformal grafting map} $\gr_\lambda = \pi\circ\Gr_\lambda: \teich(S) \to \teich(S)$ also is a homeomorphism for any integral lamination $\lambda$ by a result of Tanigawa (\cite{Tanigawa:1995fk}, see also \cite{McMullen:1998mz} and \cite{Scannell:2002yq})
a holonomy lift $\teich_\lambda(S)$ of Teichmüller space inherits from $\teich(S)$ two natural parametrizations:
On the one hand, we have the grafting coordinates $Z = \Gr_\lambda X \mapsto X$, and on the other hand there are the conformal coordinates $Z \mapsto \pi(Z)$. 
To understand the relation of these two coordinate systems, one has to study the conformal grafting map.

In this paper we consider the lifts of Teichmüller geodesics into the slices $\teich_\lambda(S)$. For a hyperbolic surface $X \in \teich(S)$ and a simple closed curve $\gamma$ on $S$, let $l_X(\gamma)$ be the length of the hyperbolic geodesic on $X$ in the free homotopy class of $\gamma$. 
We say that a (weighted) multicurve $\lambda = a_1\gamma_1 + \ldots a_r\gamma_r$ has length less than $\epsilon$ on $X$ if $l_X(\gamma_i) < \epsilon$ for all $i=1, \ldots, r$.We obtain the following
\begin{theorem}
  There is a number $\epsilon>0$ such that the following holds. Let $\delta>0$ and an integral lamination $\lambda$ be given. Consider the set $\,\mathcal{U} \subset \teich(S)$ of all hyperbolic surfaces on which $\lambda$ has length less than $\epsilon$ and each simple closed curve disjoint from $\lambda$ has length at least $\delta$.

  Then there is a number $r>0$, such that for each $X \in \mathcal{U}$ and $n \in \NN$ the holonomy lift 
    $$g_n(s) = \gr_{n\lambda}\left( \rho_{\lambda,X}(s) \right)$$
of the Teichmüller geodesic $\rho_{\lambda,X}$ through $X$ in direction $\lambda$ is contained in the $r$-tube around the geodesic $\rho_{\lambda, X}$.
\end{theorem}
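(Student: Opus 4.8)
\emph{Strategy.} The direction $\lambda$ singles out the Jenkins--Strebel quadratic differential $q$ on $X$ whose vertical foliation is the weighted multicurve $\lambda$; its closed vertical trajectories sweep out cylinders $C_i$ with core $\gamma_i$, and along the geodesic the cylinder $C_i$ acquires modulus $e^{2s}\,\mathrm{mod}_X(C_i)$, so that $\mathrm{Ext}_{\rho_{\lambda,X}(s)}(\gamma_i)$ is comparable to $e^{-2s}/\mathrm{mod}_X(C_i)$---monotone in $s$, tending to $0$ as $s\to+\infty$---while the Fenchel--Nielsen twist of each $\gamma_i$ is constant along $\rho_{\lambda,X}$. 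I would fix the universal $\epsilon$ small enough that on any surface where $\lambda$ has length $<\epsilon$ the curves $\gamma_i$ lie in the thin part governed by Minsky's product regions theorem: there the Teichm\"uller metric is, up to an additive error depending only on $S$, the supremum metric on $\prod_i\HH_i\times\teich(S\setminus\bigcup_i\gamma_i)$, each $\HH_i$ recording the twist and extremal length of $\gamma_i$. The plan is to (i) describe how $\gr_{n\lambda}$ acts in these coordinates, (ii) control the complementary factor along the geodesic, and (iii) patch the regime where some $\gamma_i$ has left the thin part.

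\emph{The grafting estimate.} By Thurston's description, if $\lambda$ has length $<\epsilon$ on a hyperbolic surface $Y$, then $\gr_{n\lambda}Y$ is obtained from $Y$ by cutting along the geodesics $\gamma_i$ and inserting flat cylinders of modulus $2\pi n n_i/l_Y(\gamma_i)$, without changing the conformal type of $Y\setminus\bigcup_i\gamma_i$ and without introducing twist (up to $O(1)$). Since $l_Y(\gamma_i)\asymp\mathrm{Ext}_Y(\gamma_i)$ for short curves, and since moduli of annuli glued along a common core add up to a bounded error (a Gr\"otzsch-type estimate), the maximal annulus of $\gamma_i$ in $\gr_{n\lambda}Y$ has modulus comparable to $(1+n)\,\mathrm{Ext}_Y(\gamma_i)^{-1}$, hence $\mathrm{Ext}_{\gr_{n\lambda}Y}(\gamma_i)\asymp\mathrm{Ext}_Y(\gamma_i)/n$. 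In the product coordinates this says precisely: $\gr_{n\lambda}$ fixes the factor $\teich(S\setminus\bigcup_i\gamma_i)$, moves each $\HH_i$-coordinate a distance $\tfrac12\log n+O_\lambda(1)$ in the extremal-length direction, and $O(1)$ in the twist direction.

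\emph{Complement control and the main comparison.} I would then show that the conformal structure of $S\setminus\bigcup_i\gamma_i$ drifts along $\rho_{\lambda,X}$ at a rate $\lesssim\max_i\mathrm{Ext}_{\rho_{\lambda,X}(s)}(\gamma_i)$---the two sides of a thin collar being nearly decoupled---so that, the extremal lengths being exponential in $s$, integration over the whole range of $s$ where $\lambda$ has length $<\epsilon$ along the geodesic shows this factor stays within an additive $O(\epsilon)$ of its value at $X$. Combining with the grafting estimate and Minsky's theorem: for such $s$, $g_n(s)$ and $\rho_{\lambda,X}(s+\tfrac12\log n)$ have the same $\teich(S\setminus\bigcup_i\gamma_i)$-coordinate up to bounded error, and agree in every $\HH_i$-coordinate up to $O_\lambda(1)$ (the vertical shift from grafting matches the shift of the parameter by $\tfrac12\log n$; twists match up to $O(1)$), so $d_\teich\bigl(g_n(s),\rho_{\lambda,X}(s+\tfrac12\log n)\bigr)\le r_0$, with $r_0$ depending only on $\lambda$, $\delta$, and $S$.

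\emph{The remaining range; the main obstacle.} For $s$ at which some $\gamma_i$ has already left the thin part along $\rho_{\lambda,X}$, the matching parameter is no longer $s+\tfrac12\log n$: one should track $g_n(s)$ to a point $\rho_{\lambda,X}(\sigma(s))$ with $\sigma(s)$ interpolating between $s+\tfrac12\log n$ and $s$ (once $l_{\rho_{\lambda,X}(s)}(\gamma_i)$ dominates $n$ the inserted cylinders become conformally thin and $g_n(s)$ sits within $o(1)$ of $\rho_{\lambda,X}(s)$ itself). Completing the proof in this range amounts to showing that the full conformal data of $g_n(s)$ and of $\rho_{\lambda,X}(\sigma(s))$---extremal lengths of all curves together with the thick part---stay comparable, uniformly, through the whole transition. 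I expect this to be the crux of the argument: it requires precise and uniform control of $l_{\rho_{\lambda,X}(s)}(\gamma_i)$ and of the complement-drift when the $\gamma_i$ are long, keeping the fellow-traveling constant independent of both $n$ and $X\in\mathcal U$---a Rafi-type thick--thin analysis of the Teichm\"uller geodesic rather than a single use of the product regions theorem.
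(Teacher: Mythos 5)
Your proposal shares one key tool with the paper (Minsky's product regions theorem) but is structurally a different argument, and the difference is exactly where your gap lies. The paper does \emph{not} try to compare $g_n(s)=\gr_{n\lambda}\rho_{\lambda,X}(s)$ directly with a reparametrized point $\rho_{\lambda,X}(\sigma(s))$ of the geodesic. Instead it factors the comparison through the grafting ray, using a triangle inequality with three terms: (a) $\gr_{n\lambda}\rho_{\lambda,X}(s)$ versus $\gr_{n\lambda}\gr_{s'\lambda}X$, controlled by Minsky's theorem together with the length and twist bounds from Lemma~\ref{deltabounding} and Lemma~\ref{twistbound}; (b) $\gr_{n\lambda}\gr_{s'\lambda}X$ versus $\gr_{t'\lambda}X$, which is the iteration theorem (Theorem~\ref{quasiflow}) via Theorem~\ref{sec:holon-lifts-graft:main1}; and (c) $\gr_{t'\lambda}X$ versus $\rho_{\lambda,X}(t)$, which is the Diaz--Kim fellow-traveling theorem. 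The crucial structural point is that step (c) transfers the problem from the Teichm\"uller geodesic to the grafting ray, on which $\lambda$ stays short for all parameters (indeed it only gets shorter), so every subsequent estimate takes place in the thin regime. And in step (a), both surfaces are already grafted along $n\lambda$, so the $\gamma_i$ are again guaranteed short on both. Your approach tries to do everything in one shot and therefore has to follow the surfaces through the transition out of the thin part; you correctly flag this as ``the crux,'' and it is a genuine gap in your write-up, not a routine patch. The paper's route simply never meets this transition.

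A second, smaller gap: you do not establish uniformity over all $X\in\mathcal{U}$. Your estimates are local (``$O_\lambda(1)$'', ``depending only on $\lambda,\delta,S$'') but you never actually argue why the implicit constants in your Gr\"otzsch-type modulus estimates, in the complement-drift bound, and in the matching of $\HH_i$-coordinates can be chosen uniformly over the non-compact set $\mathcal{U}$. The paper closes this by noting that the estimates depend continuously on $\Pi_0(X)\in\teich(S\setminus\lambda)$, and that the mapping class group of $S\setminus\lambda$ acts cocompactly on $\Pi_0(\mathcal{U})$ (this is where the lower bound $\delta$ on disjoint curves is used). Without some such cocompactness argument, a bound ``for each $X$'' does not yield a single $r$.

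Finally, the technical heart of the paper---the quasiconformal comparison map behind the iteration theorem, with its explicit $l^{1/8}$ error---is what makes step (b) rigorous and $n$-independent. Your substitute (``moduli of annuli glued along a common core add up to a bounded error,'' ``grafting introduces $O(1)$ twist'') is morally in the right direction but is stated only as a heuristic; precisely pinning down the $O_\lambda(1)$ additive error in the $\HH_i$-coordinate, \emph{uniformly in $n$ and in the base surface}, is what the paper's Sections~3 and~4 exist to do. If you want your outline to become a proof along these lines, you would need to either prove a version of the iteration theorem yourself or reroute through the grafting ray as the paper does to dodge the thick--thin transition.
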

Thus, the conformal grafting map (or ``holonomy lift map'') is well behaved on Teichmüller geodesics once the curves are
short: grafting in the direction of the ray basically moves forward on the geodesic.

The theorem is proved by studying the behaviour of the holonomy lift map on \textit{grafting rays}.  A grafting ray is a curve of the form $t \mapsto \gr_{t\lambda}X$ in Teichmüller space. These curves share many properties with Teichmüller geodesics.
For example, for any two points $X,Y$ in Teichmüller space, there is a unique grafting ray from $X$ to $Y$ (this follows from a far more general result in \cite{Dumas:2007fj}). Teichmüller geodesics are contained in Teichmüller disks, grafting rays also naturally define holomorphic disks in $\teich(S)$ (\textit{complex earthquake disks}, cf. \cite{McMullen:1998mz}).
Furthermore, grafting rays have the same asymptotic behaviour as Teichmüller geodesics. 
Diaz and Kim \cite{KD07} have shown that for any $X \in \teich(S)$ and integral lamination $\lambda$, the grafting ray $\gr_{t\lambda}X$ is contained in an $L$-tube around the Teichmüller geodesic ray from $X$ in direction $\lambda$, where $L$ depends on $X$.

We show the following theorem about holonomy lifts of grafting rays.
\begin{theorem}
  There is a number $\epsilon>0$ such that the following holds.
  Let $X$ be a hyperbolic surface and $\lambda$ be an integral lamination of length less than $\epsilon$ on $X$.
  \begin{enumerate}[i)]
  \item There is an $r>0$, such that for each $n$ the holonomy lift 
    $$g_n(s) = \gr_{n\lambda}\left( \gr_{s\lambda}X \right)$$
    is contained in the $r$-tube around the grafting ray $s \mapsto \gr_{s\lambda}X$.
   \item There is an $R>0$ such that the following holds. Let $\eta$ 
     be an short integral lamination, disjoint from $\lambda$. Then the holonomy lifts
     $$g_\eta(s) = \gr_{\eta}\left(\gr_{s\lambda}X\right)$$
     are contained in the $R$-tube around the grafting ray $\gr_{s\lambda}(\gr_{\eta}X)$.
  \end{enumerate}
\end{theorem}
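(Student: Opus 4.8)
The plan is to reduce both parts to a single explicit estimate for how conformal grafting along an integral lamination distorts hyperbolic geometry when that lamination is very short, and then to package these estimates using Minsky's product regions theorem for the Teichm\"uller metric. First I would fix once and for all a constant $\epsilon$ below the Margulis constant and small enough that a closed geodesic of length $<\epsilon$ bounds an enormous embedded collar, that the Maskit comparison between the hyperbolic length $\ell_\gamma$ of a short curve and the modulus of its maximal embedded annulus (so that $\ell_\gamma$ and $1/\mathrm{mod}(\gamma)$ agree up to a universal factor) holds with uniform constants, and that Minsky's product regions theorem applies to the $\epsilon$-thin part. Write $\lambda = 2\pi n_1\gamma_1 + \dots + 2\pi n_r\gamma_r$.

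The main step is a ``grafting in collars'' lemma. If $W$ is a hyperbolic surface on which every $\gamma_i$ has length $<\epsilon$ and $k\in\NN$, then the conformal structure underlying $\gr_{k\lambda}W$ is obtained from $W$ by cutting along the geodesic representatives of the $\gamma_i$ and inserting flat cylinders of circumference $\ell_W(\gamma_i)$ and height $2\pi k n_i$ --- all of this taking place inside the collars, so the conformal structure on $W\setminus\lambda$ is untouched. Concatenating each inserted cylinder with the two surrounding half-collars and applying the modulus/length dictionary should give, with uniform constants, $\mathrm{mod}_{\gr_{k\lambda}W}(\gamma_i)\asymp(1+kn_i)\,\mathrm{mod}_W(\gamma_i)$ and hence $\ell_{\gr_{k\lambda}W}(\gamma_i)\asymp\ell_W(\gamma_i)/(1+kn_i)$. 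Moreover, pinching all the $\gamma_i$ kills the inserted cylinders, so $\gr_{k\lambda}W$ has the same cusped limit in $\teich(S\setminus\lambda)$ as $W$ does; by the standard pinching estimate this keeps the $\teich(S\setminus\lambda)$-coordinate of $\gr_{k\lambda}W$, as well as its Fenchel--Nielsen twists at the $\gamma_i$, in a uniformly bounded neighbourhood of those of $W$ --- and since pure grafting carries no earthquake component, also uniformly close to those of $W$ itself.

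Given this, part i) follows by synchronising the iterated graft with a point further out on the ray. Applying the lemma with $W=X$, $k=s$ describes $Y_s:=\gr_{s\lambda}X$, and applying it again with $W=Y_s$, $k=n$ describes $g_n(s)$; composing, $\mathrm{mod}_{g_n(s)}(\gamma_i)\asymp(1+nn_i)\,\mu_i(s)$, where $\mu_i(t):=\mathrm{mod}_{\gr_{t\lambda}X}(\gamma_i)$ is affine in $t$ with slope comparable to $\mathrm{mod}_X(\gamma_i)\asymp 1/\ell_X(\gamma_i)$. Choosing $t=t(n,s)$ so that $\mu_i(t)=\mathrm{mod}_{g_n(s)}(\gamma_i)$ for the index $i$ achieving $\max_i n_i$, a short computation shows that for every $i$ the ratio $\mu_i(t)/\mathrm{mod}_{g_n(s)}(\gamma_i)$ lies in a fixed interval $[c(\lambda),C(\lambda)]$, uniformly in $n$ and $s$. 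Hence in Minsky's coordinates the points determined by $g_n(s)$ and by $\gr_{t\lambda}X$ have comparable imaginary parts in every $\HH$-factor and, by the twist control above, boundedly close real parts, so they are at uniformly bounded hyperbolic distance; and their $\teich(S\setminus\lambda)$-coordinates both lie in a bounded neighbourhood of the cusped limit of $X$. Minsky's product regions theorem then gives $d_\teich(g_n(s),\gr_{t\lambda}X)\le r$ with $r=r(\lambda)$ independent of $n$ and $s$, which is i). For ii) I would compare $g_\eta(s)=\gr_\eta(\gr_{s\lambda}X)$ with $\gr_{s\lambda}(\gr_\eta X)$ at the same parameter $s$: since $\eta$ and $\lambda$ are disjoint, grafting along either one happens inside collars disjoint from the other, so (again via the cusped-limit argument) it perturbs the moduli of the other's curves and the $\teich(S\setminus(\lambda\cup\eta))$-coordinate only by uniformly bounded amounts; in all the relevant ratios the weights of $\lambda$ and $\eta$ cancel, so Minsky's theorem yields the $R$-tube bound with $R$ depending only on $S$.

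The crux is the grafting-in-collars lemma, and within it the assertion that grafting supported in very thin collars shrinks the core geodesics by exactly the predicted factor while leaving the rest of the hyperbolic structure essentially fixed. This combines the quantitative length/modulus comparison for short curves with the fact that gluing a fixed bordered Riemann surface to Euclidean cylinders of modulus $\ge T$ yields a uniformization that varies by $O(1/T)$ as $T\to\infty$; obtaining these with constants uniform over the non-compact family of surfaces in play is the real work, and the remaining steps are bookkeeping inside Minsky's product regions theorem.
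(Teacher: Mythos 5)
Your approach is genuinely different from the paper's. The paper proves this theorem by applying the iteration and splitting theorems (Theorems~\ref{quasiflow} and~\ref{quasicomm}), which are established by explicitly constructing quasiconformal comparison maps $\gr_\eta(\gr_\lambda X)\to\gr_{\eta\widetilde{+}\lambda}X$ and estimating their dilatation; the proof of Theorem~\ref{sec:holon-lifts-graft:main1} itself does not invoke Minsky's product regions theorem at all. You instead propose to compare moduli, lengths, twists and the thick-part coordinate and then pass through Minsky's theorem. That strategy does appear in the paper, but only in the proof of the corollary about Teichm\"uller geodesics, where it is applied \emph{after} Theorem~\ref{sec:holon-lifts-graft:main1} is available; you are proposing to use it one step earlier.

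There is a genuine gap in the twist control. You assert that since ``pure grafting carries no earthquake component'' the Fenchel--Nielsen twists of $\gr_{k\lambda}W$ at the $\gamma_i$ stay \emph{uniformly} close to those of $W$, and you then use ``boundedly close real parts'' in each $\HH$-factor. This is too strong. The hyperbolic geodesic $\gamma_i'$ in the grafted surface is not the flat core $\delta_i$ of the inserted cylinder, and quantifying the resulting angular discrepancy is exactly the content of Section~\ref{sec:twist} of the paper (Lemma~\ref{twistbound}) and of \cite[Prop.~3.5]{KD07}. What those results give is that the \emph{product} of twist and length is uniformly bounded, i.e.\ the twist may grow like $1/\ell$ as the curve shrinks along the ray. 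This still suffices for Minsky's theorem (a real-part discrepancy of size $O(1/\ell)$ at imaginary height $\asymp 1/\ell$ has bounded $\HH^2$-distance), but the argument must be phrased in this length-normalized form; the unnormalized statement you write down appears to be false as $s\to\infty$. Beyond the twist, your ``grafting in collars'' lemma---the comparison $\ell_{\gr_{k\lambda}W}(\gamma_i)\asymp\ell_W(\gamma_i)/(1+kn_i)$ with uniform constants and the claim that the $\teich(S\setminus\lambda)$ coordinate is uniformly controlled---is essentially the paper's Lemma~\ref{deltabounding} (parts i), iii) and iv)), which requires the inductive argument given there; the separation statement iii) is what makes the thick-part claim rigorous, and it is not an immediate consequence of ``pinching kills the inserted cylinders.'' You correctly flag this lemma as ``the real work,'' but as written it compresses the bulk of the paper's technical content, including a twist estimate stated in a form that would not hold.
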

The difficult part of this theorem is to establish that the constants $r$ and $R$ do not depend on n (or $\eta$).
The translation length of the grafting map $X \mapsto \gr_{n\cdot\lambda}X$ can be estimated from the length of $\lambda$ on $X$ and the weights in $\lambda$, so each individual holonomy lift will be contained in a suitable tube around the grafting ray. However, as the translation length is unbounded in $n$ it is a priori not clear that \textit{all} holonomy lifts lie in a single tube.

We also consider grafting rays through holonomy lifts of some starting point $X$. 
\begin{theorem}
   Let $X$ be a hyperbolic surface and $\gamma$ a simple closed geodesic on $X$. Consider the grafting rays
  $$c_{n,m}(t) = \gr_{t\gamma}(\gr_{2\pi m\gamma}^nX).$$
  For large values of $n$, the $c_{n,m}$ accumulate exponentially fast
  $$d_\teich(c_{n+1,m}(t), c_{n,m}(2\pi m + a_{n,m}t)) \leq C\cdot q^n$$
  for some $0<q<1, a_{n,m} > 1$ and a constant $C$ depending on $X$. In particular, these rays accumulate in the Hausdorff topology on Teichmüller space.
\end{theorem}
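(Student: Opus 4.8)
The plan is to view $c_{n,m}$ as the grafting ray issuing from $X_n := \gr_{2\pi m\gamma}^{n}X$. Put $\ell_n := l_{X_n}(\gamma)$, and let $\widehat X_n$ be $X_n$ cut along the geodesic representative of $\gamma$, carrying its induced conformal structure; by the very definition of conformal grafting,
\[
\gr_{T\gamma}(X_n) \;=\; \widehat X_n \,\cup_\gamma\, A_n(T),
\]
where $A_n(T)$ is a conformal cylinder of modulus $T/\ell_n$, glued onto the two boundary circles of $\widehat X_n$ along the natural parametrisation of $\gamma$. Note that $c_{n+1,m}(0) = X_{n+1} = \gr_{2\pi m\gamma}(X_n) = c_{n,m}(2\pi m)$, so the two curves to be compared do start at the same point. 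The goal is to show that, after reparametrising $c_{n,m}$ by $a_{n,m} := \ell_n/\ell_{n+1} > 1$, both sides become the \emph{same} surface $\widehat X_n$ with in‑glued cylinders whose moduli differ by a bounded amount, and then to turn this into an estimate for $d_\teich$.

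\textbf{Decay of the core length.} The flat cylinder of height $2\pi m$ inserted in passing from $X_n$ to $X_{n+1}$ is an embedded annulus of modulus at least $2\pi m/\ell_n$ around $\gamma$; since the core geodesic of an embedded annulus of modulus $M$ in a hyperbolic surface has length at most $\pi/M$, we get $\ell_{n+1} \le \ell_n/(2m)$, hence $\ell_n \le (2m)^{-n}\ell_0 \to 0$. Enlarging the annulus to include the collar of $\gamma$ in $X_n$ gives, for large $n$, the sharper $\ell_n/\ell_{n+1} = 2m+1 + O(\ell_n)$, so $a_{n,m} \to 2m+1$.

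\textbf{The one-step comparison.} Fix $n$ with $\ell_n < \epsilon$. Cutting $X_{n+1} = \widehat X_n \cup_\gamma A_n(2\pi m)$ along the geodesic representative of $\gamma$ yields $\widehat X_{n+1}$, which is conformally $\widehat X_n$ with a cylinder attached along each of its two boundary circles, the two moduli summing to $2\pi m/\ell_n - \delta_n$ for some $\delta_n = O(1)$ (bounded uniformly in $n$; it measures the deviation of this geodesic from a straight core curve of the long grafted cylinder, and is in fact $o(1)$ by the collar lemma). Composing cylinders end to end,
\[
\gr_{t\gamma}(X_{n+1}) \;=\; \widehat X_n \,\cup_\gamma\, B', \qquad \mathrm{mod}(B') \;=\; 2\pi m/\ell_n - \delta_n + t/\ell_{n+1},
\]
while directly from the definition, with $a_{n,m} = \ell_n/\ell_{n+1}$,
\[
\gr_{(2\pi m + a_{n,m}t)\gamma}(X_n) \;=\; \widehat X_n \,\cup_\gamma\, B'', \qquad \mathrm{mod}(B'') \;=\; 2\pi m/\ell_n + t/\ell_{n+1},
\]
both glued along $\gamma$ by the same natural parametrisation. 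Thus $c_{n+1,m}(t)$ and $c_{n,m}(2\pi m + a_{n,m}t)$ are the same surface $\widehat X_n$ with cylinders glued in the same way, their moduli differing by the bounded quantity $\delta_n$ inside a cylinder of modulus at least $2\pi m/\ell_n$. Matching the two moduli by an affine stretch supported in the cylinder and taking the identity elsewhere gives a quasiconformal homeomorphism of dilatation $1 + O(\ell_n)$ --- uniformly in $t \ge 0$, a bounded modulus discrepancy being only cheaper to absorb as the cylinder lengthens --- whence $d_\teich\bigl(c_{n+1,m}(t),\, c_{n,m}(2\pi m + a_{n,m}t)\bigr) \le C_1 \ell_n$ for a constant $C_1 = C_1(X)$.

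\textbf{Conclusion, and the main obstacle.} With $\ell_n \le (2m)^{-n}\ell_0$ this gives the asserted bound with $q = 1/(2m) < 1$ and $C = C(X)$ (enlarged to absorb the finitely many $n$ with $\ell_n \ge \epsilon$), and $a_{n,m} = l_{X_n}(\gamma)/l_{X_{n+1}}(\gamma) > 1$ tends to $2m+1$. Since $\sum_n Cq^n < \infty$, the rays $c_{n,m}$, reparametrised compatibly, are uniformly Cauchy on every compact parameter interval, so they converge and in particular accumulate in the Hausdorff topology on $\teich(S)$. I expect the delicate point --- and the one from which the required uniformity really flows --- to be the structural claim in the one-step comparison: that $\widehat X_n$ (equivalently $X_n$ with the collar of $\gamma$ removed) is conformally a \emph{fixed} base surface for all $n$, and that the geodesic representative of $\gamma$ on $X_{n+1}$ sits inside the grafted cylinder accurately enough that $\delta_n$ is controlled independently of $n$. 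This is precisely the type of estimate for grafting along short curves developed in the proofs of the preceding theorems; granting it, uniformity in $t$ is automatic, both rays being conformally the same fixed surface with in‑glued cylinders of nearly equal, unboundedly growing modulus.
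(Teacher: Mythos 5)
Your proposal takes a genuinely different route from the paper, and it has a gap at exactly the point where the paper invokes its hardest lemma. The paper's proof is a three-line application of Theorem \ref{quasiflow}: writing $X_n = \gr_{2\pi m\gamma}^n X$ and applying that theorem with $\lambda = 2\pi m\gamma$, $\eta = t\gamma$ gives
$$
d_\teich\bigl(\gr_{t\gamma}(\gr_{2\pi m\gamma}X_n),\ \gr_{((1+2m)t + 2\pi m)\gamma}X_n\bigr) \le C\cdot l_{X_n}(\gamma)^{1/8},
$$
which is precisely the one-step estimate with $a_{n,m} = 1+2m$; combined with the geometric decay $l_{X_n}(\gamma)\le (1/3)^n l_X(\gamma)$ from Lemma \ref{deltabounding} this gives the theorem with $q = 3^{-1/8}$. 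You instead attempt to re-derive the one-step comparison from scratch by a direct cut-and-reglue argument, and this is where the gap lies.

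The problematic step is the claim that $c_{n+1,m}(t)$ and $c_{n,m}(2\pi m + a_{n,m}t)$ are ``the same surface $\widehat X_n$ with cylinders glued in the same way by the same natural parametrisation,'' so that matching the moduli by an affine stretch suffices. This is not true as stated: the geodesic representative $\gamma'$ of $\gamma$ on $X_{n+1}$ is \emph{not} the flat core curve $\delta$ of the grafting cylinder, and grafting along $\gamma'$ uses the constant-hyperbolic-speed parametrisation of $\gamma'$ in $X_{n+1}$, which is sheared and (a priori substantially) twisted relative to the natural flat parametrisation of $\delta$. Your argument tacitly identifies these two gluing maps; that identification is exactly what the paper spends Sections \ref{sec:bounding}--\ref{sec:twist} constructing — the pre-annulus maps, the shearing correction to make the parametrisations compatible (Lemma \ref{annulusmaps}), and the untwist map compensating the twist of the uniformizing map (Lemma \ref{twistbound} and the subsequent estimate). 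Your ``bounded $\delta_n$'' absorbs the modulus discrepancy but not the shear/twist of the boundary identification, and it is the untwist estimate that produces the $l^{1/8}$ exponent: the twist can be of order $\sqrt{\Mod(C^1)^2-\Mod(C^2)^2}\sim l^{-7/8}$, so the untwist dilatation is $\sim l^{1/8}$ and not $O(l)$. Thus your claimed bound $d_\teich\le C_1\ell_n$ is too strong; the correct exponent is $\ell_n^{1/8}$, which still gives exponential decay with $q=(1/(2m+1))^{1/8}$ but not with the $q$ you state. In short, the ``delicate point'' you yourself flag at the end — uniform control of the gluing near $\gamma'$ — is not a side remark but the entire content of Theorem \ref{quasiflow}, and without invoking it (or re-proving it) the one-step estimate is not established.
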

To understand the behaviour of the conformal grafting map on grafting rays, one needs to
understand how grafting behaves under iteration. 
To this end, note that grafting does not form a flow, i.e. $\gr_{t\lambda}\gr_{s\lambda} X$ is not the same as $\gr_{(t+s)\lambda}X$ -- even in the case where $\lambda$ is a single curve. 
An intuitive reason for this is given by the following
observation: To obtain $\gr_{t\gamma}\gr_{s\gamma}X$ from $\gr_{s\gamma}X$, one has to replace the
geodesic representative $\gamma'$ of $\gamma$ on $\gr_{s\gamma}X$ with a flat cylinder of length
$t$ -- to obtain $\gr_{(t+s)\gamma}X$ from $\gr_{s\gamma}X$ on the other hand, one has to make the
already inserted grafting cylinder longer (by $t$); for example by cutting at the flat core 
curve $\delta = \gamma \times \{s/2\}$ of the already glued in grafting cylinder and then pasting
in another flat cylinder $\gamma \times [0,t]$.
However, a priori the curves $\gamma'$ and $\delta$ may be very different and thus the two
surgery operations will give different results.
Also note that even if $\gamma'$ and $\delta$ were identical curves, the surgery operations would
not yield the same result, as grafting is defined in terms of the cylinder length -- and since grafting 
decreases the length of the grafting curves, $\gamma'$ will be shorter than $\gamma$; hence the 
modulus of a length $t$ cylinder at $\gamma'$ will be larger than the modulus of a length $t$ cylinder
at $\gamma$.

The following two theorems are the main technical results of this paper.
\begin{theorem}[Iterating a multicurve]
  \label{quasiflow}
  Let $S$ be a closed surface of genus $g > 1$. There are constants $\widetilde{\epsilon}, C > 0$ such that the following holds: 

  Let $\lambda = t_1\gamma_1 + \ldots + t_n\gamma_n$ be a weighted multicurve on $S$ and $\eta=s_1\gamma_1+\ldots+s_n\gamma_n$ be another 
  multicurve with the same supporting curves.
Let $X \in \teich(S)$ be a hyperbolic structure such that the hyperbolic lenghts satisfy $l_X(\gamma_i) \leq \widetilde{\epsilon}$ for all $i$. 
Then
  $$d_\teich\left(\gr_{\eta}(\gr_{\lambda} X), \gr_{\eta\widetilde{+}\lambda}X\right) \leq C\cdot\left(\max_{i=1,\ldots,n}l_X(\gamma_i)\right)^{1/8}$$
  where $\eta\widetilde{+}\lambda$ is a ``weighted sum'' of $\lambda$ and $\eta$:
  $$\eta\widetilde{+}\lambda = \left(\frac{\pi+t_1}{\pi}\cdot s_1+t_1\right)\gamma_1 + \ldots + \left(\frac{\pi+t_n}{\pi}\cdot s_n+t_n\right)\gamma_n$$
\end{theorem}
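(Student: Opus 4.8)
The plan is to reduce the statement to a local computation inside a single grafting cylinder, by exploiting the near-orthogonality of the thin parts: when all $l_X(\gamma_i) \le \widetilde\epsilon$, the collar lemma forces the geodesic representatives of the $\gamma_i$ after grafting to stay within long, pairwise-disjoint Margulis tubes, so grafting along $\lambda$ (and then along $\eta$) is, up to controlled error, a product of independent operations on the collars of the $\gamma_i$. Concretely, I would first record the geometry of a thin grafting cylinder: if $\gamma$ has hyperbolic length $\ell$ on $X$, then $\Gr_{t\gamma}X$ contains an embedded flat cylinder of circumference $\ell$ and height $t$, glued into the hyperbolic collar of $\gamma$; the resulting curve $\gamma'$ (the conformal geodesic of $\gamma$ on $\gr_{t\gamma}X$) has length $\approx \ell \cdot \pi/(\pi+t)$ — this is precisely the factor $(\pi+t)/\pi$ appearing in the statement, reflecting that a later grafting of weight $s$ at $\gamma'$ is "seen" by the original surface as a grafting of weight $\tfrac{\pi+t}{\pi}s$, plus the reinsertion term $+t$. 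I expect this to follow from the explicit description of grafting of thin curves via the projective development of the collar, together with the standard estimate (Scannell–Wolf, or the thin-part analysis in McMullen) comparing the hyperbolic and conformal geodesics.

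Second, I would make the "curves stay put" claim precise. After grafting along $\lambda$, each $\gamma_i$ has a well-defined geodesic representative $\gamma_i'$ on $\gr_\lambda X$; I want to show $\gamma_i'$ is contained in the inserted cylinder (or its immediate hyperbolic collar), and in particular is disjoint from all the other grafting loci. The point is that grafting along $t_j\gamma_j$ for $j\ne i$ does not change the conformal structure near $\gamma_i$, and grafting along $t_i\gamma_i$ produces a cylinder whose modulus is large (since $\ell$ is small), so $\gamma_i'$ is trapped in a definite neighbourhood of it. This lets me assert that the two surfaces $\gr_\eta(\gr_\lambda X)$ and $\gr_{\eta\widetilde+\lambda}X$ differ only by surgeries supported on disjoint subcylinders, one $\gamma_i$ at a time.

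Third, the quantitative comparison. For a fixed $i$, on $\gr_\lambda X$ the curve $\gamma_i'$ has length $\ell_i' \le \ell_i$; grafting it by weight $s_i$ inserts a cylinder of modulus $s_i/\ell_i'$, while on $X$ the "target" weight $\bigl(\tfrac{\pi+t_i}{\pi}s_i+t_i\bigr)$ inserts a cylinder of modulus $\bigl(\tfrac{\pi+t_i}{\pi}s_i+t_i\bigr)/\ell_i$. Using $\ell_i' = \ell_i\bigl(\tfrac{\pi}{\pi+t_i}\bigr)(1+O(\ell_i))$ from the first step, these two moduli agree up to a multiplicative error $1+O(\ell_i^{\alpha})$ for a suitable $\alpha>0$. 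The Teichmüller distance between two surfaces that differ only by replacing one embedded annulus with another of comparable modulus (everything else isometric) is bounded by the log of the ratio of moduli — hence by $O(\ell_i^{\alpha})$; here the exponent $1/8$ is the price of pushing the various $O(\ell)$ collar estimates through the quasiconformal comparison. Summing (really, taking the worst $i$, since the supports are disjoint the quasiconformal maps can be performed simultaneously) gives the bound $C\cdot(\max_i l_X(\gamma_i))^{1/8}$.

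The main obstacle I anticipate is the second step — controlling where the conformal geodesic $\gamma_i'$ actually sits on the grafted surface, uniformly over all weight vectors $\lambda$. One must rule out the possibility that for large $t_i$ the curve $\gamma_i'$ wanders far into the hyperbolic part or interacts with a short curve disjoint from $\lambda$; this is exactly the kind of phenomenon the hypothesis $l_X(\gamma_i)\le\widetilde\epsilon$ (and the disjointness of the $\gamma_i$) is there to prevent, via the collar lemma and the fact that grafting only shrinks the grafted curves. I would handle this by working on the universal cover, developing the chain of collars into $\mathbb{CP}^1$, and showing the developing image of a neighbourhood of $\gamma_i$ is a "lune plus Euclidean strip" whose conformal core is forced to lie in the strip. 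Once that localization is secured, everything else is the routine modulus bookkeeping sketched above.
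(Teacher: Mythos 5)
Your overall strategy — localize at each $\gamma_i$, use the collar lemma to trap $\gamma_i'$ near the grafting cylinder, compare moduli, and assemble a quasiconformal comparison map — is the same shape as the paper's argument. The length estimate $l_{X'}(\gamma_i')\approx \tfrac{\pi}{\pi+t_i}\,l_X(\gamma_i)$ and its role in producing the weight $\tfrac{\pi+t_i}{\pi}s_i+t_i$ is also exactly what the paper does (via an auxiliary multicurve $\Gamma$ whose weights are $s_i\cdot l_X(\gamma_i)/l_{X'}(\gamma_i)+t_i$, later rescaled to $\eta\widetilde{+}\lambda$). But there is a genuine gap at your step three, and it is the hard part of the theorem.

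The two surfaces you want to compare are not obtained from a common surface by swapping an embedded annulus for another of slightly different modulus. They are obtained from $X'=\gr_\lambda X$ by cutting along \emph{different} curves: $\gr_\eta X'$ cuts along the hyperbolic geodesics $\gamma_i'$, while $\gr_{\eta\widetilde{+}\lambda}X$ cuts along the flat core curves $\delta_i$ of the already-inserted grafting cylinders. Bounding the Teichm\"uller distance therefore requires an actual marking-preserving quasiconformal homeomorphism of closed surfaces, not just a modulus inequality; and building it forces you to (a) construct a map $X'\setminus\gamma_i'\to X'\setminus\delta_i$ supported in the collar, (b) match the boundary parametrizations so that the inserted cylinders glue compatibly (the paper calls these ``natural parametrizations'' and modifies the pre-annulus maps by a shear to arrange it), and (c) compensate a twist. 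Point (c) is where your modulus bookkeeping fails silently: the uniformizing map of the annulus bounded by $\alpha^1$ and $\delta$ can introduce a twist of up to $\sqrt{\Mod(C^1)^2-\Mod(C^2)^2}$ (Lemma \ref{twistbound}), and since those moduli are of order $1/l$ this twist can be large. Undoing it with a quasiconformal twist map (Proposition \ref{twistmaps}) costs $\log K\lesssim l^{1/8}$, and this single term dominates all the other errors, which are $O(l^{1/4})$. Your sketch implicitly produces an $O(l^{1/4})$ bound from the scaling/shearing comparison and then waves at ``the price of pushing the collar estimates through,'' but without isolating the twist you have no mechanism for the $1/8$ and, more importantly, no complete comparison map at all. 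To repair the proposal you would need to add the analogues of the paper's Lemma \ref{preannulus} (pre-annulus maps), Lemma \ref{annulusmaps} (matching natural parametrizations), and especially Lemma \ref{twistbound} together with the untwist estimate, and only then run the final rescaling argument that trades $\Gamma$ for $\eta\widetilde{+}\lambda$.
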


\begin{theorem}[Splitting a multicurve]
  \label{quasicomm}
  Let $S$ be a closed surface of genus $g > 1$. There are constants $\widetilde{\epsilon}, C > 0$ such that the following holds: 

  Let $\lambda = t_1\gamma_1 + \ldots + t_n\gamma_n$ and $\eta = t_{n+1}\gamma_{n+1} + \ldots t_m\gamma_m$ be disjoint weighted multicurves on $S$. Let $X \in \teich(S)$ be a hyperbolic structure such that $l_X(\gamma_i) \leq \widetilde{\epsilon}$ for all $i=1,\ldots,m$. 

Then
  $$d_\teich\left(\gr_{\eta}(\gr_{\lambda} X), \gr_{\eta + \lambda}X\right) \leq C\cdot\left(\max_{i=1,\ldots,m}l_X(\gamma_i)\right)^{1/8}$$
\end{theorem}
Both theorems are proved by explicitly constructing a quasiconformal comparison map and estimating its dilatation.

After reviewing some basic facts from hyperbolic geometry (section 2) we define the building blocks for these
maps in section 3 and develop formulas to estimate their dilatation.
Section 4 is devoted to showing the main technical results (theorems 1.4 and 1.5 above). These proofs are divided into several steps which are outlined and explained in section 4.1.
In section 5 we then study holonomy lifts and obtain theorems 1.1 to 1.3.
As a last application, we study the asymptotic behaviour of grafting sequences $\gr^n_{\lambda}X$ in section 6.
We show that these sequence converge geometrically to a punctured surface for every base point $X$.

\vspace{2 mm} 
\textbf{ACKNOWLEDGEMENTS.} The author would like to thank his advisor Ursula Hamenstädt
for her considerable support throughout the project and David Dumas for interesting and helpful
discussions. Most of this work was done during a visit at the MSRI in Berkeley in fall 2007.
The author would like to thank the institute for its hospitality and the organizers of the 
semester programme on Teichmüller theory and Kleinian groups. He would also like to thank the Hausdorff Center for Mathematics in Bonn for its financial support that made the stay in Berkeley possible.

\section{Some hyperbolic geometry}
\label{sec:notation}
For convenience we recall some facts from elementary hyperbolic geometry which we will need in the sequel. In this paper we will always use the upper half plane model for the hyperbolic plane $\HH^2$.
\begin{figure}[htbp!]
  \centering
  \includegraphics[width=0.6\textwidth]{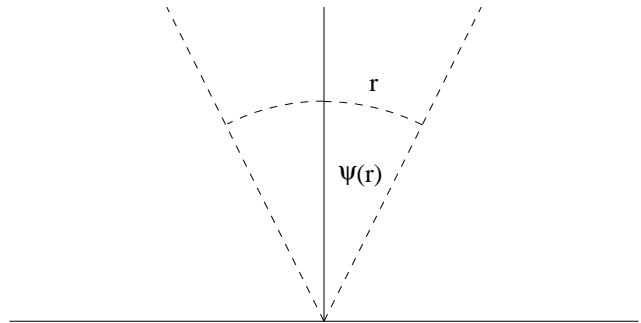}
  \caption{Regular neighbourhoods in $\HH^2$}
  \label{fig:rg}
\end{figure}

The hyperbolic regular $r$-neighbourhood of the imaginary axis $\{z\in\HH^2, d(z, i\RR)<r\}$  is an infinite circle sector bounded by two straight euclidean rays through the origin. We will call the angle between these rays and the imaginary axis the \textit{angle} $\psi(r)$ \textit{corresponding to the neighbourhood} (cf. figure \ref{fig:rg})

Similarly, if $A=\{z \in X, d(z, \gamma)<r\}$ is a embedded annulus around a simple closed geodesic $\gamma$ on a hyperbolic surface $X$, it can be lifted to a regular $r$-neighbourhood of the imaginary axis in $\HH^2$. 
We call the angle correponding to this lifted neighbourhood the \textit{angle corresponding to the annulus} $A$.

From elementary hyperbolic geometry we know
$$\psi(r) = \arctan\left(\frac{e^{2r}-1}{2e^r} \right).$$
We will often need a simple estimate for small $r$, namely
\begin{prop}[Estimate for annulus angles]
\label{psi_u}
  If $r$ is small enough, we have
  $$\psi(r) \leq r$$
\end{prop}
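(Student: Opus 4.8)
The plan is to first simplify the expression for $\psi$ and then reduce the claim to an elementary comparison of elementary functions. Observe that
$$\frac{e^{2r}-1}{2e^r} = \frac{e^r - e^{-r}}{2} = \sinh r,$$
so $\psi(r) = \arctan(\sinh r)$. For $0 < r < \pi/2$ both $\arctan$ (on $\RR$) and $\tan$ (on $(-\pi/2,\pi/2)$) are strictly increasing and mutually inverse, so the inequality $\psi(r) \le r$ is equivalent to $\sinh r \le \tan r$.

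It then remains to verify $\sinh r \le \tan r$ for all sufficiently small $r>0$. I would do this by examining the difference $h(r) = \tan r - \sinh r$. Since $h(0)=0$, it suffices to show $h'(r) = \sec^2 r - \cosh r \ge 0$ near $0$. From $\tan r \ge r$ one gets $\sec^2 r = 1 + \tan^2 r \ge 1 + r^2$, while the power series of $\cosh$ gives $\cosh r = 1 + r^2/2 + O(r^4) \le 1 + r^2$ once $r$ is small; hence $h'(r) \ge 0$ and therefore $h(r)\ge 0$ on a neighbourhood of $0$. Applying the monotone function $\arctan$ to $\sinh r \le \tan r$ then yields $\psi(r) = \arctan(\sinh r) \le r$. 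Alternatively one can simply compare Taylor expansions, noting $\tan r - \sinh r = r^3/6 + O(r^5) > 0$ for small $r$.

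There is essentially no serious obstacle here; the only point to watch is that the two "obvious" estimates $\sinh r \ge r$ and $\tan r \ge r$ both point in the wrong direction to be used directly, so one genuinely has to compare $\sinh r$ against $\tan r$ rather than against $r$. In fact the argument above gives the stronger statement $\psi(r) < r$ for every $r \in (0,\pi/2)$, not just for small $r$; but for the applications in the paper the stated local version suffices.
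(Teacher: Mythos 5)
Your proof is correct and follows essentially the same line as the paper: after reducing to the comparison $\frac{e^{2r}-1}{2e^r} \leq \tan r$ (which the paper does without noting that the left side is $\sinh r$, a nice simplification on your part), both proofs establish the inequality by an elementary calculus comparison near $r=0$ — the paper by matching Taylor coefficients up to order two and comparing the third derivatives, you by showing $h'(r)=\sec^2 r - \cosh r \geq 0$ via the intermediate bound $1+r^2$. Your observation that the inequality in fact holds on all of $(0,\pi/2)$ is correct as well, though not needed.
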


\begin{proof}
  As $\tan$ is increasing, it is enough to show that
  $$\frac{e^{2r}-1}{2e^r} \leq \tan(r)$$
  for small $r$. Taking derivatives we see that both sides agree up to order 2 at $r=0$. However, the third derivative at $r=0$ is 1 for the left hand side and 2 for the right hand side. Thus the inequality holds for small $r$.
\end{proof}

Recall that there is a function $M:\RR^+\to\RR^+$ such that if $\gamma$ is a simple closed geodesic of length $\leq l$ on any hyperbolic surface $X$, the regular $M(l)$-neighbourhood of $\gamma$ is an embedded annulus (this is the classical collar lemma, cf. \cite{Buser:oq})
We call this annulus the \textit{standard hyperbolic collar} and denote the corresponding angle by $\theta(l)$. A calculation yields
$$\theta(l) = \arccos\left( \frac{e^l-1}{e^l+1}\right).$$
In the sequel we will often have to estimate this quantity, in particular we need
\begin{prop}[Estimate for standard collars]
\label{theta_u}
  For small $l$ we have
  $$\frac{\pi - l}{2} \leq \theta(l).$$
\end{prop}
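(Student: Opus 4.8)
The plan is to follow the same strategy as in the proof of Proposition~\ref{psi_u}: rewrite $\theta(l)$ in a more transparent form and then reduce the claim to an elementary one-variable inequality that can be verified by comparing low-order Taylor coefficients at $0$.

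First I would note that $\frac{e^l-1}{e^l+1} = \tanh(l/2)$, so that $\theta(l) = \arccos\left(\tanh(l/2)\right)$. Since $\arccos$ is strictly decreasing on $[-1,1]$, and since for $0 < l < \pi$ both $\theta(l)$ and $\frac{\pi-l}{2}$ lie in $[0,\pi/2]$ --- the interval on which $\cos$ restricts to a decreasing bijection onto $[0,1]$ --- the inequality $\theta(l) \geq \frac{\pi-l}{2}$ is equivalent to
$$\tanh\left(\frac{l}{2}\right) \leq \cos\left(\frac{\pi-l}{2}\right) = \sin\left(\frac{l}{2}\right).$$
Setting $x = l/2$, it thus suffices to prove that $\tanh x \leq \sin x$ for all sufficiently small $x > 0$.

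For this last inequality I would argue exactly as in the proof of Proposition~\ref{psi_u}. The functions $\sin x$ and $\tanh x$ agree, together with their first and second derivatives, at $x = 0$; the third derivatives, however, differ: $\frac{d^3}{dx^3}\sin x\big|_{x=0} = -1$ while $\frac{d^3}{dx^3}\tanh x\big|_{x=0} = -2$. Hence $\sin x - \tanh x = \frac{1}{6}x^3 + O(x^5)$ is strictly positive for small $x > 0$, which is what we wanted.

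There is essentially no obstacle in this argument; the only two points that need a little care are that applying the monotone decreasing function $\arccos$ reverses the direction of the inequality, and that one must take $l$ small enough (in particular $l < \pi$) that all the angles involved remain in the principal range $[0,\pi/2]$, where the reduction via $\cos$ is valid.
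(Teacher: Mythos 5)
Your proof is correct and follows essentially the same strategy as the paper: reduce via the monotonicity of $\cos$ to a scalar inequality and compare third-order Taylor coefficients at $0$. Your rewriting via $\tanh(l/2)$ and $\sin(l/2)$ and the substitution $x = l/2$ makes the derivative computation cleaner (yielding $-1$ vs.\ $-2$ where the paper gets $-1/8$ vs.\ $-1/4$), and your remark on keeping the angles in $[0,\pi/2]$ is a small but welcome bit of extra care, but these are presentational rather than substantive differences.
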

\begin{proof}
  As $\cos$ is decreasing for small arguments, it suffices to prove
  $$\cos\left(\frac{\pi-l}{2}\right) \geq \frac{e^l-1}{e^l+1}$$
  for small $l$.

  Taking derivatives we see that both sides agree up to order 2 at $l=0$. The third derivative of the left hand side is $-1/8$, while that of the right hand side is $-1/4$. Thus the right hand side decreases faster and the inequality holds for small $l$ as claimed.
\end{proof}

\section{Scaling, Shearing and Twisting maps}
\label{sec:sst}
The quasiconformal maps used in the proof of theorems \ref{quasiflow} and \ref{quasicomm} will be constucted out of simple building blocks, which we now describe.

A \textit{finite annulus} in the complex plane is a open domain $A$ bounded by two nonintersecting Jordan curves. The domain 
$\{z \in \CC, r<|z|<s\}$
bounded by two round circles is called a \textit{round annulus}. 
By the uniformization theorem, any finite annulus $A$ is biholomorphic to a round annulus. 
An uniformizing map extends to a homeomorphism of the closed annuli (cf. for example \cite[chapter I §2.2]{Lehto:1965fj}).
Thus, if $A$ and $B$ are finite annuli which are biholomorphic to round annuli $A'$ and $B'$, any $K$-quasiconformal mapping of the closures $\overline{A'} \to \overline{B'}$ gives rise to a quasiconformal mapping $\overline{A} \to \overline{B}$ (of the same dilatation) and vice versa.

Recall that the modulus $\Mod(A)$ of an annulus $A \subset \CC$ is the extremal length of the ``topological radii'' -- that is, the familiy of curves connecting the two boundary curves.
The modulus yields a complete classification of finite annuli: $A, B$ are biholomorphic if and only if they have the same modulus.
For a round annulus in the complex plane we have
$$\Mod\left(\{a<|z|<b\}\right) = \frac{1}{2\pi}\log\left(\frac{b}{a}\right).$$
Also recall the formula $\Mod(A) = \pi/l$, where $l$ is length of the simple closed geodesic with respect to the complete hyperbolic metric on $A$.

\subsection{Scaling}
Suppose $A, B \subset \CC$ are two round annuli. 
The problem of finding the optimal quasiconformal map $A\to B$ is classical. We want to describe its solution, which we call the \textit{scaling map} $s_{A,B}$.

To do so, it is useful to introduce logarithmic coordinates for round annuli. Consider the holomorphic map
$$f_a: (0,a)\times\RR \to \CC, \quad\quad (t,x) \mapsto e^{t+2\pi i \cdot x}$$
This map is a holomorphic universal covering map of the annulus $A = \{1<|z|<e^a\}$. A fundamental domain is of the form $(0,a)\times[0,1]$. We will call the induced coordinates on the annulus $A$ \textit{logarithmic coordinates for $A$}. Note that $a = \Mod(A)$ and that the $x$ is nothing but the argument of $f_a(t,x) \in \CC$. Also note that these coordinates extend to give coordinates of the closed annulus.

In logarithmic coordinates, the scaling map is given by
$$(0,b) \times \RR \to (0,a) \times \RR, \quad\quad (t,x) \mapsto \left(\frac{a}{b}t, x\right)$$
Clearly, this map has quasiconformality constant $\max(a,b)/\min(a,b)$ and is thus optimal (due to the geometric classification of quasiconformal maps).

\subsection{Shearing}
Now let $A$ be a round annulus in the complex plane. We want to construct a quasiconformal self-map of the closure $\overline{A}$ of $A$ realizing a given angular distortion on the outer (or inner) boundary circle, while fixing the other boundary. More precisely

\begin{prop}[Shearing maps]
\label{shearing}
  Suppose $A \subset \CC$ is a round annulus of modulus $a>1$. Let $f:[0,1]\to[0,1]$ be a $B$-bilipschitz, increasing continuously differentiable map with $B<2, f(0) = 0, f(1)=1$. 

Then there is a quasiconformal homeomorphism $S_f:\overline{A} \to \overline{A}$ satisfying
  \begin{enumerate}[i)]
  \item $S_f$ fixes the inner boundary: $S_f(0,x) = (0,x)$ (in logarithmic coordinates)
  \item $S_f$ realizes the distortion $f$ on the outer boundary: $S_f(a,x) = (a,f(x))$
  \item 
The quasiconformality constant of $S_f$ satisfies
    $$\log(K(S_f)) \leq C\cdot(B-1)$$
    for some universal constant $C$.
  \end{enumerate}
 The same result holds by symmetry if we reverse the roles of inner and outer boundary. 
\end{prop}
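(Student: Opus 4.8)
The plan is to carry out the construction in the logarithmic coordinates introduced above, where $\overline A$ is the strip $[0,a]\times\RR$ with $x$ read modulo $1$ and carrying the flat metric; the logarithmic coordinate chart is conformal, so this changes no dilatations. First extend $f$ to an increasing homeomorphism of $\RR$ by $f(x+1)=f(x)+1$ — this is consistent since $f(0)=0$, $f(1)=1$, and the extension is again $B$-bilipschitz. Then set
$$S_f(t,x)=\left(t,\ \tfrac{t}{a}\,f(x)+\bigl(1-\tfrac{t}{a}\bigr)x\right),$$
the ``linear interpolation'' between the identity on the inner boundary and $f$ on the outer one. For each fixed $t\in[0,a]$ the second coordinate is a convex combination of the increasing homeomorphisms $f$ and $\id$ of $\RR$, both commuting with $x\mapsto x+1$, hence itself an increasing homeomorphism commuting with $x\mapsto x+1$; so $S_f$ descends to a self-homeomorphism of $\overline A$, which is Lipschitz because $t/a\in[0,1]$ and $f$ is Lipschitz. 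Properties i) and ii) are obtained by putting $t=0$ and $t=a$.

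It remains to bound the dilatation. The derivative of $S_f$ (which exists a.e., by Rademacher's theorem) is the lower triangular matrix
$$J=\begin{pmatrix}1&0\\[1mm]\tfrac1a\bigl(f(x)-x\bigr)&1+\tfrac{t}{a}\bigl(f'(x)-1\bigr)\end{pmatrix}=:\begin{pmatrix}1&0\\ b&c\end{pmatrix}.$$
Since $f$ is $B$-bilipschitz we have $f'\in[1/B,B]$ and hence $|f'-1|\le B-1$; integrating from $0$ gives $|f(x)-x|\le B-1$ on $[0,1]$, so $|b|\le (B-1)/a\le B-1$ using $a>1$, while $t/a\in[0,1]$ gives $c\ge 1/B>0$ and $|1-c|\le B-1$. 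For an orientation preserving $2\times2$ matrix the dilatation $K$ obeys $K+K^{-1}=(\text{sum of the squares of the entries})/\det$, so here
$$K(J)+K(J)^{-1}-2=\frac{1+b^2+c^2}{c}-2=\frac{(1-c)^2+b^2}{c}\ \le\ \frac{2(B-1)^2}{1/B}=2B(B-1)^2\ \le\ 4(B-1)^2,$$
the last inequality using $B<2$. Writing $K(J)=e^s$ and using $\cosh s\ge1+s^2/2$, this gives $s\le 2(B-1)$. Since $S_f$ is a Lipschitz homeomorphism it is ACL, hence quasiconformal with $K(S_f)$ equal to the essential supremum of the pointwise dilatation of $J$; thus $\log K(S_f)\le 2(B-1)$, which is iii). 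The version with the roles of the inner and outer boundary interchanged follows by conjugating $S_f$ with the conformal involution $(t,x)\mapsto(a-t,x)$ of $\overline A$.

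I expect no serious obstacle here; the one point that genuinely needs the computation rather than a soft argument is that the rate is \emph{linear} in $B-1$ (and not, say, of order $\sqrt{B-1}$). That is exactly why one must expand $(\text{sum of squares of entries})-2\det J=(1-c)^2+b^2$: this combination is quadratically small in $B-1$, and the hypotheses $a>1$ and $B<2$ are precisely what let the factors $1/a$ and $B$ be absorbed into a universal constant. The remaining points — that $S_f$ is well defined on the quotient annulus, and that a Lipschitz homeomorphism is quasiconformal with dilatation read off from its a.e.\ derivative — are standard.
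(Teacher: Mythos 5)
Your proof is correct and follows essentially the same route as the paper: the map $S_f$ is the identical linear interpolation between $\mathrm{id}$ and $f$ in logarithmic coordinates, and both arguments rest on the same estimates $|f'-1|\le B-1$ and $|f(x)-x|\le B-1$, with $a>1$ and $t/a\le1$ used to absorb the factors of $a$. The only real difference is the bookkeeping for the pointwise dilatation: the paper computes Wirtinger derivatives and passes through $K=(1+k)/(1-k)$, whereas you use the real Jacobian and the identity $K+K^{-1}=\|J\|_F^2/\det J$; your version is marginally cleaner, as the paper's closed form $k=\sqrt2(B-1)/(3-B)$ blows up already at $B=2\sqrt2-1\approx1.83$, while your bound $\log K\le 2(B-1)$ holds uniformly on the full stated range $B<2$.
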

\begin{proof}
  We define the map on a fundamental domain $[0,a]\times[0,1]$ in logarithmic coordinates for (the closure of) $A$:
  $$S_f: (t,x) \mapsto \left(t, \left(1-\frac{t}{a}\right)x + \frac{t}{a}f(x)\right)$$
  and continue cyclically.

  First we note that $S_f$ actually is a homeomorphism: $S_f$ is differentiable with linearly independent partial derivatives (see below), so it is locally a homeomorphism; furthermore it is bijective, as 
$ \left(1-\frac{t}{a}\right)x + \frac{t}{a}f(x)$ is strictly increasing in $x$ for each fixed $t$ -- and thus bijective.

  To estimate the quasiconformality constants, we compute
  $$\frac{d}{dt}S_f = \left(1, \frac{f(x)-x}{a} \right)$$
  $$\frac{d}{dx}S_f = \left(0, \left(1-\frac{t}{a}\right) + \frac{t}{a}f'(x)\right)$$
  Therefore ($\partial, \overline{\partial}$ denote Wirtinger derivatives)
  $$\partial S_f = \frac{1}{2}\left(1+1-\frac{t}{a}+\frac{t}{a}f'(x), \frac{f(x)-x}{a} \right)
= \frac{1}{2}\left(2-\frac{1-f'(x)}{a}t, \frac{f(x)-x}{a} \right)$$
$$\overline{\partial}S_f = \frac{1}{2}\left(1-1+\frac{t}{a}-\frac{t}{a}f'(x), \frac{f(x)-x}{a} \right)
= \frac{1}{2}\left(\frac{1-f'(x)}{a}t, \frac{f(x)-x}{a} \right)$$
Now we need to estimate $\frac{|\overline{\partial}S_f|}{|\partial S_f|}$. To this end, note that as $f$ is $B$-bilipschitz and monotonically increasing, we have $B^{-1} \leq f'(x) \leq B$ for all $x$. Thus, writing $B=1+\epsilon$,
$$1-f'(x) \leq 1-B^{-1} = 1 - \frac{1}{1+\epsilon} = \frac{\epsilon}{1+\epsilon} \leq B-1$$
$$f'(x) - 1 \leq B-1$$
and thus, $|1-f'(x)| \leq B-1$ and 
$$|f(x)-x| \leq (B-1)|x|$$
Using this we obtain (recall that $a > 1$)
$$\frac{|\overline{\partial}S_f|^2}{|\partial S_f|^2} = 
\frac{ (1-f'(x))^2\frac{t^2}{a^2} + \frac{(f(x)-x)^2}{a^2} }{ (2-(1-f'(x))\frac{t}{a})^2 + \frac{(f(x)-x)^2}{a^2}  }
\leq \frac{ (B-1)^2 + (B-1)^2  }{ (2-(B-1))^2 }$$
This shows, that the map has an (analytic) quasiconformality constant of less than
$$k = \sqrt{2}\frac{B-1}{3-B}$$
From this, we obtain the geometric quasiconformality constant as $K=\frac{1+k}{1-k}$. Thus we have
$$K = \frac{ 3-B + \sqrt{2}(B-1)  }{ 3-B - \sqrt{2}(B-1) } = 1 + \frac{2\sqrt{2}B-2\sqrt{2} }{(3+\sqrt{2}) - (1+\sqrt{2})B}$$
Using $\log(1+y) \leq y$ this yields the claim.
\end{proof}

Conversely, we need a way to estimate the shearing introduced by univalent maps of annuli. Let
$$A_s = \{ z \in \CC, s<|z|<1 \}$$
denote a round annulus in the complex plane. 
\begin{lemma}[Controlling boundary distortion]
\label{distortion}
  Suppose $f: A_r \to A_s$ is a univalent holomorphic map preserving the outer boundary ($f(S^1) = S^1$). 

  Then $f|_{S^1}$ is $K$-Lipschitz with respect to the angular metric on $S^1$, where $K=\Mod(A_s)/\Mod(A_r)$.
\end{lemma}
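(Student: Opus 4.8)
The plan is to pass to logarithmic coordinates, where the round annuli become flat strips, to extend $f$ across the preserved boundary by Schwarz reflection, and then to apply the Schwarz--Pick lemma for the hyperbolic metrics of the resulting \emph{doubled} strips, evaluated along the reflection line. In logarithmic coordinates (Section 3.1) the annulus $A_r$ is the quotient of the strip $\Sigma_r = \{\zeta \in \CC : 0 < \Im\zeta < M_r\}$, $M_r := \Mod(A_r)$, by $\zeta \mapsto \zeta + 1$, with $\{\Im\zeta = 0\}$ corresponding to the outer circle $S^1$ and $\Re\zeta$ to the angular coordinate (so the angular metric on $S^1$ is $|d\Re\zeta|$ up to the factor $2\pi$); likewise $A_s$ is a quotient of $\Sigma_s$ with $M_s := \Mod(A_s)$. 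Lifting $f$ to a holomorphic $\tilde f : \Sigma_r \to \Sigma_s$ commuting with $\zeta \mapsto \zeta+1$, the hypothesis $f(S^1)=S^1$ (with its boundary orientation, since $f$ is univalent) says that $\tilde f$ is continuous up to $\{\Im\zeta=0\}$ and sends it to $\{\Im\zeta=0\}$; thus $\phi := \tilde f|_{\RR}$ is a real-analytic function with $\phi(x+1)=\phi(x)+1$ and $\phi'>0$, and the statement to prove is exactly $\phi'(x) \le M_s/M_r$ for all $x$.

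First I would invoke the Schwarz reflection principle: since $\tilde f$ is real on the analytic arc $\{\Im\zeta=0\}$, setting $\tilde f(\bar\zeta) = \overline{\tilde f(\zeta)}$ extends $\tilde f$ holomorphically to a map of the doubled strips $\Sigma_r^* = \{|\Im\zeta| < M_r\} \to \Sigma_s^* = \{|\Im\zeta| < M_s\}$. The decisive observation is that the hyperbolic density of $\{|\Im\zeta| < M\}$, obtained by pulling back $|dw|/\Im w$ along the conformal map $\zeta \mapsto e^{\pi(\zeta + iM)/(2M)}$ onto $\HH^2$, equals $\pi\bigl/\bigl(2M\cos(\pi\Im\zeta/(2M))\bigr)$, which on the reflection axis $\{\Im\zeta = 0\}$ takes the \emph{finite} value $\pi/(2M)$. (By contrast, the density of $\Sigma_r$ itself blows up at $\{\Im\zeta=0\}$, so a direct application of Schwarz--Pick to $\tilde f : \Sigma_r \to \Sigma_s$ is vacuous there; doubling across the preserved boundary is precisely what regularizes the estimate.) Applying $\rho_{\Sigma_s^*}(\tilde f(\zeta))\,|\tilde f'(\zeta)| \le \rho_{\Sigma_r^*}(\zeta)$ at a real point $\zeta = x$, where $\tilde f(x)=\phi(x)$ is again real, gives $\tfrac{\pi}{2M_s}\,\phi'(x) \le \tfrac{\pi}{2M_r}$, i.e. $\phi'(x) \le M_s/M_r = \Mod(A_s)/\Mod(A_r)$. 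Translating back through the logarithmic coordinates, $f|_{S^1}$ is $K$-Lipschitz in the angular metric with $K = \Mod(A_s)/\Mod(A_r)$.

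The only delicate point is the regularity required for the reflection step: one must know that a univalent $f$ with $f(S^1)=S^1$ actually extends continuously to a one-sided neighbourhood of $S^1$ in $\overline{A_r}$. This is the boundary-correspondence theorem for conformal maps --- the same input already used in Section 3 for uniformizing maps of annuli --- applied to a thin collar $\{t<|z|<1\}$ of $S^1$ inside $A_r$, whose image is a Jordan subdomain of $A_s$ bounded in part by the analytic arc $S^1$. Once this is in hand, the remainder is the explicit computation of the strip metric together with a single application of Schwarz--Pick, so I expect this boundary-regularity bookkeeping, rather than any genuine estimate, to be the main thing to pin down carefully.
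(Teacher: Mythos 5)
Your proof is correct and is essentially the same argument as the paper's: both Schwarz-reflect across the preserved boundary circle and then invoke hyperbolic contraction (Schwarz–Pick) in a uniformizing coordinate for the doubled domain, the only difference being that you read off the hyperbolic density directly in the strip model, while the paper lifts the reflected annulus $A_r^+$ to $\HH^2$ and computes $|\pi_r'(i)|$ explicitly — two presentations of the same calculation. Your closing remark about the need for continuous boundary extension is a reasonable piece of bookkeeping that the paper leaves implicit in its appeal to the reflection principle.
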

\begin{proof}
  Using the Schwarz reflection principle we first extend $f$ to a holomorphic map
$$ F : A_r^+ \to  A_s^+$$
 where $A_r^+ = \{ z \in \CC, r<|z|<r^{-1} \}$.

  Now it suffices to show that $F'|_{S^1} \leq K$ -- indeed (by precomposiong with a rotation) we only need to show it for $F'(1)$. Furthermore we can assume that $F(1) = 1$ (by postcomposing with a rotation).

  The universal covering map for $A_r^+$, $\pi_r(z):\HH^2 \to A_r^+$ is given by
  $$\pi_r(z) = \exp\left(\log(-i\cdot z)\frac{2\pi}{l}i\right) = \exp\left(\log(-i\cdot z)\cdot2\Mod(A_r^+)\cdot i\right)$$
  $$\pi_r'(z) = \exp\left(\log(-i\cdot z)\cdot2\Mod(A_r^+)\cdot i\right)\cdot\left(-i\frac{1}{-iz}\cdot 2\Mod(A_r^+)\right)$$  
 where $\log$ is any branch of the natural logarithm on $\HH^2$ and $l$ the hyperbolic length of the core curve of $A_r^+$. 
 
  Lift $F$ to a map $\widetilde{F}:\HH^2 \to \HH^2$ of the universal covers fixing $i$: $\widetilde{F}(i) = i$.
  As the universal covering map is locally biholomorphic, we can compute the derivative of $F$ as
  $$F'(1) = (\pi_s)'(i)\widetilde{F}'(i)(\pi_r')^{-1}(1).$$
  However, by the usual Schwarz lemma, we have $|\widetilde{F}'(i)| \leq 1$, and thus
  $$|F'(1)| \leq |\pi_s'(i)|\cdot |\pi_r'(i)|^{-1}.$$
  But, 
  $$|\pi_r'(i)| = 2\Mod(A_r^+) = 4\Mod(A_r)$$
  and thus the lemma follows.
\end{proof}

\subsection{Twisting}
Again, let $A$ be some round annulus in the complex plane. We want to find a quasiconformal model for a twist on $A$.
\begin{prop}[Twist maps]
  \label{twistmaps}
  Suppose $A$ is a round annulus of modulus $a$ and let $k \in \RR$ (the amount of twisting) be given. Then there is a map $T_k:\overline{A} \to \overline{A}$ such that
  \begin{itemize}
  \item $T_k$ fixes the inner boundary: $T_k(0,x) = (0,x)$ (in logarithmic coordinates)
  \item $T_k$ realizes a twist by $k$: $T_k(a,x) = (a,x+k)$
  \item The quasiconformality constant of $T_k$ satisfies
    $$\log(K(T_k)) \leq \frac{2}{\sqrt{1+4\left(\frac{a}{k}\right)^2}-1}$$
  \end{itemize}
\end{prop}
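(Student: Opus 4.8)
The plan is to construct $T_k$ by exactly the recipe used for the shearing map of Proposition~\ref{shearing}, only with a \emph{linear} distortion of the second logarithmic coordinate instead of a general bilipschitz one; this makes the whole construction completely explicit. Passing to logarithmic coordinates for $A$ and working on a fundamental domain $[0,a]\times[0,1]$, I would define
$$T_k:(t,x)\longmapsto\left(t,\ x+\frac{t}{a}\,k\right),$$
continued cyclically in $x$. Since the amount of translation $\frac{t}{a}k$ depends only on $t$, this descends to the cylinder and so gives a map $\overline{A}\to\overline{A}$; in logarithmic coordinates it is an affine shear, hence a diffeomorphism and in particular a homeomorphism of $\overline{A}$. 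The two boundary conditions are immediate: $T_k(0,x)=(0,x)$ and $T_k(a,x)=(a,x+k)$. Swapping the roles of $t=0$ and $t=a$ produces the analogous map fixing the outer boundary.

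For the dilatation, the crucial simplification is that $T_k$ is affine, so its Beltrami coefficient is constant on $A$. Writing $w=t+i\!\left(x+\frac{t}{a}k\right)$ and computing the Wirtinger derivatives exactly as in the proof of Proposition~\ref{shearing}, one obtains
$$\partial T_k=\frac{1}{2}\left(2+i\frac{k}{a}\right),\qquad \overline{\partial}T_k=\frac{1}{2}\,i\frac{k}{a},$$
so that the analytic dilatation is the constant
$$\kappa:=\frac{|\overline{\partial}T_k|}{|\partial T_k|}=\frac{|k|/a}{\sqrt{4+(k/a)^2}}=\frac{|k|}{\sqrt{4a^2+k^2}},$$
and the geometric quasiconformality constant is $K(T_k)=\frac{1+\kappa}{1-\kappa}$.

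It remains to recast this bound in the stated form. The key observation is the identity $\sqrt{1+4(a/k)^2}=1/\kappa$, which turns the right-hand side of the claimed inequality into $\frac{2}{1/\kappa-1}=\frac{2\kappa}{1-\kappa}$; so the proposition reduces to the elementary estimate
$$\log\frac{1+\kappa}{1-\kappa}\le\frac{2\kappa}{1-\kappa}\qquad(0\le\kappa<1),$$
which follows, for instance, by adding $\log(1+\kappa)\le\kappa$ to $-\log(1-\kappa)=\int_0^{\kappa}(1-t)^{-1}\,dt\le\frac{\kappa}{1-\kappa}$ and using $\kappa\le\frac{\kappa}{1-\kappa}$; the degenerate case $k=0$ is $T_0=\id$, with both sides equal to $0$. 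I do not expect a genuine obstacle in this argument: because the distortion is linear the dilatation is honestly constant, so --- unlike in Proposition~\ref{shearing} --- there is no variable derivative that must be controlled. The only points that need a sentence of care are checking that $T_k$ is well defined on the cylinder (not just on the fundamental domain) and the interpretation of the right-hand side when $k=0$.
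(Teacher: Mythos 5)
Your proposal is correct and matches the paper's proof essentially verbatim: you use the same affine twist $T_k(t,x)=(t,x+\tfrac{t}{a}k)$ in logarithmic coordinates, the same Wirtinger-derivative computation (the paper writes $\partial T_k$, $\overline{\partial}T_k$ as real pairs, you as complex numbers, but they are the same), and the same constant dilatation $\kappa=|k|/\sqrt{4a^2+k^2}$. The only cosmetic difference is at the end: the paper simply rewrites $K=1+\tfrac{2}{\sqrt{1+4(a/k)^2}-1}$ and applies $\log(1+y)\le y$ in one line, whereas you prove the equivalent inequality $\log\tfrac{1+\kappa}{1-\kappa}\le\tfrac{2\kappa}{1-\kappa}$ via two separate estimates, which is correct but slightly more elaborate than needed.
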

\begin{proof}
  The twist map in logarithmic coordinates is given by
  $$T_k: (t, x) \mapsto \left(t, x+\frac{t}{a}k\right)$$
  To prove the proposition we now perform a computation similar to the one in the proof of proposition \ref{shearing}. In particular, we see
  $$\frac{d}{dt}T_k = \left(1, \frac{k}{a}\right), \quad\quad\frac{d}{dx}T_k = (0, 1)$$
  $$\partial T_k = \frac{1}{2}\left(1+1, \frac{k}{a} \right), \quad\quad
  \overline{\partial}T_k = \frac{1}{2}\left(1-1, \frac{k}{a}\right)$$
  $$|\overline{\partial}T_k(\partial T_k)^{-1}|^2 = \frac{\frac{k^2}{a^2}}{4 + \frac{k^2}{a^2}}
= \frac{1}{1+4\frac{a^2}{k^2}}$$
This gives
$$K = \frac{\sqrt{1+4\frac{a^2}{k^2}} + 1}{\sqrt{1+4\frac{a^2}{k^2}} - 1}
= 1+\frac{2}{\sqrt{1+4\frac{a^2}{k^2}}-1 }$$
which, again using $\log(1+y) \leq y$, yields the result.
\end{proof}

\section{The iteration and splitting theorems}
\label{sec:poqft}
In this section we prove the two main technical results concerning iterated grafting along a short multicurve.
\begin{theorem}[Iterating a multicurve]
  \label{quasiflow}
  Let $S$ be a closed surface of genus $g > 1$. There are constants $\widetilde{\epsilon}, C > 0$ such that the following holds: 

  Let $\lambda = t_1\gamma_1 + \ldots + t_n\gamma_n$ be a weighted multicurve on $S$ and $\eta=s_1\gamma_1+\ldots+s_n\gamma_n$ be another 
  multicurve with the same supporting curves. Let $X \in \teich(S)$ be a hyperbolic structure such that the hyperbolic length of the geoodesics $\gamma_i$ satisfies $l_X(\gamma_i) \leq \widetilde{\epsilon}$ for all $i$. Then
  $$d_\teich\left(\gr_{\eta}(\gr_{\lambda} X), \gr_{\eta\widetilde{+}\lambda}X\right) \leq C\cdot\left(\max_{i=1,\ldots,n}l_X(\gamma_i)\right)^{1/8}$$
  where $\eta\widetilde{+}\lambda$ is a ``weighted sum'' of $\lambda$ and $\eta$:
  $$\eta\widetilde{+}\lambda = \left(\frac{\pi+t_1}{\pi}\cdot s_1+t_1\right)\gamma_1 + \ldots + \left(\frac{\pi+t_n}{\pi}\cdot s_n+t_n\right)\gamma_n$$
\end{theorem}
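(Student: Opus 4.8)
The plan is to construct an explicit quasiconformal comparison map between $\gr_\eta(\gr_\lambda X)$ and $\gr_{\eta\widetilde{+}\lambda}X$ and to bound its dilatation using the scaling, shearing and twisting building blocks of Section 3. Both sides are obtained from $X$ by cutting along the geodesics $\gamma_i$ and inserting flat cylinders; the whole construction is local at each $\gamma_i$, so it suffices to build the map in a neighbourhood of a single curve $\gamma$ of length $l = l_X(\gamma)$ with weights $t$ and $s$, and check that away from the curves the two surfaces are literally identical. Since grafting along disjoint curves is done independently, the dilatation of the global map is the maximum of the local ones, and the final $(\max_i l_X(\gamma_i))^{1/8}$ bound comes from taking the worst curve.

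\medskip

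The key steps, for a single curve $\gamma$, are as follows. First I would fix a reference region: the standard hyperbolic collar of $\gamma$ on $X$, which by the collar lemma is an embedded annulus whose corresponding angle $\theta(l)$ is close to $\pi/2$ by Proposition~\ref{theta_u}. On $\gr_{\eta\widetilde{+}\lambda}X$ this collar has a flat grafting cylinder of height $w := \frac{\pi+t}{\pi}s + t$ inserted in it; on $\gr_\eta(\gr_\lambda X)$ one first inserts a cylinder of height $t$ (forming $\gr_\lambda X$), and then, since the core geodesic $\gamma'$ of $\gr_\lambda X$ is no longer the flat core of the inserted cylinder but the \emph{hyperbolic} geodesic of the new surface, one re-grafts along $\gamma'$ inserting a further cylinder of height $s$. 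The heart of the matter is to understand the geometry of $\gr_\lambda X$ near $\gamma$: the inserted euclidean cylinder of height $t$ has angle $\pi$ (half-plane on each side), so glued to the two hyperbolic half-collars of angle $\theta(l)$ one gets a region whose total ``angular width'' is $2\theta(l) + \pi \approx 2\pi$, and the hyperbolic geodesic $\gamma'$ of $\gr_\lambda X$ sits in the middle of the flat part; its length and the modulus of the surrounding annulus can be computed, and one finds that grafting height $s$ on $\gamma'$ corresponds, up to a controlled error governed by $\theta(l) - \pi/2$, to additional euclidean height $\frac{\pi+t}{\pi}s$ measured in the original normalization — this is exactly what the weight formula $\frac{\pi+t}{\pi}s + t$ encodes. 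I would then decompose the comparison map on the collar into: a scaling map matching the moduli of the two inserted flat cylinders, a shearing map on the hyperbolic collar pieces absorbing the small mismatch in how the flat and hyperbolic parts glue (here Proposition~\ref{shearing} applies, with bilipschitz constant $B - 1 = O(\psi(\cdot))$-type terms, which by Propositions~\ref{psi_u} and \ref{theta_u} are $O(l)$), and possibly a twist map if the gluing introduces a relative rotation (Proposition~\ref{twistmaps}, with $\log K = O(l/a)$ where $a$ is the modulus of the grafting cylinder, which is large, giving a small contribution). Composing, $\log K$ of the total map is bounded by a constant times a positive power of $l$; tracking the exponents carefully through the modulus estimates $\Mod = \pi/(\text{length})$ gives the stated exponent $1/8$, and $d_\teich \le \frac12 \log K$ finishes it.

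\medskip

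I expect the main obstacle to be the second step: precisely quantifying the geometry of the intermediate surface $\gr_\lambda X$ near $\gamma$, in particular controlling the hyperbolic geodesic $\gamma'$ of $\gr_\lambda X$ — its length, and the conformal structure of an annular neighbourhood of it that meets both the flat cylinder and the original hyperbolic collar. The difficulty is that $\gr_\lambda X$ is a hybrid flat/hyperbolic surface, and I need a uniformizing picture of a neighbourhood of $\gamma'$ good enough to say that ``grafting by $s$ along $\gamma'$'' differs from ``inserting a euclidean cylinder of height $\frac{\pi+t}{\pi}s$ in the original collar coordinates'' by a map of dilatation $O(l^c)$. This requires a careful estimate of how far the hyperbolic metric of $\gr_\lambda X$ on the relevant annulus is from the explicit model metric (two hyperbolic half-collars of angle $\theta(l)$ plus a euclidean strip of angle $\pi$), uniformly in $t$ — the uniformity in the weights is what makes this delicate, and is presumably where the collar-lemma angle estimates of Section~2 do the real work. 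Once that model estimate is in hand, the assembly of the quasiconformal map from the Section~3 building blocks and the bookkeeping of exponents is routine.
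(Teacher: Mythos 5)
Your overall architecture matches the paper's: localize at each curve in the standard hyperbolic collar, identify that the hyperbolic geodesic $\gamma'$ of $\gr_\lambda X$ and the flat core $\delta$ of the grafting cylinder are distinct but nearby curves, build a comparison map out of the scaling/shearing/twist blocks of Section~3, and explain the weight formula by observing that $l_{\gr_\lambda X}(\gamma') \approx \frac{\pi}{\pi+t}l_X(\gamma)$ so that grafting $s$ on $\gamma'$ has the modulus of grafting $\frac{\pi+t}{\pi}s$ on $\gamma$. That is correct and is exactly the paper's strategy (including an intermediate rescaling step that the paper makes explicit with the multicurve $\Gamma$).

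However, the quantitative picture you sketch is wrong in a way that would prevent you from reaching the stated exponent, and the step you defer as ``routine bookkeeping'' is actually where the $1/8$ is decided. Two specific problems. First, the error parameter is not $\theta(l)-\pi/2 = O(l)$: the controlling quantity is the hyperbolic distance $R$ from $\delta$ to $\gamma'$ on $\gr_\lambda X$, which one obtains by combining the upper bound on $l(\delta)$ with a \emph{lower} bound on $l(\gamma')$ and inverting $\cosh^2 d \le (\cosh^2(l(\delta)/2)-1)/(\cosh^2(l(\gamma')/2)-1)$. This gives only $R = O(l^{1/4})$, not $O(l)$; correspondingly the moduli of the two half-annuli $C^1, C^2$ differ multiplicatively by $1 + O(l^{1/4})$, and the shearing block costs $\log K = O(l^{1/4})$, not $O(l)$. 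You never produce this bounding-annulus estimate, and without it none of the dilatation bounds are available. Second, and more seriously, you treat the twist as a bounded nuisance with $\log K = O(l/a)$; in fact the uniformizing map $f$ of the annulus bounded by $\alpha^j$ and $\delta$ can introduce a twist whose amount $n$ is \emph{unbounded} as $l\to 0$ — one has $(n-2)^2 \le \Mod(C^1)^2 - \Mod(C^2)^2$, which with $\Mod(C^j)\sim \theta/l'$ and $\Mod(C^1)-\Mod(C^2)\sim R/l' \sim l^{-3/4}$ gives $n \sim l^{-7/8}$. Plugging $k=n$ and $a = \Mod(B)\sim 1/l$ into the twist-map estimate yields $\log K \sim l^{1/8}$: this is the dominant term and the source of the theorem's exponent. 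Your proposal would (incorrectly) predict a much better bound, and it omits the extremal-length argument bounding the twist (the paper's Lemma~\ref{twistbound}), which is a genuine missing idea rather than a computation to fill in.
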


\begin{theorem}[Splitting a multicurve]
  \label{quasicomm}
  Let $S$ be a closed surface of genus $g > 1$. There are constants $\widetilde{\epsilon}, C > 0$ such that the following holds: 

  Let $\lambda = t_1\gamma_1 + \ldots + t_n\gamma_n$ and $\eta = t_{n+1}\gamma_{n+1} + \ldots t_m\gamma_m$ be disjoint weighted multicurves on $S$. Let $X \in \teich(S)$ be a hyperbolic structure such that $l_X(\gamma_i) \leq \widetilde{\epsilon}$ for all $i=1,\ldots,m$. 

Then
  $$d_\teich\left(\gr_{\eta}(\gr_{\lambda} X), \gr_{\eta + \lambda}X\right) \leq C\cdot\left(\max_{i=1,\ldots,m}l_X(\gamma_i)\right)^{1/8}$$
\end{theorem}

\subsection{Notation and outline of the proof}
\label{sec:outline}

To prove the theorems, we will explicitly construct a \textit{comparison map} from $\gr_{\eta}(\gr_{\lambda} X)$ to $\gr_{\eta\widetilde{+}\lambda}X$ (from $\gr_\eta\gr_\lambda X$ to $\gr_{\eta+\lambda}X$ respectively) and estimate its dilatation.

Before beginning with a formal proof, we first outline the main ideas in the case of theorem \ref{quasiflow}
as well as introduce certain notation for curves which will be used throughout the proofs.
The construction of the comparison map is devided into several steps. We first look at the situation after grafting along $\lambda$ (cf. figure \ref{fig:step} for the case where $\lambda$ is a simple closed curve). 
\begin{figure}[htbp!]
  \centering
  \includegraphics[width=\textwidth]{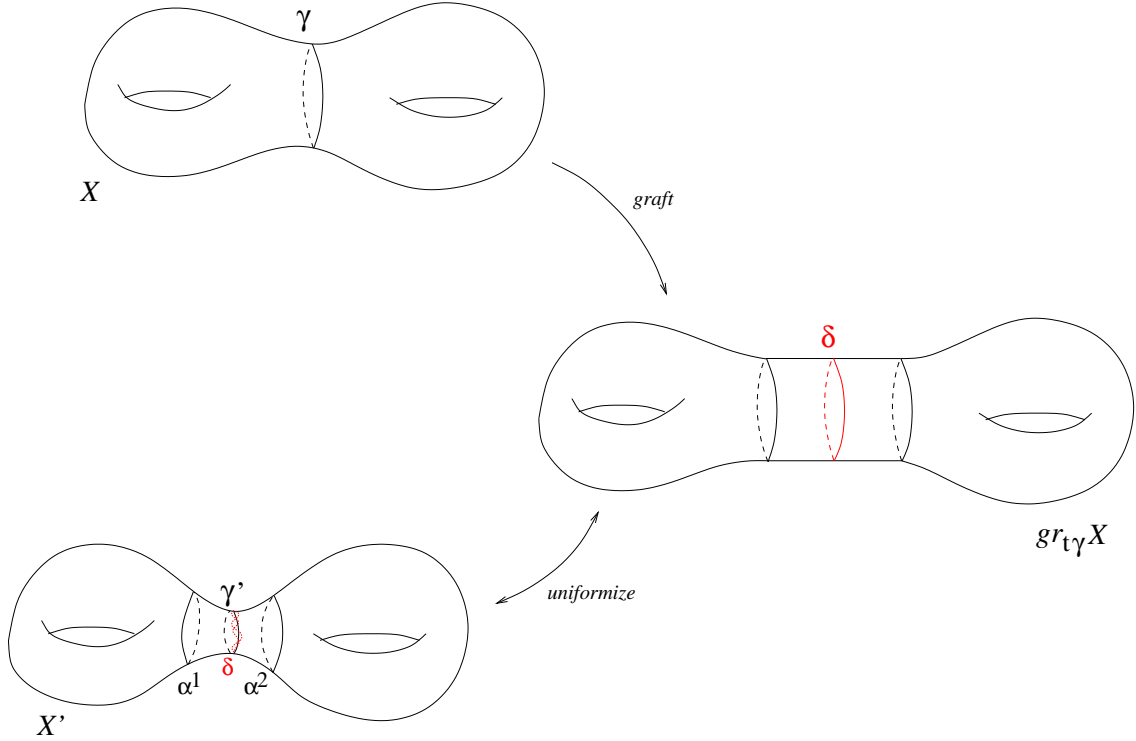}
  \caption{Grafting step}
  \label{fig:step}
\end{figure}
$X' = \gr_{\lambda}X$ is obtained from $X$ by cutting at the hyperbolic geodesics $\gamma_i$ in the
support of $\lambda$ and gluing in the flat cylinders $\gamma_i \times [0,t_i]$. We call these
cylinder the \textit{grafting cylinders} on $X'$ and the curve $\gamma_i \times \{t_i/2\}$ the
\textit{flat core curve} $\delta_i$ of the grafting cylinder corresponding to $\gamma_i$.
Denote by $\gamma'_i$ the geodesic representative of $\delta_i$ in the hyperbolic metric on $X'$.
To obtain $\gr_{\eta}(\gr_{\lambda} X)$ from $X'$, we have to cut $X'$ at the $\gamma'_i$ and
insert flat cylinders, whereas to obtain $\gr_{\eta\widetilde{+}\lambda}X$ from $X'$, we have to cut at the $\delta_i$.

Thus the first step is to see that $\gamma'_i$ and $\delta_i$ are close to each other in the
hyperbolic metric on $X'$. In section \ref{sec:bounding} we show that there is a
\textit{bounding annulus} of small, controlled modulus around $\gamma'_i$ which contains
$\delta_i$.
This is done by showing that the length of $\delta_i$ can be bounded from above, while the length of
the geodesic $\gamma'_i$ is bounded from below -- and thus, by elementary hyperbolic geometry, they
need to be close to each other.

Now we consider the situation at one grafting cylinder (and, for sake of simpler notation, we will
drop the index $i$ from the curves).
Let $\alpha^1$ and $\alpha^2$ be the boundary curves of the standard hyperbolic collar around
$\gamma'$ on $X'$ (cf. figure \ref{fig:step}). Once $\gamma$ is short, the bounding annulus
will be contained in this collar.

Thus we can construct \textit{pre-annulus maps} $\phi^j$, sending the annulus bounded by $\alpha^j$
and $\gamma'$ to the annulus bounded by $\alpha^j$ and $\delta$, which restrict to the identity on
$\alpha^j$. (section \ref{sec:amaps}).  By gluing these maps to the identity mapping on the
complement of the standard hyperbolic collar, we obtain a quasiconformal map
$$X' \setminus \gamma' \to X' \setminus \delta$$
with controlled dilatation (here, $X' \setminus \gamma$ denotes the Riemann surface with boundary
obtained by cutting at $\gamma$).  We then have to care about three issues.

First, the two pre-annulus maps $\phi^1$ and $\phi^2$ have to be modified to take the same values on
$\gamma'$ so that they can be glued to form a map of the surface $X'$ to itself.
Then, as we want to obtain a map from $\gr_{\eta}(\gr_{\lambda} X)$ to $\gr_{\eta\widetilde{+}\lambda}X$
without losing control over the quasiconformality constants, the pre-annulus maps have to be further
modified to send $\gamma'$ to $\delta$ in a way that is compatible with the respective grafting
operations (what this precisely means will be explained in detail in section \ref{sec:amaps}).
These two issues will be handled simultaneously by shearing $\phi^j$ by an appropriate amount,
obtaining \textit{annulus maps} $\Phi^j$ (section \ref{sec:amaps})

Finally, to estimate Teichmüller distance using this map, we have to make sure that it preserves the
marking on $X'$. The construction of the annulus maps may introduce a quite large unwanted twist --
which we compensate in a last step using an appropriate (un-)twist map (section \ref{sec:twist})

As all constructions took place just in the collar neighbourhood around $\gamma'$, we can repeat 
the arguments at all curves $\gamma_i$ to obtain a comparison map from $\gr_{\eta}(\gr_{\lambda} X)$ to $\gr_{\eta\widetilde{+}\lambda}X$. By tracing the error bounds of the involved maps we then conclude the theorem.
The construction for theorem \ref{quasicomm} is very similar; one procedes by showing that the
curves in $\eta$ neither change length nor position too much when grafting along $\lambda$ and then
constucting a comparison map as before.

\subsection{Lengths estimates and bounding annuli}
\label{sec:bounding}

We now construct the bounding annuli as sketched before. To do this we need to control the length of
the grafting curves after grafting along them once. For the proof it is convenient to show several statements
simultaneously
\begin{lemma}[bounding lemma]
  \label{deltabounding}
  Let $X$ be a hyperbolic surface and $\gamma_1, \ldots, \gamma_n$ be simple closed geodesics on
  $X$. Let $\lambda = t_1\gamma_1 + \ldots + t_n\gamma_n$ be a weighted multicurve.
  On the grafted surface $\gr_\lambda X$, consider the flat core curves $\delta_i$ of the grafting
  annuli and the hyperbolic geodesics $\gamma'_i$ in the free homotopy class of $\gamma_i$.
 
 Then there are constants $K_1, K_2, K_3 > 0$, depending only on the lengths of the $\gamma_i$ on
  $X$ such that the following statements hold.
\begin{description}
\item[i) length estimate] For all $i = 1, \ldots, n$ we have
$$K_1\cdot\frac{2\theta}{2\theta + t_i}\cdot l_X(\gamma_i) \leq l_{\gr_{\lambda}X}(\gamma'_i)\leq l_{\gr_{\lambda}X}(\delta_i) \leq \frac{\pi}{\pi + t_i}\cdot l_X(\gamma_i)$$
where $\theta$ is the angle corresponding to the standard collar neighbourhood around $\gamma_i$ on $X$.
If $\gamma_i$ is short enough, one can replace $K_1$ with $1/(1+l_X(\gamma_i))$.

\vspace{2 mm}
\item[ii) $\delta$-bounding annulus]
$\delta_i$ is contained in a hyperbolic $R_i$-tube around $\gamma'_i$ on $\gr_{\lambda}X$. Here, $R_i$ depends only on the length of
$\gamma_i$ and if $\gamma_i$ is short enough, we have $R_i \leq K_2 \cdot l_X(\gamma_i)^{1/4}$.

\vspace{2 mm}
\item[iii) seperation] Let $\gamma$ is a simple closed curve on $X$ disjoint from $\lambda$.  Denote
  by $\gamma^*$ the hyperbolic geodesic in the free homotopy class of $\gamma$ with respect to the
  hyperbolic metric on $\gr_{\lambda}X$.

Then the hyperbolic standard collar neighbourhood around $\gamma^*$ is disjoint from all grafting cylinders on $\gr_\lambda X$ and
$$K_1 \cdot l_X(\gamma) \leq l_{\gr_\lambda X}(\gamma^*) \leq l_X(\gamma)$$
If $\gamma$ is short enough, one can replace $K_1$ by $1/(1+l_X(\gamma))$.

\vspace{2 mm}
\item[iv) $\gamma$-bounding annulus] $\gamma$ is contained in a $R^*$-tube around $\gamma^*$ with
  respect to the hyperbolic metric of $\gr_\lambda X$, where $R^*$ depends only on the length of $\gamma$ and satisfies
 $R^* \leq K_3\cdot l_X(\gamma)^{1/4}$.
\end{description}
\end{lemma}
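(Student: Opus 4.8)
The plan is to read off the local geometry of $\gr_\lambda X$ near each curve from two estimates: an upper bound on hyperbolic length, obtained by comparing the hyperbolic metric of $\gr_\lambda X$ on a suitable subannulus with that subannulus' own complete hyperbolic metric (the Schwarz lemma, using $\Mod=\pi/\ell$ for the core geodesic of an annulus); and a lower bound, obtained by bounding from above the modulus of the maximal embedded annulus around the geodesic and invoking a comparison of hyperbolic and extremal lengths. Propositions~\ref{psi_u} and \ref{theta_u} turn the collar estimates into explicit inequalities. The four items are linked only in that the last assertion of iii) uses iv); so I would prove them in the order: upper length bounds, lower length bounds, bounding annuli ii) and iv), disjointness in iii). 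Throughout, all constants are controlled by $\max_i l_X(\gamma_i)$.

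\medskip\noindent\emph{Upper length bounds.} Inside $\gr_\lambda X$ the grafting cylinder $C_i=\gamma_i\times[0,t_i]$ is an embedded flat annulus of modulus exactly $t_i/l_X(\gamma_i)$, and its two boundary circles are glued to the geodesic boundary of $X$ cut along $\gamma_i$. Gluing to each of these the adjacent half of the standard hyperbolic collar of $\gamma_i$ on $X$ --- of angle $\theta=\theta(l_X(\gamma_i))$, hence modulus $\theta/l_X(\gamma_i)$ per side --- gives an embedded annulus $A_i\subset\gr_\lambda X$ which, by the reflection symmetry across $\delta_i=\gamma_i\times\{t_i/2\}$, has $\delta_i$ as its core geodesic; by superadditivity of the modulus, $\Mod(A_i)\ge(2\theta+t_i)/l_X(\gamma_i)$. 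Since the inclusion $A_i\hookrightarrow\gr_\lambda X$ does not increase the hyperbolic metric (Schwarz lemma), $l_{\gr_\lambda X}(\delta_i)\le\pi/\Mod(A_i)\le\pi l_X(\gamma_i)/(2\theta+t_i)$, which by Proposition~\ref{theta_u} equals $\frac{\pi}{\pi+t_i}l_X(\gamma_i)$ up to a relative error of order $l_X(\gamma_i)$; as $\gamma'_i$ is the geodesic representative of $\delta_i$ the inequality $l_{\gr_\lambda X}(\gamma'_i)\le l_{\gr_\lambda X}(\delta_i)$ is automatic. For a curve $\gamma$ disjoint from $\lambda$ I would instead use that the hyperbolic metric of $\gr_\lambda X$ is dominated by its Thurston metric (the conformal metric equal to the hyperbolic metric off the grafting cylinders and flat on them), which agrees with the hyperbolic metric of $X$ on $X\setminus\lambda\supset\gamma$; this gives $l_{\gr_\lambda X}(\gamma^*)\le l_X(\gamma)$ at once.

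\medskip\noindent\emph{Lower length bounds --- the main obstacle.} Let $B$ be an embedded annulus in $\gr_\lambda X$ whose core is freely homotopic to $\gamma'_i$. The part of $B$ inside a grafting cylinder is a subannulus of it; since the core of $B$ is homotopic to the core $\delta_i$ of $C_i$, only $C_i$ itself (contributing at most $t_i/l_X(\gamma_i)$) and at most two pieces of $B$ flanking $C_i$ contribute to the modulus, and each flanking piece is an embedded annulus homotopic to the boundary geodesic $\gamma_i$ in the bordered hyperbolic surface $X$ cut along $\lambda$. Doubling such a piece across that boundary geodesic produces an embedded annulus of twice the modulus about a closed geodesic of length $l_X(\gamma_i)$ on a closed hyperbolic surface; since a short closed geodesic of length $\ell$ carries no embedded annulus of modulus larger than $\pi/\ell$ (Maskit's comparison of extremal and hyperbolic lengths), each flanking piece has modulus at most $\frac{\pi}{2l_X(\gamma_i)}$, so $\Mod(B)\le\frac{\pi+t_i}{l_X(\gamma_i)}$. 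Hence the maximal such annulus is bounded this way, and the same comparison applied in $\gr_\lambda X$ --- legitimate because $\gamma'_i$ is short by the upper bound just proven --- yields the asserted lower bound for $l_{\gr_\lambda X}(\gamma'_i)$, the error being absorbed into $K_1=1/(1+l_X(\gamma_i))$. The same decomposition, with the flanking pieces now homotopic to $\gamma$ and the comparison made against the analogous annulus on $X$, gives the lower bound in iii). The delicate point is to keep every error term dependent only on the lengths $l_X(\gamma_i)$ and to check that the pieces really are essential annuli of the claimed homotopy type.

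\medskip\noindent\emph{Bounding annuli and disjointness.} By the two-sided length estimates the ratios $l_{\gr_\lambda X}(\delta_i)/l_{\gr_\lambda X}(\gamma'_i)$ and $l_X(\gamma)/l_{\gr_\lambda X}(\gamma^*)$ are $1+O(l_X(\gamma_i))$. A closed curve freely homotopic to a geodesic $g$ of length $\ell$ that attains hyperbolic distance $R$ from $g$ has length at least the distance between a point at distance $R$ from the axis of $g$ and its image under translation by $\ell$ along that axis, a quantity $\sim\ell\cosh R$ for $\ell$ small; feeding in the length ratio forces $\cosh R_i\le 1+O(l_X(\gamma_i))$, i.e.\ $R_i=O(l_X(\gamma_i)^{1/2})$, which is stronger than the asserted $O(l_X(\gamma_i)^{1/4})$, and likewise for $R^*$. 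Passing from the bound on $R_i$ to the modulus of the $R_i$-tube uses Proposition~\ref{psi_u}. Finally, for the disjointness claim in iii): outside the standard collars of the $\gamma_i$ the surface $\gr_\lambda X$ is canonically identified with $X$; in particular the standard collar of $\gamma$ on $X$, being disjoint from those of the $\gamma_i$, survives unchanged in $\gr_\lambda X$ and is disjoint from every grafting cylinder, and since $\gamma^*$ lies in a tube of radius $R^*$ around $\gamma$, once the $\gamma_i$ are short the standard collar of $\gamma^*$ is contained in that of $\gamma$ and hence misses every grafting cylinder. Tracking the constants through these steps, which depend only on $\max_i l_X(\gamma_i)$, yields the lemma.
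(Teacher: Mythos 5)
The central difficulty — and where your sketch has a genuine gap — is the lower bound in \textbf{i)}. You propose to bound the modulus of a maximal embedded annulus $B$ around $\gamma'_i$ by cutting it with the boundary circles of the grafting cylinder $C_i$ into three pieces and adding up their moduli: the middle piece bounded by $\Mod(C_i)=t_i/l_X(\gamma_i)$ and each flanking piece by $\pi/(2l_X(\gamma_i))$ via doubling and the Maskit comparison. But that inequality goes the wrong way: Gr\"otzsch's inequality states that if disjoint subannuli $B_1,B_2,B_3$ of $B$ are each essential, then $\Mod(B_1)+\Mod(B_2)+\Mod(B_3)\le\Mod(B)$, with equality only for round cuts. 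Thus your decomposition gives a \emph{lower} bound on $\Mod(B)$, not the upper bound you need to force $l_{\gr_\lambda X}(\gamma'_i)$ from below. The paper avoids this entirely: the lower bound is the one cited from Diaz--Kim, obtained by building an explicit quasiconformal map $\gr_\lambda X\to X$ that collapses the grafting cylinders (dilatation controlled by $(2\theta+t_i)/2\theta$) and then invoking Wolpert's lemma $d_\teich\ge\tfrac12|\log(l_X/l_Y)|$ to convert the dilatation bound into a length-ratio bound. The paper is also inductive on the number $n$ of curves, because the upper bound for one curve and the separation estimate iii) are used to push the lower bound from $n$ to $n+1$; your proposal suppresses this bootstrap, which is what makes the constants uniform over multicurves.

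A second, more minor discrepancy is in the upper bound of \textbf{i)}. Your Schwarz-lemma argument on the extended half-collar annulus $A_i$ (modulus $\ge(2\theta+t_i)/l_X(\gamma_i)$) gives $l_{\gr_\lambda X}(\delta_i)\le\pi l_X(\gamma_i)/(2\theta+t_i)$, which, since $2\theta<\pi$, is strictly weaker than the claimed $\frac{\pi}{\pi+t_i}l_X(\gamma_i)$; the statement is exact, not up to a relative $O(l)$ error, and the exact form is used downstream (in the definition of $\eta\widetilde{+}\lambda$ and in Section 4.5). The paper obtains the exact constant via Diaz--Kim's device of embedding the \emph{full} hyperbolic plane holomorphically into the universal cover of $\gr_\lambda X$ by $z\mapsto z^{(\pi+t_i)/\pi}$ and applying the Schwarz lemma to the image of a geodesic segment, which does not lose the collar-angle deficit $\pi-2\theta\sim l$.

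Your treatment of \textbf{ii)} and \textbf{iv)} matches the paper's in spirit (distance-to-geodesic estimate from the two-sided length bounds); you correctly observe that one in fact gets $R=O(l^{1/2})$, which is stronger than the paper's stated $O(l^{1/4})$ — the paper loosens its bound at the step $1+K'l\le(1+K\sqrt l)^2$ before taking $\arccosh$. For the disjointness in \textbf{iii)}, your reasoning (collar of $\gamma^*$ contained in the $X$-collar of $\gamma$) is vaguer than what the paper actually proves — that the entire grafting cylinder $C_i$ lies inside the standard collar of $\gamma'_i$ on $\gr_\lambda X$, so disjointness of collars for distinct geodesics does the rest — but it is not where the real problem lies. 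The essential fix needed is to replace the modulus-decomposition argument for the lower length bound by the QC-collapsing plus Wolpert argument, and to restore the $n\to n+1$ induction.
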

\begin{figure}[htbp!]
  \centering
  \includegraphics[width=0.5\textwidth]{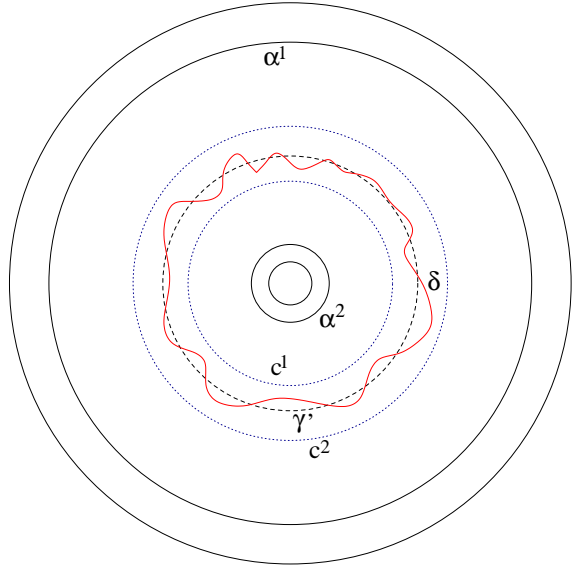}
  \caption{The situation in lemma \ref{deltabounding} \textit{ii)}. This figure depicts the (hyperbolic) annular cover of $\gr_{\lambda}X$ corresponding to $\gamma_i'$ (dotted central circle). $\alpha^1$ and $\alpha^2$ are the boundary curves for the standard collar around $\gamma'_i$.
$c^1$ and $c^2$ are the boundary curves of the $\delta$-bounding annulus}
\label{fig:collar}
\end{figure}

\begin{proof}
The lemma is proved by induction on the number of curves $n$ in the multicurve. One procedes as follows

\vspace{4 mm}
\textbf{i) for $n=1$}

This is a length estimate obtained by Diaz and Kim (Proposition 3.4 in \cite{KD07}). They show that
$$\frac{2\theta}{2\theta + \max(t_i)}\cdot l_X(\gamma_i) \leq l_{\gr_{\lambda}X}(\gamma'_i) \leq \frac{\pi}{\pi + t_i}\cdot l_X(\gamma_i)$$
which coincides with our (stronger) claim for $n=1$ and yields the upper bound on $ l_{\gr_{\lambda}X}(\gamma'_i)$ for all $n$. For convenience (and to emphasize why their length estimate also works for $\delta_i$) we will shortly summarize the proof given in \cite{KD07}. 

To see the upper bound one constructs an embedded holomorphic disc in the universal cover of $\gr_{\lambda}X$, such that the imaginary axis is sent to a lift of the curve $\delta_i$.
Identify the universal cover of $X$ with the hyperbolic plane $\HH^2$ and assume that the imaginary axis is a lift of $\gamma_i$.
To obtain the universal cover of $\gr_{\lambda}X$ from $\HH^2$, we have to cut along the imaginary axis and insert the sector $\{r\cdot e^{i\varphi}, r>0, \pi/2 \leq \varphi \leq \pi/2 + t_i\}$ (the resulting surface is to be understood multi-sheeted for large values of $t_i$) and then repeat the same picture equivariantly at other lift of all of the $\gamma_j$.
In particular, the map $z \mapsto z^{\frac{\pi+t_i}{\pi}}$ yields an embedding of $\HH^2$ in the universal cover of $\gr_{\lambda}X$. 
The image of the straight arc connecting $i$ and $e^{\frac{\pi}{\pi + t_i}l_X(\gamma_i)}i$ under this map projects to $\delta_i$ on $\gr_{\lambda}X$.
 As holomorphic maps are contracting with respect to the hyperbolic metrics, this gives the estimate
$$l(\delta_i) \leq \frac{\pi}{\pi+t_i}l_X(\gamma_i)$$
which yields the upper bound.

The lower bound follows by explicitly constructing a quasiconformal mapping between $\gr_{\lambda} X$ and $X$ (by collapsing the grafting cylinders) to estimate the Teichmüller distance and then using a lemma by Wolpert (\cite[lemma 3.1]{Wolpert:1979fj}) to relate Teichmüller distance and hyperbolic length ratios and obtain the claim.

\vspace{4 mm}
\textbf{i) for some n $\Rightarrow$ ii) for n}

The fact that $\delta_i$ and $\gamma'_i$ are freely homotopic allows us to estimate the distance from any point $\delta_i(s)$ on $\delta_i$ to $\gamma_i'$ in terms of their lengths (cf. \cite[Chapter 2, Theorem 2.23]{McMullen:1994uo})
$$\cosh^2d(\delta_i(s),\gamma'_i) \leq \frac{\cosh^2(l(\delta_i)/2)-1}{\cosh^2(l(\gamma'_i)/2) - 1}$$
In particular, with the upper bounds for $l(\delta_i)$ and the lower bounds for $l(\gamma'_i)$ from \textit{i)}, we can use this formula to obtain the constants $R_i$ which define the neighbourhoods with the claimed property.

It remains to show the estimate for $R_i$. To this end, recall the estimate for $\theta$ obtained in Proposition \ref{theta_u} to find using \textit{i)}
$$l(\gamma'_i) \geq K_1 \cdot \frac{2\theta}{2\theta + t_i}l_i \geq K_1\cdot\frac{\pi-l_i}{\pi-l_i+t_i}$$
where $l_i = l_X(\gamma_i)$. For the rest of the computation, we will drop the index $i$ of $l, t, \gamma$ and $\delta$ to make the formulae easier to read.
We claim that there is a $K$ such that
$$ \frac{\cosh^2(l(\delta)/2)-1}{\cosh^2(l(\gamma')/2) - 1} \leq \left(1 + K\sqrt{l}\right)^2$$
Note that comparing derivatives at $x=0$ yields
$$1+x^2 \leq \cosh^2(x) \leq 1+x^2+\frac{1}{2}x^4$$
near $x=0$ and thus
$$ \frac{\cosh^2(l(\delta)/2)-1}{\cosh^2(l(\gamma')/2) - 1} \leq \frac{ (l(\delta)/2)^2 +\frac{1}{2}(l(\delta)/2)^4 }{(l(\gamma')/2)^2}.$$
Using the estimate above and $l(\delta) \leq \pi/(\pi+t)\cdot l$ we obtain
$$\leq \frac{ \left(\frac{\pi}{\pi+t}\right)^2\left(\frac{l}{2}\right)^2 + \frac{1}{2}\left(\frac{\pi}{\pi+t}\right)^4\left(\frac{l}{2}\right)^4}{ K_1^2\left(\frac{\pi-l}{\pi-l+t}\right)^2\left(\frac{l}{2}\right)^2}
= \frac{1}{K_1^2}\cdot\frac{\left(\frac{\pi}{\pi+t}\right)^2 + \frac{1}{2}\left(\frac{\pi}{\pi+t}\right)^4\left(\frac{l}{2}\right)^2 }{\left(\frac{\pi-l}{\pi-l+t}\right)^2}$$
Next, we note that
$$\left(\frac{\pi}{\pi+s} \right)\left(\frac{\pi+s-l}{\pi-l}\right) \leq \frac{\pi}{\pi-l} = 1 + \frac{l}{\pi - l}$$
and that $1/K_1^2 \leq (1+l)^2$ for short curves $\gamma$ (compare \textit{i)}). This allows to further estimate 
$$ \frac{\cosh^2(l(\delta)/2)-1}{\cosh^2(l(\gamma')/2) - 1} \leq 
(1+l)^2\left( \left(1+\frac{l}{\pi-l}\right)^2 + \frac{1}{2}\left(\frac{\pi}{\pi+t}\right)^2 \left(1+\frac{l}{\pi-l}\right)^2\left(\frac{l}{2}\right)^2 \right) $$
As $\gamma$ is assumed to be short, $l$ is bounded, and thus the right hand side is smaller than $1+K'\cdot l$ for some $K'$ (independent of $t$, as $\pi/(\pi+t) \leq 1$). But then there is a $K$ such that $1+ K'\cdot l \leq (1+K\sqrt{l})^2$ which proves our claim. 
From what we have seen up to now we know that 
$$R_i \leq \arccosh\left(1+K\sqrt{l}\right)$$
But $\cosh(x) \geq 1+ \frac{1}{2}x^2$, and thus
$$ 1 + K\sqrt{l} \leq \cosh\left(K_2\cdot l^{1/4}\right)$$
for an appropriate $K_2$ -- which proves statement \textit{ii)}.

\vspace{4 mm}
\textbf{i), ii) for some n $\Rightarrow$ iii) for n}

The idea is to show that the grafting cylinders around $\delta_i$ are contained in the hyperbolic collar neighbourhoods of the $\gamma_i'$. By the usual collar lemma this shows that the collar around $\gamma^*$ is disjoint from the grafting cylinders. 
To do so, we estimate the hyperbolic width of the grafting cylinders and compare this to the width of the standard collars.

Let $C_i$ be the \textit{extended grafting cylinder} around $\gamma_i'$ -- the union of the standard hyperbolic collar neighbourhood of $\gamma_i$ with the grafting cylinder. Its modulus is given by
$$\Mod(C_i) = \frac{2\theta + t_i}{l_X(\gamma_i)}$$
This can be seen by considering the universal covering of $X$ by $\HH^2$ such that the imaginary axis is a lift of $\gamma_i$. Then the collar neighbourhood lifts to a regular neighbourhood of this axis (which is a infinite circle segment with vertex angle $2\theta$). Grafting at $\gamma_i$ by $t_i$ amounts to inserting ``lunes'' at each lift of the $\gamma_i$ (see section 2 of \cite{McMullen:1998mz}). At the imaginary axis, such a lune is just an euclidean circle sector with vertex angle $t_i$, and therefore the extended grafting cylinder is obtained as the projection of a circle sector of vertex angle $2\theta + t_i$. As the modulus of such a cylinder is given by $\log(w)/\phi$ where $w$ is the euclidean width and $\phi$ the total angle, the claim follows.
Also note that the core geodesic with respect to the complete hyperbolic metric of $C_i$ is just $\delta_i$ and the grafting cylinder is a round subannulus of $C_i$. 
In the following, we will again drop the index $i$ and denote $l_X(\gamma_i)$ by $l$.

For the complete hyperbolic metric of the extended grafting cylinder, the length of the core curve is given by 
$$\widetilde{l} := l_C(\delta) = \pi/\Mod(C) = \frac{\pi}{2\theta + t}l$$
Consider now the universal covering $\HH^2 \to C$ such that the imaginary axis is a lift of the core curve $\delta$. The grafting cylinder in $C$ then lifts to a regular neighbourhood of this axis (as it is a round subannulus). If $2\varphi$ denotes the vertex angle of this segment, the modulus of the grafting cylinder is given by $2\varphi / \widetilde{l}$. As we know the modulus of the grafting cylinder ($t_i/l$) we obtain 
$$\varphi = \frac{\pi}{2}\cdot\frac{t}{t+2\theta}, \quad\quad \phi := \pi/2 - \varphi =  \frac{\pi}{2}\cdot\frac{2\theta}{2\theta + t}$$
Therefore the hyperbolic distance of $\delta$ to the boundary curves (in the complete metric of the extended grafting cylinder) is given by
$$B := \log\frac{\cos(\phi/2)}{\sin(\phi/2)}$$
However, as holomorphic maps between complete hyperbolic surfaces are contracting, $B$ also bounds the distance of $\delta$ to the boundary of the grafting cylinder in the hyperbolic metric of $\gr_\lambda X$.

On the other hand, the width of the standard collar is given by (see e.g. section 3.8 of \cite{Hubbard:2006fk})
$$M(x) = \frac{1}{2}\log\frac{\cosh(x/2)+1}{\cosh(x/2)-1}$$
By \textit{i)} we know that $l_{\gr_\lambda X}(\gamma_i') \leq l' := \frac{\pi}{\pi+t}l$ and thus the collar width is larger than $M(l')$.
By \textit{ii)} we know that $\delta \subset {\mathcal U}_R(\gamma')$ where $R = K_2\cdot l^{1/4}$ and therefore the grafting cylinder is contained in a $(B+R)$-neighbourhood of $\gamma'$. Thus we are done once we show that $R+B \leq M(l')$.
Explicitly, this means
$$\log\left(\frac{\cos\phi/2}{\sin\phi/2}\right) + K_2\cdot l^{1/4} \leq \frac{1}{2}\log\frac{\cosh(l'/2)+1}{\cosh(l'/2)-1}$$
$$\Leftrightarrow \quad\quad e^{2K_2\cdot l^{1/4}} \frac{1}{(\tan(\phi/2))^2} \leq \frac{\cosh(l'/2)+1}{\cosh(l'/2)-1}$$
Note that $(\tan^2)'' > 0$ globally, and thus $\tan(x)^2 \geq x^2$. On the other hand, defining 
$$h(x) = \frac{\cosh(x/2)-1}{\cosh(x/2)+1}$$
we find $h(0)=0$, $h'(0) = 0$, $h''(0) = 1/8$, $h'''(0)=0$ and $h^{(iv)}(0) < 0$ and therefore, for small $x$, we have $h(x) \leq \frac{1}{16}x^2$. Hence it suffices to show
$$e^{2K_2\cdot l^{1/4}}\frac{4}{\phi^2} \leq 16 \cdot l'^{-2} \quad\Leftrightarrow\quad
e^{2K_2\cdot l^{1/4}}l'^2 \leq 4\phi^2$$
Recalling the definitions of $l'$ and $\phi$, this follows once
$$e^{2K_2\cdot l^{1/4}}\left(\frac{\pi}{\pi+t}\right)^2l^2 \leq 4\frac{\pi^2}{4}\left(\frac{2\theta}{2\theta+t}\right)^2$$
As $\theta < \pi/2$ this is true, once $e^{2K_2\cdot l^{1/4}}l^2 \leq (2\theta)^2$ -- which will be satisfied, once $l$ is small enough.

It remains to show the length estimate. To this end, denote the hyperbolic standard collar around $\gamma^*$ on $\gr_\lambda X$ by $A$. The modulus of $A$ is determined by the length of $\gamma^*$ alone, more precisely we have
$$\Mod(A) = \pi\left( 1 - \frac{4}{\pi}\arctan\left(\frac{e^{l/2}-1}{e^{l/2}+1}\right) \right)\cdot\frac{1}{l}$$
where $l = l_{\gr_\lambda X}(\gamma^*)$, as can be checked using the explicit form of $M$ given above.

As $A$ is disjoint from all grafting cylinders, we see the same annulus on $X$: there is a holomorphic inclusion $A \to X$ (by removing the grafting cylinders). As holomorphic maps are contracting, we have
$$l_X(\gamma) \leq l_A(\gamma^*) = \frac{\pi}{\Mod(A)} \leq \frac{l_{\gr_\lambda X}(\gamma^*)}{K}$$
with $K = K(l) = 1 - \frac{4}{\pi}\arctan\left(\frac{e^{l/2}-1}{e^{l/2}+1}\right)$; so $l_{\gr_\lambda X}(\gamma^*) \geq K\cdot l_X(\gamma)$.

The equation $l_X(c) \geq l_{\gr_\lambda X}(c)$ is true for any curve $c$ which does not intersect $\lambda$ (see e.g. theorem 3.1. in \cite{McMullen:1998mz}). 

Finally, we need to prove that $K \geq \frac{1}{1+l}$ for short $\gamma$. But this follows, as $K(0) = 1$ and $K'(0) = -1/\pi$, which shows the claim for small $l$.

\vspace{4 mm}
\textbf{i), iii) for some n $\Rightarrow$ i) for (n+1)}

As explained in the step \textit{i) for $n=1$}, the right hand side inequality in \textit{i)} is already known for all $n$.. It remains to show the left hand side.
To this end, write $\lambda = \lambda' + s\cdot \gamma$ where $\lambda'$ is a multicurve with $n$ curves and $\gamma$ is disjoint from $\lambda'$.

By collapsing the extended grafting cylinder to the standard collar, one sees
$$d_\teich\left(\gr_{\lambda}X, \gr_{\lambda'}X\right) \leq \frac{1}{2}\log\left(\frac{2\theta}{2\theta + s}\right)$$
By a lemma of Wolpert \cite[lemma 3.1]{Wolpert:1979fj} this implies for the lengths
$$l_{\gr_\lambda X}(\gamma) \geq \frac{2\theta}{2\theta + s}l_{\gr_\lambda' X}(\gamma)$$
But, as $\gamma$ is disjoint from $\lambda'$ the length estimate from \textit{iii)} yields
$$l_{\gr_\lambda X}(\gamma) \geq K_1\frac{2\theta}{2\theta + s}l_{X}(\gamma)$$
By applying the argument to all $\gamma$ in $\lambda$ the claim follows.

\vspace{4 mm}
\textbf{i), iii) for some n $\Rightarrow$ iv) for n}

This claim is proven analogous to the step \textit{i) for n} $\Rightarrow$ \textit{ii) for n}. We know that $l_{\gr_\lambda X}(\gamma) \leq l_X(\gamma) =: l$ and $l_{\gr_\lambda X}(\gamma^*) \geq K_1\cdot l_X(\gamma)$. One now computes
$$\frac{\cosh^2(l(\gamma)/2)-1}{\cosh^2(l(\gamma^*)/2) - 1} \leq \frac{ (l(\gamma)/2)^2 +\frac{1}{2}(l(\gamma)/2)^4 }{(l(\gamma^*)/2)^2}$$
$$\leq \frac{\left(\frac{l}{2}\right)^2+ \frac{1}{2}\left(\frac{l}{2}\right)^4}{K^2\left(\frac{l}{2}\right)^2} \leq (1+l)^2\left(1+\frac{1}{2}\left(\frac{l}{2}\right)^2 \right) \leq 1 + C\cdot l.$$
From here, one concludes the proof exactly as in the step \textit{i) $\Rightarrow$ ii)}.
\end{proof}

Statements \textit{ii)} (respectively \textit{iv)}) of the preceding lemma show that from the point
of view of the hyperbolic metric on the grafted surface, the $\gamma'_i$ and $\delta_i$
(respectively $\gamma$ and $\gamma^*$) are not far apart.

We will need another formulation of this fact in terms of moduli. To this end, note that the $R_i$
(respectively $R^*$) neighbourhood will be an embedded annulus once $R_i$ ($R^*$) is small
enough. As the collar width increases for shorter curves, while $R_i$ and $R^*$ decrease, there is a
constant $\epsilon>0$ such that for $\epsilon$-short curves the $R_i$ ($R^*$) neighbourhoods will be
embedded annuli in the hyperbolic collars.
For the construction of the comparison maps we will always assume that this condition is satisfied.
In the sequel, we will call these neighbourhoods \textit{bounding annuli}.

The boundary $\{z \in \gr_{\lambda}X, d(z,\gamma'_i) = R_i\}$ of such an annulus then consists of two curves, which in the sequel we will denote by $c_i^1$ and $c_i^2$ (or, $c_*^1, c_*^2$ in the $\gamma^*$-bounding case).
Furthermore, denote the boundary curves of the hyperbolic collar around $\gamma_i'$ (resp. $\gamma^*$) by $\alpha_i^1$ and $\alpha_i^2$ in such a way, that $c_i^1$ ($c_*^1$) is the component which is closer to $\alpha_i^2$ in the standard hyperbolic collar (also cf. figure \ref{fig:collar})

Using this notation, we obtain
\begin{lemma}[Modulus of bounding annuli]
\label{deltamod}
\begin{enumerate}[i)]
\item  Let $C^j_i$ be the annulus bounded by $\alpha^1_i$ and $c^j$.

The moduli of $C_i^j$ satisfy (for small $l_X(\gamma_i)$)
    $$\Mod(C_i^1) \leq \frac{\theta(l'_i) + R_i}{l'_i}, \quad\quad \Mod(C_i^2) \geq \frac{\theta(l'_i)-R_i}{l'_i}$$
    where $l'_i = l_{\gr_{s\lambda}}(\gamma_i)$ and $l_i = l_X(\gamma_i)$ and $R_i$ is the constant from lemma \ref{deltabounding} \textit{ii)}.
  In particular
    $$\frac{\Mod(C_i^1)}{\Mod(C_i^2)} \leq \frac{\theta(l_i)+R_i}{\theta(l_i)-R_i}$$

 By symmetry, the same result holds if we replace $\alpha_i^1$ by $\alpha_i^2$ and switch the roles of $C_i^1$ and $C_i^2$.
\item Let $C_*^j$ be the annulus bounded by $\alpha_*^1$ and $c^j$.
Then the inequalities from \textit{i)} hold, replacing $R_i$ by $R^*$, where $R^*$ is the constant from \ref{deltabounding} \textit{iv)}.
\end{enumerate}
\end{lemma}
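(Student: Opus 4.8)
The plan is to compute all four moduli exactly by passing to the annular cover of $\gr_\lambda X$ associated to $\gamma'_i$. Fix $i$, write $l'_i = l_{\gr_\lambda X}(\gamma'_i)$, and let $N_i$ be the cover of $\gr_\lambda X$ corresponding to the cyclic subgroup generated by $\gamma'_i$; uniformizing, $N_i \cong \HH^2/\langle z \mapsto e^{l'_i}z\rangle$, and the coordinate $w = \log z$ exhibits $N_i$ as the flat cylinder $\{\,0 < \Im w < \pi\,\}/(w \sim w + l'_i)$. Hence any sub-annulus of $N_i$ whose preimage in $\HH^2$ is an angular sector $\{\,\varphi_1 < \arg z < \varphi_2\,\}$ has modulus exactly $(\varphi_2 - \varphi_1)/l'_i$. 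The curves $\alpha_i^1,\alpha_i^2,c_i^1,c_i^2$ all bound such sectors: by the collar lemma the standard collar of $\gamma'_i$ is embedded in $\gr_\lambda X$, and by the hypothesis fixed just before the lemma the $R_i$-tube around $\gamma'_i$ is an embedded annulus inside that collar. Therefore, over the collar, the hyperbolic metric of $\gr_\lambda X$ agrees with that of $N_i$; the collar lifts to the regular $M(l'_i)$-neighbourhood of a lift of $\gamma'_i$ --- the sector of half-angle $\theta(l'_i)$ --- and $c_i^1, c_i^2$ lift to the equidistant rays at angular distance $\psi(R_i)$ on the two sides of that lift.

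Normalize so that $\alpha_i^1$ lies at $\arg z = \pi/2 - \theta(l'_i)$; then $\alpha_i^2$ is at $\pi/2 + \theta(l'_i)$, and by the convention that $c_i^1$ is the boundary component of the bounding annulus closer to $\alpha_i^2$, we have $c_i^1$ at $\pi/2 + \psi(R_i)$ and $c_i^2$ at $\pi/2 - \psi(R_i)$. Thus $C_i^1$ (between $\alpha_i^1$ and $c_i^1$) straddles $\gamma'_i$ with angular span $\theta(l'_i) + \psi(R_i)$, while $C_i^2$ (between $\alpha_i^1$ and $c_i^2$) lies on one side of $\gamma'_i$ with angular span $\theta(l'_i) - \psi(R_i)$, so that
\[
\Mod(C_i^1) = \frac{\theta(l'_i)+\psi(R_i)}{l'_i}, \qquad \Mod(C_i^2) = \frac{\theta(l'_i)-\psi(R_i)}{l'_i}.
\]

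From here the inequalities are immediate. Proposition \ref{psi_u} gives $\psi(R_i) \le R_i$ once $\gamma_i$ is short, whence $\Mod(C_i^1) \le (\theta(l'_i)+R_i)/l'_i$ and $\Mod(C_i^2) \ge (\theta(l'_i)-R_i)/l'_i$. For the ratio, $\theta$ is decreasing and Lemma \ref{deltabounding} i) gives $l'_i \le l_i$, so $\theta(l'_i) \ge \theta(l_i)$; since $R_i \le K_2 l_i^{1/4} < \theta(l_i)$ for short $\gamma_i$, the map $(\theta,x) \mapsto (\theta+x)/(\theta-x)$ is positive, increasing in $x$, and decreasing in $\theta$ on the relevant range, so
\[
\frac{\Mod(C_i^1)}{\Mod(C_i^2)} = \frac{\theta(l'_i)+\psi(R_i)}{\theta(l'_i)-\psi(R_i)} \le \frac{\theta(l'_i)+R_i}{\theta(l'_i)-R_i} \le \frac{\theta(l_i)+R_i}{\theta(l_i)-R_i}.
\]
Normalizing on $\alpha_i^2$ instead yields the symmetric statement, and part ii) follows verbatim in the annular cover of $\gr_\lambda X$ associated to $\gamma^*$, using $l^* = l_{\gr_\lambda X}(\gamma^*)$ for $l'_i$, the constant $R^*$ of Lemma \ref{deltabounding} iv) for $R_i$, and the bound $l^* \le l_X(\gamma)$ of Lemma \ref{deltabounding} iii) for $l'_i \le l_i$; the $\epsilon$-shortness hypothesis again places the $\gamma^*$-bounding annulus inside the embedded standard collar of $\gamma^*$.

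I do not anticipate a real obstacle: once everything is placed inside an embedded collar the moduli are exact and the rest is elementary monotonicity. The two points that need attention are the metric comparison --- that the relevant curves really lift to honest angular sectors, which is precisely where embeddedness of the bounding annulus inside the collar enters --- and the bookkeeping of which of $c_i^1, c_i^2$ lies on which side of $\gamma'_i$ (so that $C_i^1$ straddles the core and $C_i^2$ does not); swapping these would interchange the two inequalities.
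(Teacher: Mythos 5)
Your argument is correct and follows the same route as the paper: pass to the annular cover of $\gr_\lambda X$ at $\gamma'_i$ where all four boundary curves become concentric circles, read off $\Mod(C_i^1) = (\theta(l'_i)+\psi(R_i))/l'_i$ and $\Mod(C_i^2) = (\theta(l'_i)-\psi(R_i))/l'_i$ by additivity of moduli of nested round annuli, then apply Proposition \ref{psi_u}, the monotonicity of $(\theta+x)/(\theta-x)$ in $\theta$, and $\theta(l'_i)\geq\theta(l_i)$ (from $l'_i\leq l_i$). The paper states the modulus identity more tersely but the content is identical, including the same handling of part \textit{ii)}; your extra care with the labelling of $c_i^1$ versus $c_i^2$ and with the embeddedness of the bounding annulus inside the collar is exactly what makes the modulus computation legitimate.
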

\begin{proof}
   Recall that $\psi(r)$ denotes the angle corresponding to an diameter-$2r$-annulus (cf. section \ref{sec:notation}). Then by definition of $c^1_i$ and $c^2_i$ and \ref{deltabounding} \textit{ii)} we have (as all involved annuli are round subannuli of the annular cover, and thus moduli behave additively)
   $$\Mod(C^1_i) = \frac{\theta(l'_i) + \psi(R_i)}{l'_i}, \quad\quad \Mod(C_i^2) = \frac{\theta(l'_i) - \psi(R_i)}{l'_i}$$
   As $\psi(r) \leq r$ for small $r$ (cf. proposition \ref{psi_u}) we see the first claim.
   Then we also have
$$\frac{\Mod(C_i^1)}{\Mod(C_i^2)} \leq \frac{\theta(l(\gamma'_i)) + R_i}{\theta(l(\gamma'_i)) - R_i}$$
   But, as the map $x \mapsto (x+1)/(x-1)$ is decreasing, and $\theta(l(\gamma')) \geq \theta(l)$, we conclude \textit{i)}.
The statement in \textit{ii)} follow in the exact same way, using \ref{deltabounding} \textit{iv)} instead of \textit{ii)}.
\end{proof}

\subsection{(Pre-)Annulus maps}
\label{sec:amaps}
Now we are set to construct the pre-annulus and annulus maps as sketched in section \ref{sec:outline}.
We handle the cases for theorem \ref{quasiflow} and \ref{quasicomm} simultaneously. 
In order to do so, let us fix some notation for the rest of this section.

Suppose $X$ is a hyperbolic surface, $\lambda = t_1\gamma_1 + \ldots + t_n\gamma_n$ a weighted
multicurve. Let $X' = \gr_{\lambda}X$.
Let $\gamma$ be either one of the $\gamma_i$ or a simple closed geodesic disjoint from
$\lambda$. Denote by $\delta$ either the flat core curve of the grafting cylinder corresponding
to $\gamma=\gamma_i$ or the (image of the) curve $\gamma$ on $X'$ respectively.

Let $\gamma'$ be the hyperbolic geodesic on $X'$ in the free homotopy class of $\gamma$. Denote
the boundary curves of the standard hyperbolic collar around $\gamma'$ by $\alpha^1$ and
$\alpha^2$. (also cf. figure \ref{fig:collar} and \ref{fig:amaps})
Suppose the length of $\gamma$ is short enough to ensure that the bounding annulus on $X'$ is
contained in this standard hyperbolic collar around $\gamma'$.

Using this notation, we have
\begin{lemma}[pre-annulus maps]
\label{preannulus}
There are maps $\phi^j$ ($j=1,2$) with the following properties
   \begin{enumerate}[i)]
   \item $\phi^j$ is defined on the (closed) annulus bounded by $\alpha^j$ and $\gamma'$
   \item $\phi^j$ is an homeomorphism onto the annulus bounded by $\alpha^j$ and $\delta$.
   \item $\phi^j$ restricted to $\alpha^j$ is the identity.
   \item $\phi^j$ is quasiconformal, with quasiconformality constant $K$ satisfying
     $$log(K) \leq C\cdot l_X(\gamma)^{1/4}$$
     for some universal constant $C$.
   \end{enumerate}
\end{lemma}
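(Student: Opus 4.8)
The plan is to build $\phi^j$ as a \emph{scaling map} between two round annuli in the sense of Section~\ref{sec:sst}. First I would realise both annuli in play as subannuli of the annular cover of $X'$ associated to $\gamma'$. Lift the standard hyperbolic collar around $\gamma'$ to the regular $M(l')$-neighbourhood of the imaginary axis in $\HH^2$ (with $l' = l_{X'}(\gamma')$), so that $\gamma'$ lifts to the imaginary axis and the collar boundaries $\alpha^1,\alpha^2$ to the two rays at angles $\pm\theta(l')$. In logarithmic coordinates, placing $\alpha^j$ at $t=0$, the annulus $A^j$ bounded by $\alpha^j$ and $\gamma'$ has $\gamma'$ at $t=a$ with $a=\theta(l')/l'$. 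By Lemma~\ref{deltabounding}~\textit{ii)} (respectively \textit{iv)} in the splitting case) the curve $\delta$ lies in the $R$-tube around $\gamma'$, so its lift is trapped between the boundary curves $c^1,c^2$ of the bounding annulus, which sit at angles $\pm\psi(R)$ from $\gamma'$. In particular $\delta$ still separates $\alpha^1$ from $\alpha^2$, the annulus $B^j$ bounded by $\alpha^j$ and $\delta$ again has $\alpha^j$ as its $t=0$ boundary, and since $B^j$ is nested between the round annuli bounded by $\alpha^j$ and $c^1$, resp.\ by $\alpha^j$ and $c^2$, monotonicity of the modulus gives
$$\frac{\theta(l')-\psi(R)}{l'}\;\leq\; b^j := \Mod(B^j)\;\leq\;\frac{\theta(l')+\psi(R)}{l'}.$$

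Next I would set $\phi^j := s_{A^j,B^j}$, which in these coordinates is $(t,x)\mapsto\left(\frac{b^j}{a}t,\,x\right)$ and, via the uniformisation of finite annuli recalled at the start of Section~\ref{sec:sst}, transports to a quasiconformal homeomorphism of the genuine annuli with the same dilatation. It is a homeomorphism of the closed annuli, equals the identity on the circle $t=0$ (that is, on $\alpha^j$), and sends $\gamma'$ (the circle $t=a$) onto $\delta$ (the circle $t=b^j$), so properties \textit{i)}--\textit{iii)} hold. Its quasiconformality constant is $K(\phi^j)=\max(a,b^j)/\min(a,b^j)$.

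It then remains to prove \textit{iv)}. Since $\theta(l')^2-\psi(R)^2\leq\theta(l')^2$ one has $\max(a,b^j)/\min(a,b^j)\leq \theta(l')/(\theta(l')-\psi(R))$. Now Proposition~\ref{psi_u} gives $\psi(R)\leq R$, Lemma~\ref{deltabounding}~\textit{ii)} gives $R\leq K_2\, l^{1/4}$ with $l = l_X(\gamma)$, while $l'\leq l$ together with the monotonicity of $\theta$ and Proposition~\ref{theta_u} gives $\theta(l')\geq\theta(l)\geq(\pi-l)/2$, which is bounded below by a positive constant for short $\gamma$. Hence $\psi(R)/\theta(l')\leq C'l^{1/4}$ for a constant $C'$, so $K(\phi^j)\leq (1-C'l^{1/4})^{-1}$ and, using $-\log(1-y)\leq 2y$ for small $y\geq 0$,
$$\log K(\phi^j)\;\leq\;-\log(1-C'l^{1/4})\;\leq\;2C'\,l^{1/4},$$
which is the asserted bound with $C=2C'$. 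The constant $K_2$, and hence $C$, can be taken independent of $X$, $\lambda$ and $\gamma$ once all the lengths are bounded above, exactly as in the proof of Lemma~\ref{deltabounding}.

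There is no genuine obstacle here: the statement is essentially a repackaging of the length and tube estimates of Lemma~\ref{deltabounding} (compare also Lemma~\ref{deltamod}). The only point needing care is the set-up ensuring that $A^j$ and $B^j$ truly share the boundary $\alpha^j$ at $t=0$ --- equivalently, that $\delta$ sits inside the standard collar around $\gamma'$, which is exactly the short-curve hypothesis combined with Lemma~\ref{deltabounding}~\textit{ii)}/\textit{iv)} --- after which the modulus comparison, and with it the exponent $1/4$ appearing in \textit{iv)}, is immediate from the bound $R\leq K_2\,l^{1/4}$.
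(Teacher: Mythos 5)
Your construction fails property \textit{iii)}, and this is not a small technicality — it is precisely the difficulty the paper's proof is built around. You define $\phi^j$ as the scaling $s_{A^j,B^j}$ and say it ``transports'' via the uniformisation of $B^j$ to a round annulus, preserving dilatation. Fine so far. But the annulus $B^j$ bounded by $\alpha^j$ and $\delta$ is not round (the obstruction is $\delta$, not $\alpha^j$), so the transport means: pick a conformal $u:B^j\to\hat B^j$ onto a round annulus and set $\phi^j = u^{-1}\circ s_{A^j,\hat B^j}$. The scaling $s_{A^j,\hat B^j}$ is the identity on the $t=0$ circle, but $u^{-1}$ is not the identity on $\alpha^j$. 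Indeed a conformal map of an annulus that restricts to the identity on a boundary circle extends by Schwarz reflection to the identity, which is impossible unless $B^j$ is itself round. So $\phi^j|_{\alpha^j}=u^{-1}|_{\alpha^j}$ is a nontrivial quasisymmetric circle homeomorphism, and property \textit{iii)} fails. Since the whole point of \textit{iii)} is to let the two pre-annulus maps glue against the identity on the complement of the standard collar, this breaks the downstream argument entirely.

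This gap is exactly why the paper's $\phi^1$ is not $f^{-1}\circ s_{A,B}$ but $f^{-1}\circ S_f\circ s_{A,B}$: the shearing $S_f$ realises the boundary homeomorphism $f|_{S^1}$ before $f^{-1}$ is applied, so the two cancel on $\alpha^1$ and the composite is genuinely the identity there. Bounding the dilatation of that shearing — which requires Lemma~\ref{distortion} to control the bilipschitz constant of $f$ on the boundary circle in terms of moduli — is the real technical content of the lemma; your proposal omits it. Your modulus estimates for the scaling part (essentially Lemma~\ref{deltamod}) are fine but do not speak to the boundary distortion of the uniformising map. A small secondary point: the bound $\max(a,b^j)/\min(a,b^j)\le\theta(l')/(\theta(l')-\psi(R))$ is not quite right since $b^j$ may exceed $a$; the correct bound is $(\theta(l')+\psi(R))/(\theta(l')-\psi(R))$, which still has the right order $1+O(l^{1/4})$, so this does not affect the exponent.
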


\begin{proof}
  To describe the construction of the map, we look at the hyperbolic annular cover of $X'$ corresponding to $\gamma'$ (as depicted in figure \ref{fig:collar}).
The annular cover is the unique holomorphic covering map $p:X'_{\gamma'} \to X'$ with $\pi_1(X'_{\gamma'}) = \ZZ$ and $\im(p_*(\pi_1(X'_{\gamma'}))) = \left<[\gamma']\right> \subset \pi_1(X')$.

As $X'_{\gamma'}$ is an annulus, it can be (biholomorpically) embedded into
$\CC$ as a round annulus. This embedding yields coordinates for the hyperbolic collar
neighbourhood which are well-suited for our construction. In these
coordinates both $\alpha^1, \alpha^2$ and $\gamma'$ correspond to round
circles, which (by slight abuse of notation) we will also denote by the same
symbol.

  Normalize, such that $\alpha^1$ becomes the unit circle $S^1 \subset \CC$. Then $\gamma' = r\cdot S^1$ for some $r<1$.  

  Denote by $f$ the biholomorphic map sending the annulus bounded by $S^1$ and $\delta$ to a round annulus with $S^1$ as outer boundary circle and $s\cdot S^1$ as inner. By Schwarz reflection, $f$ extends to a map of the boundary curves (which are analytic) and without loss of generality, we can assume that $f(S^1)=S^1, f(1)=1$.
  \begin{figure}[htbp!]
    \centering
    \includegraphics[width=\textwidth]{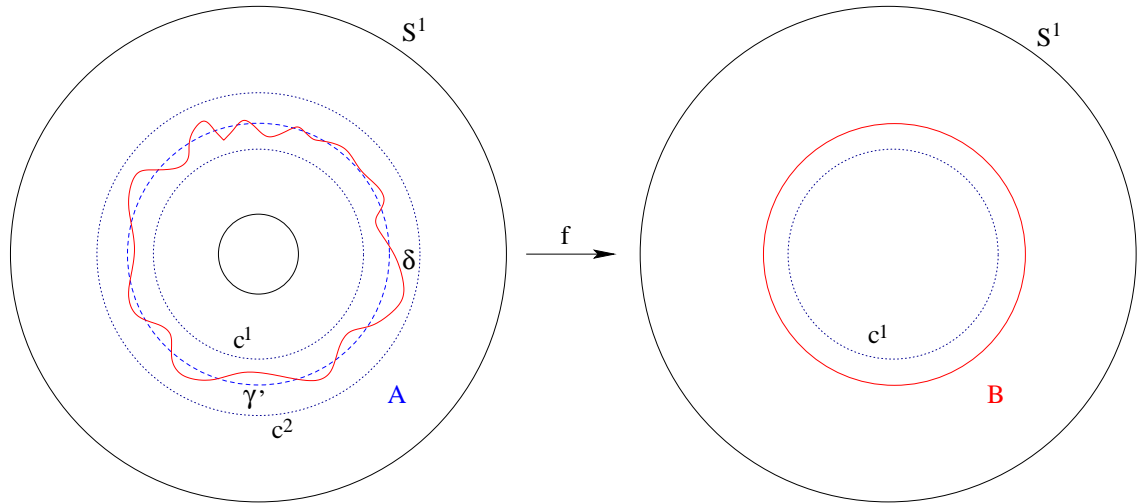}
    \caption{Constructing the annulus map}
    \label{fig:amaps} 
  \end{figure}

Denote the annulus bounded by the unit circle and $\gamma' = r\cdot S^1$ (on the left hand side of figure \ref{fig:amaps}) by $A$ and the one bounded by the unit circle and $f(\delta)=s\cdot S^1$ (on the right hand side) by $B$.

The annulus map will be defined as a composition of a scaling and a shearing (cf. section \ref{sec:sst}), namely
$$\phi^1 = f^{-1}\circ S_{f}\circ s_{A,B}$$
Here (by abuse of notation) $f$ also denotes the lift of $\left.f\right|_{S^1}$ to logarithmic coordinates, and the shearing map is taken with respect to the outer boundary. Note that $f$ is increasing (in logarithmic coordinates), as it is orientation preserving (as a biholomorphic map $B \to f(B)$). By our normalization it also fixes 0.

By construction, this map will satisfy conditions \textit{i)} to \textit{iii)}. By the propositions from section \ref{sec:sst} the quasiconformality constants depend on the quotient of the moduli of $A$ and $B$ (for the scaling part) and the Bilipschitz constant of $f$ in angular coordinates (for the shearing).

Now note that
$$\frac{\Mod(C^2)}{\Mod(C^1)} \leq \frac{\Mod(B)}{\Mod(A)} \leq \frac{\Mod(C^1)}{\Mod(C^2)}$$
where $C^i$ are the annuli bounded by the unit circle and $c^i$ (cf. figure \ref{fig:amaps})

On the other hand, $f$ maps $C^2$ into $C^1$, so by lemma \ref{distortion} (Controlling boundary distortion), $f$ is Lipschitz on $S^1$ with maximal dilatation $\Mod(C^1)/\Mod(C^2)$. $f^{-1}$ maps $B$ into $C^1$, so the same lemma yields that $f^{-1}$ is Lipschitz with dilatation $\Mod(C^1)/\Mod(B) \leq \Mod(C^1)/\Mod(C^2)$. So in fact, $f$ is Bilipschitz with this constant.

The scaling dilation satisfies
$$\log(K(S_{A,B})) \leq \log\left(\frac{\Mod(C^1)}{\Mod(C^2)}\right) \leq B-1$$
for $B = \Mod(C^2)/\Mod(C^1)$. The constant for the shearing satisfies (proposition \ref{shearing} (shearing maps))
$$\log(K(S_f)) \leq (\mathrm{const.})(B-1)$$
Using the previous estimates (lemmas \ref{deltamod} (Modulus of bounding annuli) and \ref{deltabounding} (bounding lemma)) we find for the appropriate constant $R$
$$B = \frac{\Mod(C^1)}{\Mod(C^2)} \leq \frac{\theta(l)+R}{\theta(l)-R}
= 1+\frac{2R}{\theta(l)-R} \leq 1+(\mathrm{const.})l^{1/4}$$
which yields the desired result.
\end{proof}
The two pre-annulus maps $\phi^1, \phi^2$ constucted in the proof of the previous lemma glue with the identity on the
complement of the standard collar to a map
$$(\gr_{\lambda} X) \setminus \gamma' \to (\gr_{\lambda} X)\setminus \delta$$
of the same quasiconformality constant (here, by $Z \setminus \gamma$ we denote the Riemann surface
with boundary obtained by cutting $Z$ at $\gamma$)
In order to extend these maps to the corresponding grafting rays, we have to modify them to
have a compatible behaviour in sending $\gamma'$ to $\delta$.

Consider first the situation of theorem \ref{quasiflow}. To obtain $\gr_{\eta}\gr_{\lambda}X$
from $\gr_{\lambda}$, we have to glue in flat cylinders of height $s_i$ and circumference
$l(\gamma_i')$ at the hyperbolic geodesics $\gamma'_i$ (with matching length parameters).
To obtain $\gr_{\eta\widetilde{+}\lambda}X$ from $\gr_{\lambda}$, we glue in euclidean cylinders at the
flat core curves $\delta_i$ of the already inserted grafting cylinders (again, with matching length
parameters \textit{in the flat metric of the grafting cylinders})

Thus, in the case $\gamma = \gamma_i$, we want to modify the pre-annulus map $\phi^j$ such that it
sends the curve $\gamma'$ parametrized by $S^1$ in constant speed \textit{in hyperbolic coordinates}
to the curve $\delta$ parametrized by $S^1$ in constant speed \textit{in the flat metric of the
  already inserted grafting cylinder}.

Similarly, in the situation of theorem \ref{quasicomm}, we obtain $\gr_\eta\gr_\lambda X$ from
$\gr_\lambda X$ by inserting flat cylinders at the geodesic representatives of $\eta$ on
$\gr_\lambda X$, whereas to obtain $\gr_{\eta+\lambda}X$ we need to insert them at the ``old''
geodesic representatives of $\eta$ given by the hyperbolic metric on $X$. Hence, in this case, we
want to modify the pre-annulus map to send $\gamma'$ parametrized in constant hyperbolic speed
\textit{with respect to $\gr_\lambda X$} to $\gamma$ parametrized in constant hyperbolic speed
\textit{with respect to $X$}.

In both cases, we call parametrizations of $\gamma, \gamma'$ and $\delta$ by $S^1$ with constant
speed in the respective metrics the \textit{natural parametrizations}.
Using this terminology, we have
\begin{lemma}[annulus maps]
 \label{annulusmaps}
 There are maps $\Phi^j$ ($j=1,2$) with the following properties:
  \begin{enumerate}[i)]
  \item $\Phi^j$ is defined on the (closed) annulus bounded by $\alpha^j$ and $\gamma'$
  \item $\Phi^j$ is a homeomorphism onto the annulus bounded by $\alpha^j$ and $\delta$.
  \item $\Phi^j$ restricted to $\alpha^j$ is the identity.
  \item $\Phi^j$ maps $\gamma'$ in its
    natural parametrization to $\delta$ in its natural parametrization
  \item $\Phi^j$ is quasiconformal, with quasiconformality constant $K$ satisfying
    $$log(K) \leq C\cdot l(\gamma)^{1/4}$$
    for some universal constant $C$.
  \end{enumerate}
\end{lemma}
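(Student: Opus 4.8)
The plan is to obtain $\Phi^j$ from the pre-annulus map $\phi^j$ of Lemma~\ref{preannulus} by precomposing it with a single shearing map (Proposition~\ref{shearing}) that repairs the parametrization $\phi^j$ induces on $\gamma'$; then properties \textit{i)}--\textit{iii)} and \textit{v)} are essentially automatic, and the whole content is condition \textit{iv)}. To make the needed correction explicit, let $h_{\gamma'}\colon S^1\to\gamma'$ and $h_\delta\colon S^1\to\delta$ denote the natural parametrizations and set
$$g^j := h_\delta^{-1}\circ\phi^j|_{\gamma'}\circ h_{\gamma'}\colon S^1\to S^1;$$
condition \textit{iv)} says exactly that $g^j=\id$ (a leftover rotation would be harmless, since natural parametrizations are only defined up to rotation and any such rotation is what the twisting step of Section~\ref{sec:twist} later removes). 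Using the explicit formula $\phi^j=f^{-1}\circ S_f\circ s_{A,B}$ from the proof of Lemma~\ref{preannulus}, I would write $g^j$ as a composition of two kinds of circle maps: (a) identifications between a natural parametrization and an angular coordinate of a round annulus, and (b) boundary values of biholomorphisms between nested round annuli lying inside the annular cover $X'_{\gamma'}$. The maps of type (a) are \emph{isometries} of $S^1$ for the angular metric: the complete hyperbolic metric of the round annulus $X'_{\gamma'}$ is rotationally invariant with $\gamma'$ as its concentric core geodesic, while the flat metric of the already--inserted grafting cylinder (in the situation of Theorem~\ref{quasiflow}), respectively the hyperbolic metric of the annular cover of $X$ along $\gamma$ (in the situation of Theorem~\ref{quasicomm}), is rotationally invariant with $\delta$ as its concentric core geodesic. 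Hence all distortion of $g^j$ comes from the maps of type (b).

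Next I would bound those maps. Each of them is a uniformizing biholomorphism of one nested annulus onto another, and Lemma~\ref{distortion} (controlling boundary distortion) bounds its restriction to the boundary, in the angular metric, by the ratio of the two moduli. The annuli that occur all interpolate between $\alpha^j$, $\gamma'$, $\delta$ and the boundaries of the grafting cylinder; by Lemma~\ref{deltabounding} (bounding lemma) and Lemma~\ref{deltamod} (modulus of bounding annuli) the relevant ratios of moduli are $1+(\mathrm{const.})\,l_X(\gamma)^{1/4}$, precisely because $\gamma'$ and $\delta$ lie deep inside large--modulus collars, so the bounding annuli occupy only an $O(l_X(\gamma)^{1/4})$ fraction of them. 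Consequently every map of type (b), and its inverse, is bilipschitz with constant $1+(\mathrm{const.})\,l_X(\gamma)^{1/4}$, so that
$$g^j\ \text{is}\ B\text{-bilipschitz with}\ \log B\le C\cdot l_X(\gamma)^{1/4},$$
in particular $B<2$ once $\gamma$ is short enough. Moreover $g^j$ is increasing (all its factors are orientation preserving) and smooth, being built from smooth parametrizations and boundary values of biholomorphisms of annuli with analytic boundary, extended by Schwarz reflection exactly as in the proof of Lemma~\ref{distortion}.

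Finally, let $A^j$ be the annulus bounded by $\alpha^j$ and $\gamma'$; its modulus is comparable to $\theta(l_{X'}(\gamma'))/l_{X'}(\gamma')$ and hence exceeds $1$ for $\gamma$ short. Applying Proposition~\ref{shearing}, in the form in which the roles of the two boundary circles are exchanged so that $\alpha^j$ is the fixed one, to the map $(g^j)^{-1}$ written in the angular coordinate, we obtain a quasiconformal self--homeomorphism $S^j$ of $A^j$ that fixes $\alpha^j$, restricts on $\gamma'$ to $(g^j)^{-1}$ in natural coordinates, and satisfies $\log K(S^j)\le C'(B-1)\le C''\,l_X(\gamma)^{1/4}$. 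Set $\Phi^j:=\phi^j\circ S^j$. Then \textit{i)}, \textit{ii)} are inherited from $\phi^j$; \textit{iii)} holds because both factors fix $\alpha^j$; \textit{iv)} holds because on $\gamma'$, in natural coordinates, $\Phi^j$ restricts to $g^j\circ(g^j)^{-1}=\id$; and \textit{v)} follows from $K(\Phi^j)\le K(\phi^j)\,K(S^j)$ together with Lemma~\ref{preannulus}\,\textit{iv)} and the above bound on $K(S^j)$, giving $\log K(\Phi^j)\le C\,l_X(\gamma)^{1/4}$ (note $l_{X'}(\gamma')\le l_X(\gamma)$, so phrasing everything through $l_X(\gamma)$ loses nothing). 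The main obstacle is the middle step: keeping track of the several distinct natural structures carried by $\gamma'$ and $\delta$ --- the hyperbolic metric of $X'=\gr_\lambda X$, the Euclidean metric of the inserted grafting cylinder (or the hyperbolic metric of $X$), and the conformal coordinates of $X'_{\gamma'}$ used to build $\phi^j$ --- and checking that passing between them costs only an $O(l_X(\gamma)^{1/4})$ bilipschitz factor, which is forced by all the curves in question being trapped in large--modulus annuli where the boundary--distortion estimate of Lemma~\ref{distortion} is sharp.
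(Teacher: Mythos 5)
Your construction is essentially the paper's: start from $\phi^j$, identify the only source of distortion on $\gamma'$ as the map the paper calls $F$ (your ``type (b)'' maps are exactly $F$, since your ``type (a)'' identifications are isometries because the hyperbolic annular cover metric and the flat grafting-cylinder metric are each rotationally invariant with $\gamma'$, resp.\ $\delta$, as concentric core), bound its boundary bilipschitz constant by $1+O(l^{1/4})$ via Lemma~\ref{distortion} together with the moduli estimates of Lemmas~\ref{deltabounding} and~\ref{deltamod}, and correct it by a shear of Proposition~\ref{shearing}. The only substantive thing you defer is what the paper isolates as Proposition~\ref{Fdistortion}: the annuli feeding into Lemma~\ref{distortion} are not all round, so one must pass to round subannuli losing a universal $\kappa$ in modulus and then run a small case analysis; you correctly flag this as the crux and correctly state the resulting $1+O(l^{1/4})$ bound, but do not carry out the bookkeeping. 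Your other divergences --- precomposing with $S^j$ rather than postcomposing, and normalizing the shear's fixed point by adjusting the phase of the natural parametrization rather than by inserting a small twist as the paper does (a twist whose cost is anyway $O(l)$) --- are cosmetic and both work.
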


\begin{proof}
\begin{figure}[htpb!]
  \centering
  \includegraphics[width=\textwidth]{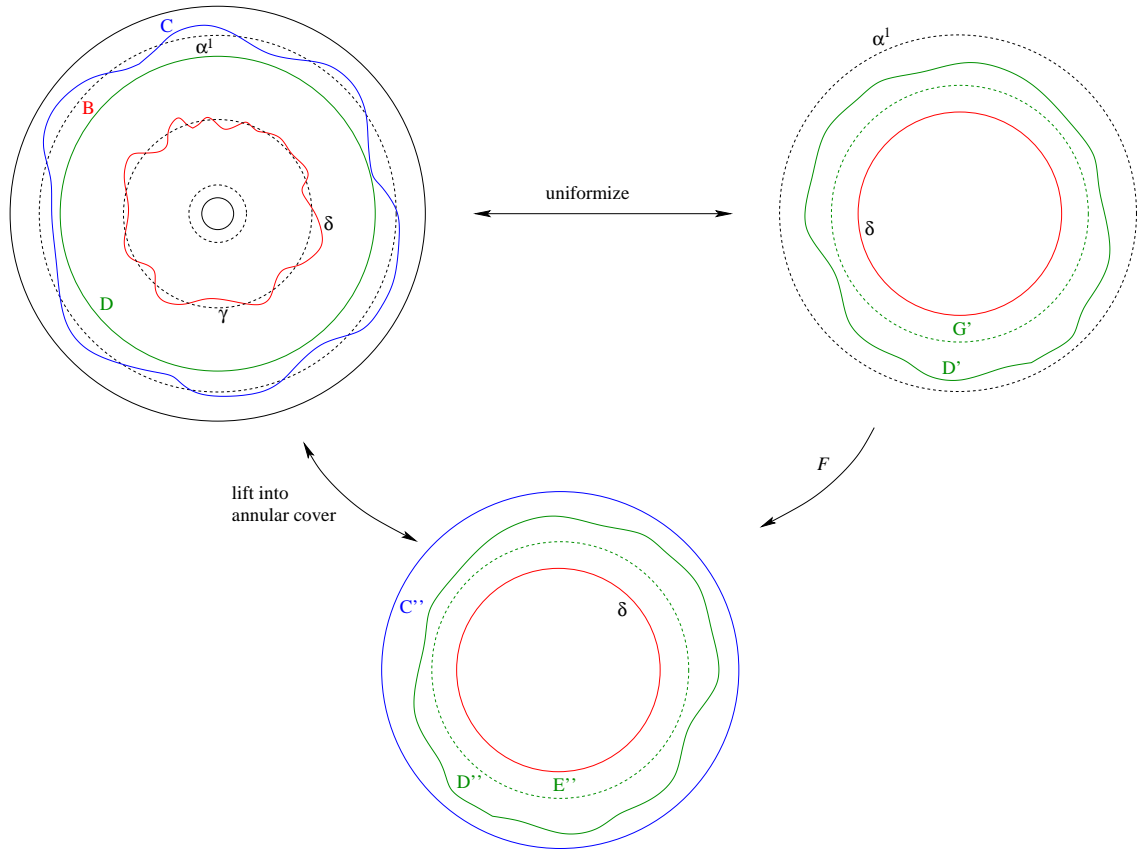}
  \caption{Comparison of the hyperbolic annular cover (upper left corner) to the 
natural charts around $\delta$ (on the bottom). All labelled annuli have $\delta$ as their inner boundary}
  \label{fig:unitspeed}
\end{figure}

Consider the setting as depicted in figure \ref{fig:unitspeed} and recall the notation from the
proof of lemma \ref{preannulus}. In addition to the hyperbolic annular cover and the uniformization
of $B$, we now need suitable charts for a cylinder $C''$ around $\delta$, in which the natural
parametrization has an easy description.

In the situation where $\gamma$ is disjoint from $\lambda$, one can simply use the hyperbolic collar
neighbourhood of $\gamma$ on $X$ as $C''$ and obtain charts by biholomorphically embedding this as a
round annulus into $\CC$ (also see the proof of lemma \ref{preannulus}).

If $\gamma=\gamma_i$, one uses the \textit{extended grafting half-cylinder} as $C''$ -- that is the
annulus bounded by $\delta$ and the ``old'' boundary curves for the standard hyperbolic collar
around $\gamma$ on $X$.

To obtain charts, note that $C''$ is (projectively) of the form
$$C'' = S^\theta_t/\left<z \mapsto e^{l(\gamma)}\right>, \quad\quad S^\theta_t  = \{r\cdot e^{i\varphi}, r > 0, \pi/2-\theta \leq \varphi \leq \pi/2+\theta+t \} \subset \CC^*$$
where $\theta=\theta(l)$ is the angle corresponding to the standard hyperbolic collar on $X$.  (see
the proof of lemma \ref{deltabounding} and \cite{KD07} or \cite{McMullen:1998mz} for more details)
This cylinder carries a natural flat metric realizing it as $C'' = \gamma \times [-\theta,
t+\theta]$ such that $\gamma\times[0,t] \subset C''$ is exactly the natural flat metric on the
grafting cylinder.

It also has an embedding $C'' \to \CC$ (using the exponential function) such that closed
geodesics (of the flat metric) parametrized in unit speed correspond to round circles in $\CC$
parametrized in constant angular speed.

As $C''$ is an annulus on the surface with core curve homotopic to $\gamma'$, we
can \textit{biholomorphically} lift it into the hyperbolic annular cover (though not necessarily
into the collar) -- denote this lift by $C$.

The pre-annulus map $\phi^j$ sends the hyperbolic geodesic $\gamma'$ in natural parametrization to
$f(\delta)$ parametrized by constant speed (in the uniformizing chart) as it is the composition of a
scaling and a shearing along $\alpha^j$ (which fixes $f(\delta)$), then sends it back using
$f^{-1}$.

Thus, the distortion we have to compensate is the distortion of $F$ (cf. figure \ref{fig:unitspeed})
on the inner boundary circle, where $F$ is the composition of $f^{-1}$ and the inverse of the lift
map $C'' \to C$. Note that $F$ is only defined in some neighbourhood of $\delta$.
 
We would now like to postcompose the pre-annulus maps with shearing maps undoing the distortion of $F$.

However, to apply the estimate from proposition \ref{shearing} (shearing maps), the shear parameter
has to fix a point on the boundary circle.

To ensure this here, we have to apply a twist map of $B$ first, with a twisting amount of less than
1. Using the estimates for twist maps (proposition \ref{twistmaps}) and noting that $\Mod(B) \geq
T/l$ for some constant $T$ we see

$$\log K = \frac{2}{\sqrt{1+4\Mod(B)^2}-1} \leq \frac{2}{2\Mod(B) - 1} \leq \frac{2l}{2T-l}$$
which is less than a constant times $l$.

As the logarithims of quasiconformality constants behave additively under composition, we are done
once we show that the unshearing map satisfies the desired dilatation bound.
This however, follows using lemma \ref{shearing} and the following proposition.
\begin{prop}[distortion of $F$]
  \label{Fdistortion}
  The restriction of $F$ to the inner boundary circle (in above context) is $L$-bilipschitz with respect to the angular
  metric, where
  $$L \leq (\mathrm{const.})\cdot l^{1/4}+1$$
\end{prop}
\textit{Proof.}$\;$ Let $l = l_X(\gamma)$, $l' = l_{\gr_{\lambda}X}(\gamma')$. Denote by $A$ the standard hyperbolic
half-collar on $\gr_{\lambda}X$ around $\gamma'$. Then we have $\Mod(A) =
\theta(l(\gamma'))/l(\gamma')$.  Let $B$ (as before) be the annulus bounded by $\alpha^1$ and
$\delta$. By lemma \ref{deltamod} (moduli of bounding annulus) we know for the appropriate $R$
  $$\frac{\theta(l') + R}{l'} = \Mod(C^1) \geq \Mod(B) \geq \Mod(C^2) \geq \frac{\theta(l') - R}{l'}$$
  The modulus of the extended grafting cylinder can be obtained by just adding the modulus of the
  collar and the grafting cylinder (see the proof of lemma \ref{deltabounding}). Thus, in the case
  where $\gamma = \gamma_i$, we have
$$\Mod(C) = \Mod(C'') = \frac{t/2 + \theta(l)}{l}$$
If on the other hand $\gamma$ is disjoint from $\lambda$ and thus $C$ is the ``old'' collar
neighbourhood of $\gamma$, we have
$$\Mod(C) = \frac{\theta(l)}{l}$$
It is well-known that there is an universal constant $\kappa$, such that any annulus of modulus $M$
in $\CC$ contains a round subannulus of modulus $\geq M - \kappa$ (cf. e.g. \cite[Chapter
2]{McMullen:1994uo}) as long as $M$ is large enough (and as $l$ is small we can always assume this
here).

We now constuct another annulus $D$, distinguishing two cases: if $\alpha^1$ is inside $C$, we just
set $D = B$. Otherwise let $D$ be the maximal subannulus in $C$ having a round outer boundary and
$\delta$ as inner boundary.  In that case, we have $\Mod(D) \geq \Mod(C) - \kappa$. In both cases,
denote the image of $D$ under the uniformizing map by $D'$, the preimage under the lift-map by
$D''$.

The last two annuli we need are the corresponding maximal round subannuli: $E'' \subset D''$ and $G'
\subset D'$.
Now we are set to use lemma \ref{distortion} (controlling boundary distortion)

$F$ maps $G'$ holomorphically into $D$, then into $C''$. Thus on $\delta$ it is Lipschitz with dilatation
$$\frac{\Mod(C'')}{\Mod(G')} \leq \frac{\Mod(C)}{\Mod(D)-\kappa} \leq \frac{\Mod(C)}{\Mod(B)-\kappa} \quad\mathrm{or}\quad \frac{\Mod(C)}{\Mod(C)-2\kappa}$$
depending how $D$ was defined (see above). Similarly, the inverse mapping $F^{-1}$ sends $E''$ into
$B$, thus its Lipschitz constant on $\delta$ is
$$\frac{\Mod(B)}{\Mod(E'')} \leq \frac{\Mod(B)}{\Mod(D)-\kappa} \leq \frac{\Mod(B)}{\Mod(B)-\kappa} \quad\mathrm{or}\quad \frac{\Mod(B)}{\Mod(C)-2\kappa}$$
Obviously there are two types of expressions we have to estimate. Let us start with 
$$L = \frac{\Mod(C)}{\Mod(C)-2\kappa} = 1 + \frac{4\kappa}{\Mod(C)-2\kappa}$$
As $\Mod(C) \geq (\mathrm{const.})/l$, $L-1$ in this case is actually smaller than a constant times $l$. 
Similarly, as 
$$\Mod(B) \geq \frac{\theta(l')-R(l)}{l'} \geq \frac{\mathrm{const.}}{l}$$
we can handle the third expression.

For the other two, we use the estimates for $\theta$ and $R$ we have obtained before and $\theta(l) \leq \pi/2$. Consider first the case $\gamma = \gamma_i$. Then 
$$L = \frac{\Mod(C)}{\Mod(B)-\kappa} \leq \frac{ \frac{t/2+\theta(l)}{l}  }{ \frac{\theta(l')-R}{l'}  - \kappa}
= \frac{\frac{l'}{l}(t/2 + \theta(l))}{\theta(l')-R-\kappa l'}
\leq\frac{\frac{\pi}{\pi+t}\cdot\frac{t+\pi}{2}}{\frac{\pi}{2}-\frac{l'}{2}-R-\kappa l'} $$
Thus $L-1$ is smaller than
$$L-1 \leq \frac{(\frac{1}{2}+\kappa)l'+R}{\frac{1}{2}\pi -(\frac{1}{2}+\kappa)l' -R}$$

The nominator is smaller as $\mathrm{(const.)}\cdot l^{1/4}$, and the denominator is larger than a
constant (for small $l$). This yields the claim.
Next, consider the case where
$$L = \frac{\Mod(B)}{\Mod(C)-2\kappa} \leq \frac{ \frac{\theta(l')+R}{l'}  }{ \frac{t/2+\theta(l)}{l} -2\kappa  } 
\leq \frac{ \theta(l')+R }{ \frac{l'}{l}(t/2+\theta(l)) -2\kappa l'  }$$
Recall that (cf. lemma \ref{deltabounding})
$$\frac{l'}{l} \geq K_1\frac{2\theta}{2\theta+t},\quad\quad \frac{l'}{l}\frac{2\theta+t}{2} \geq K_1\theta \geq \frac{1}{1+l}\theta \geq \frac{1}{1+l}\cdot\frac{\pi-l}{2}$$
Thus we can further estimate
$$L \leq  \frac{(\theta + R)(1+l)}{\frac{\pi-l}{2} -2\kappa l'(1+l)}
\leq \frac{\frac{\pi}{2} + R + l\theta + l\cdot R}{\frac{\pi}{2} - \frac{l}{2} - 2\kappa l'(1+l)}$$
So at the end, we obtain the estimate
$$L-1 \leq \frac{R + l\theta + l\cdot R + l/2 + 2\kappa l'(1+l)}{\frac{\pi}{2} - \frac{l}{2} - 2\kappa l'(1+l)}$$
Again, the denominator is larger than some constant, while the nominator can be estimated as less
than a contant times $l^{1/4}$.

It remains to show the estimates in the case where $\gamma$ is disjoint from $\lambda$. Here we have
$$L = \frac{\Mod(C)}{\Mod(B)-\kappa} \leq \frac{ \frac{\theta(l)}{l}  }{ \frac{\theta(l')-R}{l'}  - \kappa} = \frac{\frac{l'}{l}\theta}{\theta-R-l'\kappa} \leq \frac{\theta}{\theta-R-l'\kappa}$$
and thus 
$$L-1 \leq \frac{R+l'\kappa}{\theta-R-l'\kappa}$$
Again, the denominator is larger than some constant, while the nominator can be estimated by a constant times $l^{1/4}$. The final case is 
$$L = \frac{\Mod(B)}{\Mod(C)-2\kappa} \leq \frac{ \frac{\theta(l')+R}{l'}  }{ \frac{\theta(l)}{l} -2\kappa  } 
\leq \frac{\frac{l}{l'}(\theta(l')+R) }{ \theta(l) -2\kappa l' } \leq \frac{\frac{1}{K}(\theta(l')+R) }{ \theta(l) -2\kappa l' } \leq  \frac{(1+l)(\frac{\pi}{2}+R) }{ \frac{\pi-l}{2} -2\kappa l' }$$
Thus, 
$$L-1 \leq \frac{\pi l + R + R\cdot l + 2\kappa l'}{\frac{\pi-l}{2} - 2\kappa l'}$$
which as above yields the claim. This finishes the proof of proposition \ref{Fdistortion} and lemma \ref{annulusmaps}.
\end{proof}

\subsection{Controlling the twist}
\label{sec:twist}
As a last step, we have to bound -- and compensate -- the twist induced by the annulus
maps $\Phi^j$.
Twist may be introduced by two different sources.

First, there is the twist created by our basic maps: scaling does not create any twist, and shearing
induces twists of amount strictly less than one.
As we use a definite, finite number of basic maps to construct the annulus maps, we do not care
about these twists -- compare the proof of lemma \ref{annulusmaps} to see that the error bounds for
an untwisting of a fixed amount is of the right magnitude.

The other source for twist is the uniformizing map $f$ of the annulus $B$ (recall the construction in
lemma \ref{preannulus} (pre-annulus maps) and compare figure \ref{fig:twist})
\begin{figure}[htbp!]
  \centering
  \includegraphics[width=\textwidth]{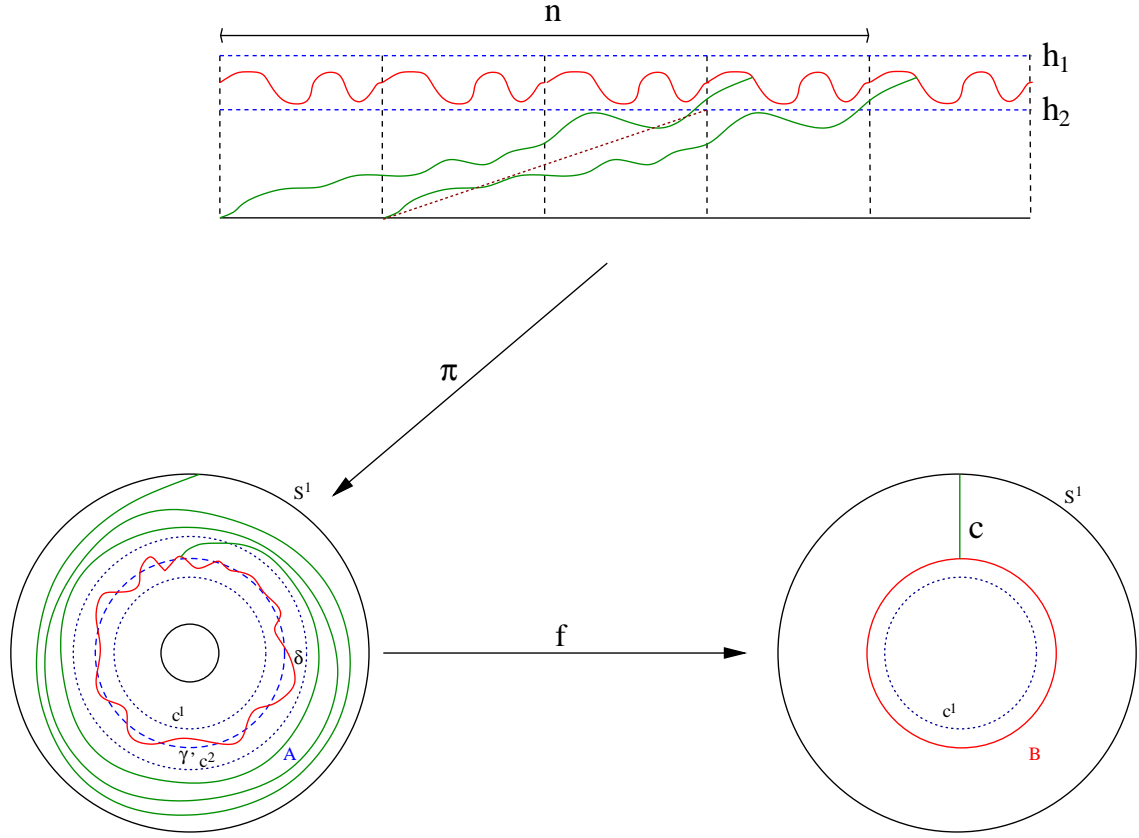}
  \caption{Controlling the twist of the uniformizing map $f$}
  \label{fig:twist}
\end{figure}
To estimate this twist, we use logarithmic coordinates for the annulus $A$. 
Consider a straight arc $c$ in the (uniformized) annulus $B$ (right hand side of figure \ref{fig:twist}).
The fact that $f$ induces a twist of amount $n$ is equivalent to saying that lift of $f^{-1}$
traverses $n$ fundamental domains.

We can use this formulation to prove
\begin{lemma}[Twist bound for uniformizing map]
  \label{twistbound}
  In the context of the proof of lemma \ref{preannulus}, the map $f$ induces a twist of less than $n$, where
  $$(n-2)^2 \leq \Mod(C^1)^2 - \Mod(C^2)^2$$
\end{lemma}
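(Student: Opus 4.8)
The plan is to convert the statement about fundamental domains into a winding--number estimate for the image of a straight arc, and then to bound that winding using the two--sided modulus control on $D$, where $D$ denotes the annulus bounded by $\alpha^1$ and $\delta$ in the hyperbolic annular cover of $\gr_\lambda X$ associated to $\gamma'$ (so $f\colon D\to B=\{s<|z|<1\}$ is the uniformizing map, normalized by $f(S^1)=S^1$, $f(1)=1$, as in the proof of lemma \ref{preannulus}).

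First I would make the reduction. In the annular cover, realized as a round annulus with $\alpha^1=S^1$, write $c^1=r_1S^1$ and $c^2=r_2S^1$ with $r_1<r<r_2<1$, so that $2\pi\Mod(C^1)=-\log r_1$ and $2\pi\Mod(C^2)=-\log r_2$. Because of the inclusions $C^2\subset D\subset C^1$ (which hold since $\delta$ lies in the bounding annulus $\{r_1<|z|<r_2\}$) and the conformal invariance of the modulus, one has $\Mod(C^2)\le\Mod(D)=\Mod(B)\le\Mod(C^1)$, hence $r_1\le s\le r_2$; compare lemma \ref{deltamod}. Now take the straight radial arc $c(\tau)=\tau$, $\tau\in[s,1]$, in $\overline B$ and set $\beta=f^{-1}(c)$, an arc in $\overline D$ running from $1\in S^1$ to the point $f^{-1}(s)\in\delta$. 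By the characterization recalled just before the lemma, the twist induced by $f$ equals, up to a harmless bounded additive error which the ``$-2$'' will absorb, the winding number $N$ of $\beta$ about the core $\gamma'$, i.e.\ about the origin. So it is enough to show $N\le 2+\sqrt{\Mod(C^1)^2-\Mod(C^2)^2}$.

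Next I would estimate $N$ in logarithmic coordinates. The arc $\beta$ joins a point of log--modulus $0$ to a point of log--modulus $\log|f^{-1}(s)|<\log r_2=-2\pi\Mod(C^2)$, so its endpoints differ by at least $2\pi\Mod(C^2)$ in the radial direction while they differ by $2\pi N$ in the periodic direction; hence the length of $\beta$ in the ambient flat metric $|dz|/|z|$ of the annular cover is at least $\sqrt{(2\pi\Mod(C^2))^2+(2\pi N)^2}$, by the ``helix'' (straight--line) bound. On the other hand $\beta$ is the $f$--image of the extremal radial segment $c$, so in the flat cylinder metric conformally equivalent to the round model $B$ it has length exactly $2\pi\Mod(B)\le 2\pi\Mod(C^1)$; transporting this to the ambient flat metric by localizing the comparison of the two metrics to the round subannuli $C^2\subset D\subset C^1$ (straightening them via the universal--constant round subannulus theorem used earlier) bounds the ambient flat length of $\beta$ by $2\pi\Mod(C^1)$ at the cost of at most two fundamental domains. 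Combining, $(2\pi(N-2))^2+(2\pi\Mod(C^2))^2\le(2\pi\Mod(C^1))^2$, which is the assertion.

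The main obstacle is precisely this last comparison. The two flat metrics on $D$ --- the one from the round uniformization and the ambient one --- carry the same modulus but need not be comparable pointwise along $\beta$ (the eccentricity of $\delta$ is controlled only in the averaged, modulus sense by $\Mod(C^1)-\Mod(C^2)$), so a crude sup--bound on their ratio is far too lossy to produce the sharp quantity $\Mod(C^1)^2-\Mod(C^2)^2$. One must therefore argue through round subannuli, or equivalently through the single--valued holomorphic function $\Phi(w)=\log(f^{-1}(e^w))-w$ on the round model cylinder, whose real part is pinned in a strip of width $2\pi(\Mod(C^1)-\Mod(C^2))$ and whose imaginary part on the inner boundary circle equals $2\pi N$; it is the boundary behaviour of this $\Phi$ that has to be controlled, and extracting the exact constant ``$2$'' from that control is the delicate point. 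The radial estimate and the helix inequality are then routine.
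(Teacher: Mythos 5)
Your setup is right and shares the paper's key ideas: pass to the straight radial arc $c$ in the uniformized annulus $B$, identify the twist with the winding number of $\beta=f^{-1}(c)$, invoke the moduli sandwich $\Mod(C^2)\le\Mod(B)\le\Mod(C^1)$, and use the Euclidean ``helix'' lower bound in logarithmic coordinates. But the upper bound you need --- that the \emph{ambient} flat length of $\beta$ is at most $2\pi\Mod(C^1)$ up to two fundamental domains --- is precisely the step you cannot justify, and you say so yourself. What conformal invariance gives you is the length of $\beta$ in the pullback metric $f^{*}(|dz|/|z|)$ from $B$, namely $2\pi\Mod(B)$. That metric and the ambient flat metric of the annular cover agree in total modulus but are not pointwise comparable on $D$, especially near the wiggly curve $\delta$, so the helix lower bound and your upper bound live in different conformal metrics and cannot be combined. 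The gesture toward ``localizing through round subannuli'' or controlling the boundary behaviour of $\Phi(w)=\log(f^{-1}(e^w))-w$ is a plan, not a proof; as written the crucial inequality is asserted, and the proposal is incomplete.

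The paper avoids the metric comparison entirely by phrasing the same geometric picture through extremal length. Cutting $B$ along $c$ gives a quadrilateral $Q$ with $\Mod(Q)=\Mod(B)$, equal to the extremal length of the family $\Gamma$ of arcs joining the two circle boundaries; conformal invariance makes $Q'=f^{-1}(Q)=D\setminus\beta$ a quadrilateral with the same modulus, so $\mathrm{ext}(\Gamma')=\Mod(B)\le\Mod(C^1)$. Now the ambient flat metric on $C^1$ is used only as a \emph{test metric} for $\Gamma'$: every arc of $\Gamma'$ must cross at least $n-2$ horizontal fundamental segments (because $Q'$ spirals) and climb a height at least $h_2=\Mod(C^2)$, so it has length at least $\sqrt{(n-2)^2+h_2^2}$, while the total area is at most $h_1=\Mod(C^1)$. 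The length-squared-over-area estimate then yields $\mathrm{ext}(\Gamma')\ge\bigl((n-2)^2+h_2^2\bigr)/h_1$, and combining the two bounds gives the claim. The decisive difference from your proposal is that extremal length only requires a \emph{one-sided} test-metric lower bound plus conformal invariance and monotonicity of moduli; it never compares two conformal metrics along a single curve, which is exactly what fails in your attempt. In effect, the paper measures arcs dual to $\beta$ rather than $\beta$ itself, and that is what closes the gap you identified.
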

\begin{proof}
  Consider the quadrilateral $Q$ bounded by $c$, $S^1$ and $f(\delta)$ in $B$. As $c$ is a straight
  arc in the round annulus $B$, we can label the sides of $Q$ such that
  $$\Mod(Q) = \Mod(B)$$
  As $f$ is biholomorphic on $B$, the inverse image $Q' = f^{-1}(Q)$ is a quadrilateral of the same
  modulus. Furthermore, the modulus of $Q$ (and $Q'$) is the extremal length of the family $\Gamma$
  of curves connecting the top and bottom side of $Q$ (or $Q'$) with respect to the chosen
  labelling. Thus
  $$\mathrm{ext}(\Gamma) = \Mod(B) \leq \Mod(C_1).$$

  Now consider the metric on $Q'$ induced by the euclidean metric on $C^1$ (that is, in logarithmic coordinates).
  Any arc in $Q'$ connecting the bottom and top side than has to traverse at least $n-2$ horizontal
  segments (of length 1), and at least a height of $h_2$. Thus, the euclidean length $l$ of any such
  arc satisfies
  $$l^2 \geq (n-2)^2 + h_2^2$$
  The area of $B$ in this metric satisfies
  $$\mathrm{area}(B) \leq h_1\cdot 1$$
  Thus, by definition, we know that the extremal length of the family $\Gamma$  satisfies
  $$\mathrm{ext}(\Gamma) \geq \frac{(n-2)^2 + h_2^2}{h_1}$$
  Using the estimate above, we see
  $$\frac{(n-2)^2 + h_2^2}{h_1} \leq \Mod(C^1)$$
  and, recalling that $h_i = \Mod(C^i)$,
  $$(n-2)^2 \leq \Mod(C^1)^2 - \Mod(C^2)^2$$
\end{proof}
So we need to compensate a twist of at most
$$\sqrt{\Mod(C^1)^2 - \Mod(C^2)^2}+2$$
Again, a constant number of twists yields an error which is of the right magnitude ($\leq
C\sqrt{l}$), so we only have to worry about the first part.
To do so, we first estimate
$$\left(\frac{\Mod(B)}{n} \right)^2 \geq \frac{\Mod(C^2)^2}{\Mod(C^1)^2 - \Mod(C^2)^2} 
= \frac{1}{L-1}$$ where $L = (\Mod(C^1)/\Mod(C^2))^2$. Now we plug this into proposition
\ref{twistmaps} (twist maps), to obtain that the quasiconformality constant of the untwisting map
satisfies
$$\log(K) \leq \frac{2}{\sqrt{1+4\frac{1}{L-1}}-1}$$
Using lemma \ref{deltamod} and the estimate  $R \leq T\cdot l^{1/4}$ for the appropriate $R$ (and some constant $T$), as well as $\theta(l) \geq \pi/2-l/2$ we compute
$$L \leq \left(\frac{\theta(l')+R}{\theta(l')-R} \right)^2 \leq 
\left(\frac{\pi/2-l'/2+T\cdot l^{1/4}}{\pi/2-l'/2-T\cdot l^{1/4}}\right)^2
=\left(1+ \frac{2T\cdot l^{1/4}}{\pi/2- l'/2 - T\cdot l^{1/4}} \right)^2$$
Furthermore, for small $l$, we can estimate this to be (for some contants $k, k'$)
$$\leq (1+k\cdot l^{1/4})^2 \leq 1 + k'\cdot l^{1/4}.$$
Thus,
$$\sqrt{1+4\frac{1}{L-1}} \geq \sqrt{1+\frac{4}{k'\cdot l^{1/4}}} \geq \frac{2}{\sqrt{k'}\cdot l^{1/8}}$$
This gives
$$\frac{2}{\sqrt{1+4\frac{1}{L-1}}-1} \leq \frac{2}{\frac{2}{\sqrt{k'}\cdot l^{1/8}}-1}
=  \frac{2\cdot l^{1/8}}{\frac{2}{\sqrt{k'}}-l^{1/8}} \leq C\cdot l^{1/8}.$$
Summarizing this calculation, we see
\begin{lemma}[Quasiconformality of untwist map]
  The quasiconformality constant $K$ of the untwist map satisfies
  $$\log K \leq C\cdot l^{1/8}$$
  for small $l$ and a universal constant $C$.
\end{lemma}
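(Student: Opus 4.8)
The plan is to assemble the bound from the three pieces already established: the twist estimate of Lemma~\ref{twistbound}, the quasiconformality estimate for twist maps in Proposition~\ref{twistmaps}, and the modulus comparison of Lemma~\ref{deltamod}. By Lemma~\ref{twistbound} the uniformizing map $f$ introduces a twist of at most $n$ with $(n-2)^2 \leq \Mod(C^1)^2 - \Mod(C^2)^2$, so the untwisting map has to realize a twist of at most $k+2$ where $k = \sqrt{\Mod(C^1)^2 - \Mod(C^2)^2}$. First I would split this into a twist by the bounded amount $2$ and a twist by $k$. Since $\Mod(B) \geq \Mod(C^2) \geq \mathrm{const.}/l$, Proposition~\ref{twistmaps} applied with a bounded twist amount gives $\log K \leq \mathrm{const.}\cdot l$ for that factor, which is well within the target; so the whole problem reduces to controlling the twist by $k$.

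Next I would feed the twist by $k$ into Proposition~\ref{twistmaps}, taking the annulus to be $B$ of modulus $a = \Mod(B)$. This yields $\log K \leq 2/\bigl(\sqrt{1 + 4(a/k)^2} - 1\bigr)$, and I would lower-bound $(a/k)^2$ using $\Mod(B) \geq \Mod(C^2)$:
$$\left(\frac{a}{k}\right)^2 = \frac{\Mod(B)^2}{\Mod(C^1)^2 - \Mod(C^2)^2} \geq \frac{\Mod(C^2)^2}{\Mod(C^1)^2 - \Mod(C^2)^2} = \frac{1}{L-1},$$
where $L = \bigl(\Mod(C^1)/\Mod(C^2)\bigr)^2$. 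Hence $\log K \leq 2/\bigl(\sqrt{1 + 4/(L-1)} - 1\bigr)$, and it remains to estimate $L$.

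For this I would invoke Lemma~\ref{deltamod}, which gives $\Mod(C^1)/\Mod(C^2) \leq (\theta(l') + R)/(\theta(l') - R)$ with $R \leq T\cdot l^{1/4}$, together with $\theta(l') \geq (\pi - l')/2$ from Proposition~\ref{theta_u}. Plugging these in gives $L \leq \bigl(1 + 2Tl^{1/4}/(\pi/2 - l'/2 - Tl^{1/4})\bigr)^2 \leq 1 + k'\cdot l^{1/4}$ for small $l$. Then $4/(L-1) \geq 4/(k'l^{1/4})$, so $\sqrt{1 + 4/(L-1)} \geq 2/(\sqrt{k'}\,l^{1/8})$, and therefore
$$\log K \leq \frac{2}{\frac{2}{\sqrt{k'}\,l^{1/8}} - 1} = \frac{2\,l^{1/8}}{\frac{2}{\sqrt{k'}} - l^{1/8}} \leq C\cdot l^{1/8}$$
for $l$ small, which together with the $O(l)$ contribution from the bounded twist in the first step gives the claim.

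The delicate point — and the reason the exponent is $1/8$ rather than $1/4$ — is that the gap $\Mod(C^1)^2 - \Mod(C^2)^2$ is controlled only through $L - 1$, which Lemma~\ref{deltamod} bounds by $O(l^{1/4})$; since the twist amount enters the twist-map estimate essentially through its square root, this halves the available exponent. Some care is also needed to verify the orientation of the boundary circles so that $\Mod(C^2) \leq \Mod(B) \leq \Mod(C^1)$ holds as stated, and to use $\theta(l')$ (not $\theta(l)$) in the comparison, invoking the monotonicity $\theta(l') \geq \theta(l)$ only where it is actually helpful.
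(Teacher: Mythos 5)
Your proposal is correct and follows essentially the same route as the paper: you split off the bounded twist, lower-bound $(\Mod(B)/k)^2$ by $\Mod(C^2)^2/(\Mod(C^1)^2 - \Mod(C^2)^2) = 1/(L-1)$, feed this into Proposition~\ref{twistmaps}, and control $L-1$ via Lemma~\ref{deltamod} together with the estimate $\theta \geq (\pi-l)/2$ and $R \leq T\,l^{1/4}$, yielding $\log K \lesssim l^{1/8}$. The only cosmetic difference is that you handle the bounded twist more sharply (obtaining $O(l)$ rather than the paper's cited $O(\sqrt{l})$), but both are well within the target, and your closing remark about the $1/4 \to 1/8$ exponent loss captures exactly why the square root of the modulus gap costs half the exponent.
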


\begin{lemma}[comparison maps]
  \label{compmaps}
  There is a comparison map $\Psi: X \to X'$ satisfying
  \begin{enumerate}[i)]
  \item $\Psi$ sends $\gamma'$ in natural parametrization to $\delta$ in natural parametrization.
  \item $\Psi$ preserves the marking on $X'$.
  \item $\Psi$ is quasiconformal with dilatation $K$, and
    $$\log K \leq C \cdot l_{X}(\gamma)^{1/8}$$
    for some universal constant $C$.
  \end{enumerate}
\end{lemma}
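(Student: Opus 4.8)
The plan is to assemble $\Psi$ from the annulus maps of Lemma~\ref{annulusmaps} and then repair the marking with a single twist correction, tracking dilatations throughout.

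First I would glue $\Phi^1$ and $\Phi^2$. By Lemma~\ref{annulusmaps} each $\Phi^j$ is the identity on $\alpha^j$ and carries $\gamma'$ in its natural parametrization to $\delta$ in its natural parametrization; in particular $\Phi^1$ and $\Phi^2$ restrict to one and the same homeomorphism $\gamma'\to\delta$, so they glue along $\gamma'$ to a homeomorphism of the standard hyperbolic collar $N$ of $\gamma'$ onto the sub-annulus of $X'$ bounded by $\alpha^1$ and $\alpha^2$. Since $\delta$ lies in $N$ and is homotopic to its core (Lemma~\ref{deltabounding}), this sub-annulus is $N$ itself, now with $\delta$ in the role of core curve, and the glued map is the identity on $\partial N=\alpha^1\cup\alpha^2$. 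Extending it by the identity off $N$ yields a homeomorphism $\Psi_0$ of $X'$ which fixes $\partial N$ pointwise, satisfies (i), and -- as the identity is conformal -- has $\log K(\Psi_0)\le C\, l_X(\gamma)^{1/4}$ by the dilatation bound in Lemma~\ref{annulusmaps}.

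Next I address (ii). As $\Psi_0$ is the identity outside $N$ and fixes $\partial N$ pointwise, it is isotopic to the identity on $X'$ precisely when $\Psi_0|_N$ lies in the trivial class of the mapping class group of the annulus $N$ rel $\partial N$, i.e. has vanishing Dehn-twist number $k$ about $\gamma'$. Following section~\ref{sec:twist}, this twist has only two sources: the fixed finite collection of shearing and auxiliary twist maps entering the construction of the $\Phi^j$, each twisting by strictly less than one, and the uniformizing map $f$ of the annulus $B$, whose twist is at most $\sqrt{\Mod(C^1)^2-\Mod(C^2)^2}+2$ by Lemma~\ref{twistbound}; hence $|k|\le\sqrt{\Mod(C^1)^2-\Mod(C^2)^2}+(\mathrm{const.})$. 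I would then set $\Psi=T_{-k}\circ\Psi_0$, where $T_{-k}$ is the twist map of Proposition~\ref{twistmaps} applied to $B$ with twisting amount $-k$. Since an integer twist is the identity on both boundary circles of $B$ (in logarithmic coordinates), $\Psi$ still fixes $\partial N$ pointwise and still restricts on $\gamma'$ to the natural reparametrization, so (i) persists, while $\Psi|_N$ now has twist number $0$ and is therefore isotopic rel $\partial N$ to the identity. Thus $\Psi$ is isotopic to the identity on $X'$ and preserves the marking, which is (ii).

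Finally, for (iii) I use that logarithms of quasiconformality constants add under composition, so $\log K(\Psi)\le\log K(\Psi_0)+\log K(T_{-k})$. The first term is $\le C\, l_X(\gamma)^{1/4}$; for the second I invoke the preceding lemma on the quasiconformality of the untwist map, $\log K(T_{-k})\le C\, l_X(\gamma)^{1/8}$, which rests on the fact that $\Mod(C^1)$ and $\Mod(C^2)$ differ only by a factor $1+O(l_X(\gamma)^{1/4})$ while each is of order $1/l_X(\gamma)$ (Lemma~\ref{deltamod}), so $|k|$ is of order $l_X(\gamma)^{-7/8}$ whereas $\Mod(B)$ is of order $1/l_X(\gamma)$, making the ratio entering Proposition~\ref{twistmaps} of order $l_X(\gamma)^{1/8}$. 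Summing and using $l_X(\gamma)^{1/4}\le l_X(\gamma)^{1/8}$ for $l_X(\gamma)$ small gives $\log K(\Psi)\le C'\, l_X(\gamma)^{1/8}$ with $C'$ universal. The main obstacle is exactly this twist accounting: one must identify the Dehn twist as the sole obstruction to isotoping $\Psi_0$ to the identity, bound it via Lemma~\ref{twistbound}, and then verify that undoing a twist which may be as large as $\sim l_X(\gamma)^{-7/8}$ nevertheless costs only $O(l_X(\gamma)^{1/8})$ in dilatation because it is performed inside an annulus of the much larger modulus $\sim 1/l_X(\gamma)$ -- and it is this balance that produces the exponent $1/8$ in the statement.
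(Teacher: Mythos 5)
Your proposal is correct and takes essentially the same route as the paper: compose the glued annulus maps $\Phi^1,\Phi^2$ with the untwist map to restore the marking, and combine the dilatation bounds $O(l^{1/4})$ for the annulus maps with $O(l^{1/8})$ for the untwist. The paper's own proof is only a two-sentence pointer to the preceding section; your write-up supplies precisely the gluing, isotopy-class, and exponent bookkeeping that the paper leaves implicit, including the key balance that $|k|\sim l^{-7/8}$ against $\Mod(B)\sim l^{-1}$ yields the $l^{1/8}$ cost.
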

\begin{proof}
  We compose the annulus maps from section \ref{sec:amaps} with the necessary untwist maps to obtain property \textit{ii)}. By the lemma above, property \textit{iii)} is satisfied.
\end{proof}

\subsection{Finishing the proof}
\label{sec:finish}

We are now equipped to prove the theorems stated in section \ref{sec:poqft}.
\begin{proof}[Proof of the iteration theorem]
  Let us assume all $\gamma_i$ are short enough to apply the estimates of the preceding sections. This defines $\widetilde{\epsilon}$. The constant $C$ is given by the universal constant from corollary \ref{compmaps}.

   Consider the multicurve
$$\Gamma = \left(s_1\cdot \frac{l_X(\gamma_1)}{l_{\gr_{\lambda}X}(\gamma_1)}+t_1\right)\gamma_1 + \ldots + \left(s_n\cdot \frac{l_X(\gamma_n)}{l_{\gr_{\lambda}X}(\gamma_n)}+t_n\right)\gamma_n.$$
  Note that $\gr_{\Gamma}X$ can be obtained from $X'$ by inserting a flat cylinder of circumference $l = l_X(\gamma_i)$ and
  height $\frac{l_X(\gamma_i)}{l_{X'}(\gamma_i)}\cdot s_i$ at the flat core curve $\delta_i$ of the already inserted grafting cylinder (see section \ref{sec:amaps}).

 On the other hand, $\gr_{\eta}X'$ is obtained from $X'$ by cutting at $\gamma'_i$ and gluing in a flat annulus of height $s_i$ and circumference $l_{X'}(\gamma'_i)$. 
  By the construction of $\Gamma$, the moduli of the glued in cylinders are equal.

 Now we use the comparison maps constructed in lemma \ref{compmaps}. As all the constructions took place in the 
 hyperbolic collar around $\gamma_i'$, the comparison maps for all $\gamma_i'$ can be combined to a single map
 $\Phi:X \to X'$, which has the properties stated in lemma \ref{compmaps} for each $\gamma_i$ simultaneously.

 Because of property \textit{i)} of the comparison maps and the fact that the moduli of the glued in cylinders are equal they extend to a map
  $$\gr_{\eta}(\gr_{\lambda}X) \to \gr_{\Gamma}X$$
  of the same quasiconformality constant -- which is of the right magnitude (property \textit{iii)}). As the marking is preserved (property \textit{ii)}), this yields the desired bound on Teichmüller distance.

  It remains to show that $\gr_{\Gamma}X$ and $\gr_{\eta\widetilde{+}\lambda}X$ are close to each other. As both are obtained 
  as a grafting on the same support, it is enough to compare the heights of the respective grafting cylinders.
  But by lemma \ref{deltabounding} we know
  $$\frac{\pi + t_i}{\pi} \leq \frac{l_X(\gamma_i)}{l_{\gr_{\lambda}X}(\gamma_i)} \leq K^{-1}\frac{2\theta + t_i}{2\theta}.$$
  Thus, we need to estimate
  $$\frac{s_i\cdot K^{-1}\frac{2\theta + t_i}{2\theta} + t_i}{s_i\cdot \frac{\pi + t_i}{\pi} + t_i}
  = \frac{\frac{s_i}{t_i}\cdot K^{-1}\frac{2\theta + t_i}{2\theta} + 1}{\frac{s_i}{t_i}\cdot \frac{\pi + t_i}{\pi} + 1}
  \leq \frac{K^{-1}\frac{2\theta + t_i}{2\theta}}{\frac{\pi + t_i}{\pi}}$$
  as $(1+x)/(1+y) \leq x/y$ for $x\geq y$. But is is just
  $$K^{-1} \frac{\pi}{2\theta} \frac{2\theta + t_i}{\pi + t_i} \leq K^{-1} \frac{\pi}{2\theta}
  \leq (1+l_i)\frac{\pi}{\pi-l_i} \leq 1 + \mathrm{const}\cdot l$$
  Thus, the logarithm of the quasiconformality constant of the rescaling map is smaller than some constant times
  $l$ -- from which follows the claim.
\end{proof}

\begin{proof}[Proof of the seperation theorem]
Using the argument of the proof above, we find that
$$d_\teich(\gr_\eta\gr_\lambda X, \gr_{\Gamma}X) \leq C\cdot\left(\max_{i=1,\ldots, m}l_X(\gamma_i) \right)^{1/8}$$
where
$$\Gamma = \eta + \frac{l_X(\gamma_{n+1})}{l_{\gr_\lambda X}(\gamma_{n+1})}\cdot t_{n+1}\gamma_{n+1} + \ldots + \frac{l_X(\gamma_m)}{l_{\gr_\lambda X}(\gamma_m)}\cdot t_m\gamma_m$$
However, by lemma \ref{deltabounding} \textit{iii)} we know that 
$$1 \leq \frac{l_X(\gamma_{n+i})}{l_{\gr_\eta X}(\gamma_{n+i})} \leq \frac{1}{K_1} \leq 1+l_X(\gamma_{n+i})$$
Hence, if we build a quasiconformal map $\gr_{\gamma}X \to \gr_{\eta+\lambda}X$ as above
which just rescales the cylinders from height $\frac{l_X(\gamma_{n+i})}{l_{\gr_\lambda
    X}(\gamma_{n+i})}\cdot t_{n+i}$ to height $t_{n+i}$, its quasiconformality constant will be less
than $1+\max(l_X(\gamma_{n+1}))$. This yields the claim.
\end{proof}

\section{Holonomy lifts and grafting rays}
\label{sec:holonomylifts}

We now turn to the results on holonomy lifts of Teichmüller space sketched in the introduction. 
Recall that the slices $\teich_\lambda(S)$ for integral $\lambda$ are exactly the projective 
structures having Fuchsian holonomy by Goldman's theorem. Also note that there are two natural
parametrizations of the slice $\teich_\lambda$: one can use the geometric coordinate $\Gr_\lambda(X) = Z \mapsto X$
and the conformal coordinate $\Gr_\lambda(X) = Z \mapsto \pi(Z)$.
The difference between these coordinates is measured by the conformal grafting map $\gr_\lambda:\teich(S) \to \teich(S)$.
\begin{theorem}[Holonomy lifts of grafting rays]
  \label{sec:holon-lifts-graft:main1}
  Let $X$ be a hyperbolic surface and $\lambda$ be a short integral lamination on $X$ (i.e. all curves are shorter than the universal constant from theorem \ref{quasiflow}).
  \begin{enumerate}[i)]
  \item There is a $r>0$, such that for each $n$ the holonomy lift 
    $$g_n(s) = \gr_{n\lambda}\left( \gr_{s\lambda}X \right)$$
    is contained in the $r$-tube around the grafting ray $s \mapsto \gr_{s\lambda}X$.
   \item There is a $R>0$ such that the following holds. Let $\eta$ 
     be an short integral lamination, disjoint from $\lambda$. Then the holonomy lifts
     $$g_\eta(s) = \gr_{\eta}\left(\gr_{s\lambda}X\right)$$
     are contained in the $R$-tube around the grafting ray $\gr_{s\lambda}(\gr_{\eta}X)$ for each $\eta$ as above.
  \end{enumerate}
\end{theorem}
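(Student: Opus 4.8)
The plan is to derive both statements directly from the two main technical results, Theorems~\ref{quasiflow} and~\ref{quasicomm}, by exhibiting for each holonomy lift a point on the relevant grafting ray at a uniformly bounded distance. Write $\lambda = 2\pi m_1\gamma_1 + \cdots + 2\pi m_r\gamma_r$ with $m_i\in\NN$, so that $s\lambda$ has weights $t_i = 2\pi m_i s$ and $n\lambda$ has weights $2\pi m_i n$, and set $m_{\min} = \min_i m_i$, $m_{\max} = \max_i m_i$, $M_0 = m_{\max}/m_{\min}$. Since grafting never increases hyperbolic length (cf.\ Lemma~\ref{deltabounding}), every point $\gr_{s\lambda}X$ of the ray, and every surface $\gr_{\mu}X$ produced below, still has all $\gamma_i$ shorter than $\widetilde{\epsilon}$, so Theorems~\ref{quasiflow} and~\ref{quasicomm} may be applied uniformly along the whole ray.

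Part \textit{ii)} is essentially immediate. The supports of $\eta$ and $s\lambda$ are disjoint, so two applications of the splitting theorem (Theorem~\ref{quasicomm}) give
\[
  d_\teich\bigl(\gr_\eta(\gr_{s\lambda}X),\,\gr_{\eta+s\lambda}X\bigr)\le C\,\widetilde{\epsilon}^{1/8}
  \qquad\text{and}\qquad
  d_\teich\bigl(\gr_{s\lambda}(\gr_\eta X),\,\gr_{s\lambda+\eta}X\bigr)\le C\,\widetilde{\epsilon}^{1/8},
\]
with $C$ the universal constant of Theorem~\ref{quasicomm}; the right-hand sides do not depend on $s$ or on $\eta$, provided all curves of $\lambda$ and $\eta$ have length $\le\widetilde{\epsilon}$ on $X$. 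As $\eta+s\lambda = s\lambda+\eta$, the triangle inequality puts $g_\eta(s)$ within $R := 2C\,\widetilde{\epsilon}^{1/8}$ of the parameter-$s$ point of the ray $s\mapsto\gr_{s\lambda}(\gr_\eta X)$, which is precisely the content of \textit{ii)}.

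For part \textit{i)} I would first apply the iteration theorem (Theorem~\ref{quasiflow}) with base point $X$, inner lamination $s\lambda$ and outer lamination $n\lambda$, obtaining
\[
  d_\teich\bigl(g_n(s),\,\gr_{\mu(n,s)}X\bigr)\le C\,\widetilde{\epsilon}^{1/8},
  \qquad
  \mu(n,s) = \sum_i 2\pi m_i\bigl((n+s)+2m_i ns\bigr)\gamma_i,
\]
again with bound independent of $n$ and $s$. When the $m_i$ are not all equal, $\gr_{\mu(n,s)}X$ itself lies off the grafting ray, so the second step is to compare it with the ray point $\gr_{\sigma\lambda}X$, where $\sigma := (n+s) + 2m_{\min}ns$. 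With this choice one has, for every $i$,
\[
  1\le\frac{\mu(n,s)_i}{2\pi m_i\sigma} = \frac{(n+s)+2m_i ns}{(n+s)+2m_{\min}ns}\le M_0,
\]
a bound that does \emph{not} grow with $n$ or $s$, although $\sigma$ and the weights of $\mu(n,s)$ do.

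The final step turns this bounded ratio into a bounded Teichm\"uller distance by running the iteration theorem \emph{backwards}: take $\lambda_{\mathrm{thm}} = \sigma\lambda$ and $\eta_{\mathrm{thm}} = \sum_i\eta_i\gamma_i$ with $\eta_i := \pi\bigl(\mu(n,s)_i - 2\pi m_i\sigma\bigr)\big/\bigl(\pi + 2\pi m_i\sigma\bigr)\ge 0$; then $\eta_{\mathrm{thm}}\widetilde{+}\lambda_{\mathrm{thm}} = \mu(n,s)$, so Theorem~\ref{quasiflow} places $\gr_{\mu(n,s)}X$ within $C\,\widetilde{\epsilon}^{1/8}$ of $\gr_{\eta_{\mathrm{thm}}}(\gr_{\sigma\lambda}X)$. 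The crucial point is that $\eta_i\le\pi(M_0-1)$ is bounded, so $\eta_{\mathrm{thm}}$ is a grafting of bounded weight; and for a surface $Z$ whose $\gamma_i$ are $\widetilde{\epsilon}$-short one has $d_\teich\bigl(Z,\gr_{\eta_{\mathrm{thm}}}Z\bigr)\le\frac12\log\bigl(1 + \pi(M_0-1)/(\pi-\widetilde{\epsilon})\bigr)$, because the map collapsing the extended grafting cylinders onto the standard hyperbolic collars is quasiconformal and $\theta(l)\ge(\pi-l)/2$ for small $l$ (Proposition~\ref{theta_u}) --- this is the collapsing estimate used in the proof of Lemma~\ref{deltabounding}. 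Chaining the two applications of Theorem~\ref{quasiflow} with this elementary estimate gives $d_\teich\bigl(g_n(s),\gr_{\sigma\lambda}X\bigr)\le r$ for an $r$ depending only on $X$, $\lambda$ and $\widetilde{\epsilon}$, which is \textit{i)}. The main obstacle is exactly this last step: since the translation length of $X\mapsto\gr_{n\lambda}X$ is unbounded in $n$, the comparison point $\gr_{\sigma\lambda}X$ and the auxiliary surface $\gr_{\mu(n,s)}X$ both escape toward the cusp, and one must see that their discrepancy nonetheless stays bounded. Running Theorem~\ref{quasiflow} in reverse is what trades the unbounded difference of grafting weights for a \emph{bounded} amount of extra grafting, whose cost can then be estimated by the cylinder-collapse bound rather than by the scale-sensitive technical theorems; part \textit{ii)} involves no such difficulty because $\eta$ is disjoint from $\lambda$ and the splitting theorem applies on the nose.
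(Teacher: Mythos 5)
Your proposal is correct, and most of it agrees with the paper's reasoning. Part \textit{ii)} is essentially what the paper does: two applications of Theorem~\ref{quasicomm} with base point $X$ plus the triangle inequality (the paper's proof is terse, but this is what it means). Part \textit{i)} starts exactly as in the paper: one application of Theorem~\ref{quasiflow} with base $X$ gives $d_\teich(g_n(s),\gr_{\mu(n,s)}X)\le C\widetilde\epsilon^{1/8}$, and your comparison point $\gr_{\sigma\lambda}X$ with $\sigma=(n+s)+2m_{\min}ns$ coincides with the paper's $\gr_{f(s)\lambda}X$.

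Where you diverge is in bounding $d_\teich(\gr_{\mu(n,s)}X,\gr_{\sigma\lambda}X)$. The paper notes that the two multicurves have identical support and that the ratios of corresponding weights are bounded by $M_0=\max_i t_i/\min_i t_i$, so a single scaling map on each grafting cylinder (the map of Section~3.1) gives a quasiconformal map of dilatation $\le M_0$ directly; this is a one-line estimate. You instead choose $\eta_{\mathrm{thm}}$ so that $\eta_{\mathrm{thm}}\widetilde{+}\sigma\lambda=\mu(n,s)$, re-apply Theorem~\ref{quasiflow} with base $\gr_{\sigma\lambda}X$ (legitimate, since grafting along $\lambda$ shortens the $\gamma_i$, so the hypothesis persists along the ray), check that the weights $\eta_i\le\pi(M_0-1)$ are uniformly bounded, and then control $d_\teich(Z,\gr_{\eta_{\mathrm{thm}}}Z)$ by the cylinder-collapse estimate. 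That chain is correct and gives a uniform bound, but it invokes the heavy iteration theorem a second time where the paper's elementary cylinder-rescaling argument suffices. In exchange your version makes explicit that the discrepancy between $\mu(n,s)$ and $\sigma\lambda$ is a \emph{bounded-weight extra grafting}, which is a conceptually transparent way to see why the constants stay uniform in $n$ and $s$; the paper's route is shorter but conveys the same mechanism via the bounded weight ratio.
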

\begin{proof}
  The first statement follows by using theorem \ref{quasiflow}. In fact, once the length
  of $\lambda$ is shorter than $\widetilde{\epsilon}$, we know that
  $$d_\teich(\gr_{n\lambda}\left( \gr_{s\lambda}X \right), \gr_{(n\lambda)\widetilde{+}(s\lambda)}X) \leq C\cdot \max\left(l_X(\gamma)\right)^{1/8}$$
  To see that $\gr_{(n\lambda)\widetilde{+}(s\lambda)}X$ is close to the grafting ray in direction $\lambda$, let 
  $\lambda = t_1\gamma_1 + \ldots + t_m\gamma_m$ and recall the definition
  $$(n\lambda)\widetilde{+}(s\lambda) = \left(n\cdot t_1\cdot \frac{\pi + t_1\cdot s}{\pi} + s\cdot t_1\right)\gamma_1 + \ldots \left(n\cdot t_m\cdot \frac{\pi + t_m\cdot s}{\pi} + s\cdot t_m\right)\gamma_m$$
  Thus, the weight of the curve $\gamma_i$ is
  $$\left(n\cdot\frac{\pi + t_i\cdot s}{\pi} + s\right)t_i$$
  We now want to rescale the cylinders as in the proof of theorem \ref{quasiflow}. To this end, let $t=\min_i(t_i)$ and define
  $f(s) = n\cdot\frac{\pi+t\cdot s}{\pi} + s$. Then the quotients of the heigths satisfy
  $$\frac{n\cdot\frac{\pi + t_i\cdot s}{\pi} + s}{f(s)} = 
  \frac{n\cdot\frac{\pi + t_i\cdot s}{\pi} + s}{n\cdot\frac{\pi+t\cdot s}{\pi} + s} \leq \frac{\pi + s\cdot t_i}{\pi + s\cdot t}\leq \frac{t_i}{t}$$
  and therefore the quotients of the heights can be estimated from the weights of $\lambda$ alone.
  This however implies, that there is a quasiconformal map
  $$\gr_{(n\lambda)\widetilde{+}(s\lambda)}X \;\to\; \gr_{f(s)\lambda}X$$
  whose dilatation is bounded by $\log\frac{\max_it_i}{\min_it_i}$. This shows the first claim.

  The second claim follows by simply applying theorem \ref{quasicomm} to the situation in \textit{ii)}.
\end{proof}
Using this theorem we obtain the statement about Teichmüller geodesics mentioned in the introduction
\begin{cor}[Holonomy lifts of Teichmüller geodesics]
    There is a $\epsilon>0$ such that the following holds. Let $\delta>0$ and an integral lamination $\lambda$ be given. Consider the set $\,\mathcal{U} \subset \teich(S)$ of all hyperbolic surfaces on which $\lambda$ has length less than $\epsilon$ and each simple closed curve disjoint from $\lambda$ has length at least $\delta$.

  Then there is a $r>0$, such that for each $X \in \mathcal{U}$ and $n \in \NN$ the holonomy lift 
    $$g_n(s) = \gr_{n\lambda}\left( \rho_{\lambda,X}(s) \right)$$
of the Teichmüller geodesic $\rho_{\lambda,X}$ through $X$ in direction $\lambda$ is contained in the $r$-tube around the geodesic $\rho$.
\end{cor}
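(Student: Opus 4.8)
The plan is to reduce the corollary to a \emph{uniform} version of the fellow-travelling theorem of Diaz and Kim (\cite{KD07}). Choose $\epsilon\le\widetilde\epsilon$ small enough that all estimates of Section~\ref{sec:poqft} and of Lemma~\ref{deltabounding} apply, and write $\rho=\rho_{\lambda,X}$ for the Teichm\"uller geodesic ray from $X$ in direction $\lambda$. The statement I would isolate first is: \emph{there is a constant $L_0=L_0(\lambda,\delta)>0$ and a $\delta'=\delta'(\delta)>0$ such that for every hyperbolic surface $Y$ on which $\lambda$ has length $<\epsilon$ and every simple closed curve disjoint from $\lambda$ has length $\ge\delta'$, the grafting ray $t\mapsto\gr_{t\lambda}(Y)$ lies in the $L_0$-tube about the Teichm\"uller geodesic ray issuing from $Y$ in direction $\lambda$.} Granting this, the corollary drops out: for $s\ge0$ the surface $Y=\rho(s)$ still satisfies the hypotheses, since $l_Y(\gamma_i)\le l_X(\gamma_i)<\epsilon$ and along the Jenkins--Strebel geodesic $\rho$ no curve disjoint from $\lambda$ shrinks below a definite multiple of $\delta$ (its extremal length only grows as $\lambda$ is pinched); moreover the geodesic ray from $Y$ in direction $\lambda$ is exactly $\rho|_{[s,\infty)}\subset\rho$, because the Jenkins--Strebel differential for $\lambda$ on $\rho(s)$ is, up to scale, the image of that on $X$ under the Teichm\"uller flow, which rescales all cylinder heights by a common factor. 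Hence the whole grafting ray from $Y$ — in particular its point $g_n(s)=\gr_{n\lambda}(Y)$ — lies in the $L_0$-tube about $\rho$, and this is independent of $n$ and of $X\in\mathcal U$, so one takes $r=L_0$.

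The real content is the uniformity statement, and this is where the hypothesis defining $\mathcal U$ is used. Diaz and Kim give, for a fixed $Y$, a constant $L(Y)$ depending on the geometry of $Y$; one must bound $L(Y)$ as $Y$ ranges over the family $\mathcal V$ of surfaces described above. The point is that $\overline{\mathcal V}$ is compact in the augmented Teichm\"uller space: on a surface of $\mathcal V$ no curve disjoint from $\lambda$ is short, while by the collar lemma any curve crossing some $\gamma_i$ is long (its length is bounded below in terms of $\epsilon$ alone); so the only curves that can degenerate on surfaces of $\mathcal V$ are the components of $\lambda$ themselves, and the only boundary strata met by $\overline{\mathcal V}$ are those in which $\lambda$, and nothing else, is pinched. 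I would then verify that the estimate of \cite{KD07} — equivalently, the explicit length and bounding-annulus estimates of Lemma~\ref{deltabounding}, which involve only the $l_X(\gamma_i)$ and the $\delta$-thickness of the complement — extends continuously to this boundary: all the quantities that appear ($\theta(l)$, $\psi(R)$, the moduli of the annuli $C_i^j$, the distances $R_i$, and so on) have finite limits as $l\to0$. A compactness argument on $\overline{\mathcal V}$ then produces $L_0$.

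Two remarks. First, rather than applying \cite{KD07} at the moving basepoint $\rho(s)$ one could route the argument through Theorem~\ref{sec:holon-lifts-graft:main1}: write $\rho(s)$ as a bounded perturbation of a point $\gr_{\sigma(s)\lambda}X$ of the grafting ray, bound $\gr_{n\lambda}(\gr_{\sigma(s)\lambda}X)$ by part~i) of Theorem~\ref{sec:holon-lifts-graft:main1}, and transport back; but this still needs exactly the same uniform comparison near $\lambda$, so nothing is gained. Second, the argument above yields the tube about the geodesic \emph{ray} from $X$ in direction $\lambda$; the two-sided geodesic is treated by running the same scheme at the basepoints $\rho(s)$ with $s<0$ — on the part where $\lambda$ is still $\epsilon$-short one is again in the situation of the displayed statement, and on the remainder one uses the (larger, but still $\delta$-confined) family of surfaces together with the iteration estimate of Theorem~\ref{quasiflow}. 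The single genuine obstacle throughout is the uniformity statement of the second paragraph, i.e.\ ruling out a blow-up of the fellow-travelling constant as $\lambda$ is pinched; the entire purpose of the hypothesis that every simple closed curve disjoint from $\lambda$ has length $\ge\delta$ is to confine $\mathcal U$, hence $\mathcal V$, to a region of $\teich(S)$ whose only degeneration is the well-understood pinching of $\lambda$.
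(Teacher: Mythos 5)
Your reduction — that $g_n(s)=\gr_{n\lambda}(\rho(s))$ lies on the grafting ray from $\rho(s)$, while the Teichm\"uller geodesic ray from $\rho(s)$ in direction $\lambda$ is just the tail $\rho|_{[s,\infty)}$, so a \emph{uniform} Diaz--Kim fellow-travelling constant would finish the proof in one line — is a clean observation and is genuinely different from the paper's route. The paper does not apply Diaz--Kim at the moving basepoint $\rho(s)$; it applies it once at the fixed $X$, and then handles the rest by a three-term triangle inequality, bounding the middle term by Theorem~\ref{sec:holon-lifts-graft:main1} (which rests on the iteration theorem~\ref{quasiflow}) and the first term by Minsky's product region theorem, with the twist controlled by Lemma~\ref{twistbound}. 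This careful decomposition is precisely how the paper manufactures a uniform constant without ever needing Diaz--Kim uniformly.

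The gap in your proposal is in the uniformity argument, which you correctly identify as ``the real content'' but then dispatch too quickly. Two concrete problems. First, $\overline{\mathcal V}$ is \emph{not} compact in the augmented Teichm\"uller space: a lower bound on lengths of curves disjoint from $\lambda$ and an upper bound on the lengths of $\lambda$ still leave the geometry of $S\setminus\lambda$ free to wander (a curve disjoint from $\lambda$ can become arbitrarily long, and one can act by the mapping class group of $S\setminus\lambda$). Compactness holds only modulo $\Mod(S\setminus\lambda)$, which is why the paper ends by invoking cocompactness of this group on $\Pi_0(\mathcal U)$; your argument would need the same quotient, and then you must also argue that the Diaz--Kim constant descends to a well-defined function there. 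Second, and more seriously, the continuity claim — that the Diaz--Kim fellow-travelling constant $L(Y)$ ``extends continuously'' to the boundary stratum where $\lambda$ is pinched — is exactly what needs to be proved, and it does not follow from citing Lemma~\ref{deltabounding}: that lemma compares $\delta_i$ with $\gamma_i'$ after a single grafting, whereas the constant $L(Y)$ in \cite[Theorem~4.3]{KD07} compares an entire grafting ray with a geodesic ray and is produced there as an existence statement depending on $Y$, not as an explicit formula continuous in $Y$. Tracing through that dependence and showing it stays bounded as $l_Y(\lambda)\to0$ amounts to rebuilding the estimates the paper assembles (Minsky's product region together with the quantitative length, bounding-annulus, and twist lemmas); so the compactness framing does not shortcut the work, it merely relocates it.
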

\begin{proof}
  Let $X \in \mathcal{U}$ be given. We first show that there is a $r = r_X > 0$ that fulfills the claim each ray starting in $X$.
  Using the triangle inequality it suffices to bound
  $$d_\teich\left(\gr_{n\lambda}\rho_{\lambda,X}(s), \gr_{n\lambda}\gr_{s'\lambda}(X) \right)+d_\teich\left( \gr_{n\lambda}\gr_{s'\lambda}(X), \gr_{t'\lambda}(X) \right)+d_\teich\left(\gr_{t'\lambda}(X), \rho_{\lambda,X}(t) \right)$$
By a theorem of Diaz and Kim \cite[theorem 4.3]{KD07} there is a $R$ such that for each $t$ there is a $t'$ with
$$d_\teich(\gr_{t'\lambda}X, \rho_{\lambda, X}(t)) < R$$
By the preceding theorem, the middle term is bounded by some uniform constant (if $s'$ is chosen appropriate to $t'$). 

Thus, it remains to estimate the first term. To do so, we use Minsky's product region theorem \cite[theorem 6.1]{Minsky:1996xe}. Note that along the grafting ray $t \mapsto \gr_{t\lambda}Y$ the length of each curve disjoint from $\lambda$ has length bounded from below and above.
This is due to the fact that grafting along $\lambda$ decreases the length of curves disjoint from $\lambda$ (see \cite[theorem 3.1]{McMullen:1998mz}). By the collar lemma this also implies that no curve disjoint from $\lambda$ can become too short. The same is true for the Teichmüller geodesic $\rho_{\lambda,X}$. 
Hence, the projections $\Pi_0(\gr_{n\lambda}\rho_{\lambda,X}(s))$ and $\Pi_0(\gr_{n\lambda}\gr_{s'\lambda}X)$ (in the notation of \cite[theorem 6.1]{Minsky:1996xe}) are contained in a compact subset of $\teich(S\setminus\lambda)$.

It remains to show that $\Pi_i(\gr_{n\lambda}\rho_{\lambda,X}(s))$ and $\Pi_i(\gr_{n\lambda}\gr_{s'\lambda}X)$ are close in $\HH^2$.
Using the theorem of Diaz and Kim again, we see that this is the case for $\Pi_i(\rho_{\lambda,X}(s))$ and $\Pi_i(\gr_{s'\lambda}X)$.
However, using lemma \ref{deltabounding} we see that the quotient 
$$\frac{l_{\gr_{n\lambda}\rho_{\lambda,X}(s)}(\gamma_i)}{l_{\gr_{n\lambda}\gr_{s'\lambda}(X)}(\gamma_i)}$$
of the lengths of a curve $\gamma_i \subset \lambda$ on $\gr_{n\lambda}\gr_{s'\lambda}X$ and $\gr_{n\lambda}\rho_{\lambda,X}(s)$ is bounded.

On the other hand, lemma \ref{twistbound} (or, alternatively, \cite[proposition 3.5]{KD07}) implies that the product of the length of $\gamma_i$ and the twist around $\gamma_i$ on $\gr_{n\lambda}Y$ is bounded independent of $n$ if $\lambda$ is short on $Y$. 
Therefore, the projections $\Pi_i(\gr_{n\lambda}\rho_{\lambda,X}(s))$ and $\Pi_i(\gr_{n\lambda}\gr_{t\lambda}X)$ stay bounded distance apart in $\HH^2$ for all $n$.
Now the product region theorem implies the claim.

As all estimates above depend continuously on the geometry of $\Pi_0(X)$, for any compact $K \subset \teich(S\setminus\lambda)$ there is a constant $r$ such that the claim is fulfilled for any $Y \in \mathcal{U}$ with $\Pi_0(Y) \in K$.
As the mapping class group of $S\setminus\lambda$ acts cocompactly on $\Pi_0(\mathcal{U})$ the desired statement follows.
\end{proof}

To prove a theorem concerning grafting rays through holonomy lifts, we first introduce a convenient notation for iterated grafting
$$\gr_{\lambda}^nX :=  \underbrace{\gr_{\lambda}(\gr_{\lambda}( \dots \gr_{\lambda}}_{n \, \mbox{times}}(X)\dots).$$

\begin{theorem}[Iterated holonomy lifts]
  \label{hollifts}
  Fix a closed oriented surface $S$ and a simple closed curve $\gamma$. Let $X$ be a hyperbolic structure on $S$, 
  such that $\gamma$ is shorter than $\widetilde{\epsilon}$. Consider the grafting ray
  $$g_0(t) = \gr_{t\gamma}X$$
  and the iterated holonomy lifts
  $$g_{n,m}(t) = \gr_{2\pi m\gamma}^n(\gr_{t\gamma}X)$$
  Then for any $n,m \in \NN$, $g_{n,m}$ is contained in the $r$-tube around $g_0$, where 
  $$r = C\cdot (l_X(\gamma))^{1/8}.$$
\end{theorem}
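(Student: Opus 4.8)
The plan is to show that for every $n,m,t$ the surface $g_{n,m}(t)=\gr_{2\pi m\gamma}^{n}\gr_{t\gamma}X$ lies within Teichm\"uller distance $C\cdot l_X(\gamma)^{1/8}$ of a \emph{single} point of the grafting ray $g_0$, namely a point of the form $\gr_{a\gamma}X=g_0(a)$ for a suitable weight $a=a(n,m,t)\ge 0$. Since $g_0(a)$ lies on $g_0$, this places $g_{n,m}(t)$ in the $r$-tube around $g_0$ with $r=C\cdot l_X(\gamma)^{1/8}$, and the whole curve $g_{n,m}$ is covered as $t$ ranges over $[0,\infty)$. We may assume $m\ge 1$, since for $m=0$ we have $g_{n,m}=g_0$ and there is nothing to prove.

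To produce the point $g_0(a)$ and the distance bound I would repeatedly apply Theorem~\ref{quasiflow} (iterating a multicurve) to ``unwind'' the $n+1$ nested grafting operations defining $g_{n,m}(t)$ into a single grafting, working \emph{from the outside in}. Writing $g_{n,m}(t)=\gr_{2\pi m\gamma}\gr_{2\pi m\gamma}\bigl(g_{n-2,m}(t)\bigr)$ and applying Theorem~\ref{quasiflow} with base surface $g_{n-2,m}(t)$ replaces the two outermost graftings by a single grafting $\gr_{\mu_1\gamma}$, where $\mu_1=(2\pi m)\,\widetilde{+}\,(2\pi m)$, at the cost of an error $C\cdot l_{g_{n-2,m}(t)}(\gamma)^{1/8}$. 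Since $\gr_{\mu_1\gamma}\bigl(g_{n-2,m}(t)\bigr)$ equals $\gr_{\mu_1\gamma}\gr_{2\pi m\gamma}\bigl(g_{n-3,m}(t)\bigr)$ on the nose, I can apply Theorem~\ref{quasiflow} again with base $g_{n-3,m}(t)$, and continue; after $n$ applications of Theorem~\ref{quasiflow} one arrives at $\gr_{a\gamma}X=g_0(a)$, with $a$ determined by the $\widetilde{+}$-recursion (and manifestly positive). The triangle inequality then yields
$$d_\teich\bigl(g_{n,m}(t),\,g_0(a)\bigr)\ \le\ C\sum_{j=0}^{n-2}l_{g_{j,m}(t)}(\gamma)^{1/8}\ +\ C\,l_X(\gamma)^{1/8}.$$

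It remains to observe that the lengths $l_{g_{j,m}(t)}(\gamma)$ decay geometrically in $j$. By the length estimate of Lemma~\ref{deltabounding}, grafting any surface along $2\pi m$ times its geodesic representative of $\gamma$ shrinks the hyperbolic length of $\gamma$ by a factor of at most $\pi/(\pi+2\pi m)=1/(1+2m)$; the same lemma gives $l_{g_{0,m}(t)}(\gamma)=l_{\gr_{t\gamma}X}(\gamma)\le l_X(\gamma)$. Hence $l_{g_{j,m}(t)}(\gamma)\le (1+2m)^{-j}\,l_X(\gamma)$, so the sum above is bounded by the convergent geometric series
$$C\,l_X(\gamma)^{1/8}\Bigl(1+\sum_{j\ge 0}(1+2m)^{-j/8}\Bigr)\ \le\ C\,l_X(\gamma)^{1/8}\Bigl(1+\frac{1}{1-3^{-1/8}}\Bigr),$$
a universal constant times $l_X(\gamma)^{1/8}$, independent of $n$, $m$ and $t$. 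This is the asserted bound on $r$.

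The main obstacle, and the reason for unwinding from the outside rather than the inside, is that Theorem~\ref{quasiflow} compares $\gr_\eta\gr_\lambda X$ with $\gr_{\eta\,\widetilde{+}\,\lambda}X$ only for $X$ itself, so to chain estimates one must never push an already-accumulated error through another grafting map -- which is not known to be $1$-Lipschitz, or even uniformly continuous, for $d_\teich$. Unwinding from the outside avoids this: at each stage the inner factor is \emph{genuinely} a grafting of the previous surface $g_{j,m}(t)$, and Theorem~\ref{quasiflow} is applied with that surface as its exact base. One still has to verify that the hypothesis of Theorem~\ref{quasiflow} holds at every stage, but this is immediate, since each intermediate base surface (and $X$) carries $\gamma$ with length $\le l_X(\gamma)\le\widetilde\epsilon$ by the length monotonicity just used, and the weights $\mu_j$ produced by the recursion may be arbitrarily large without affecting the estimate -- Theorem~\ref{quasiflow} constrains only the lengths of the supporting curves, not the grafting weights.
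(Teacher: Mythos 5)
Your proof is correct and follows essentially the same route as the paper's: you collapse the nested graftings one pair at a time from the outside in via Theorem~\ref{quasiflow}, noting that each base surface keeps $\gamma$ shorter than $\widetilde\epsilon$, and you control the accumulated error by the geometric decay $l_{g_{j,m}(t)}(\gamma)\le(1+2m)^{-j}l_X(\gamma)\le 3^{-j}l_X(\gamma)$ from Lemma~\ref{deltabounding}, summing the resulting $3^{-j/8}$ series to a universal constant. The only differences are cosmetic (the paper phrases the iteration starting from $g_{n+2,m}$ and uses $l_{n,m}=l_{\gr^n_{2\pi m\gamma}X}(\gamma)$ rather than $l_{g_{j,m}(t)}(\gamma)$ as the decaying quantity, which is harmless since both admit the bound $3^{-j}l_X(\gamma)$).
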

\begin{proof}
  Define
  $$l_{n,m} := l_{\gr_{2\pi m\gamma}^nX}(\gamma)$$
  As each $2\pi m$-grafting decreases the length of $\gamma$ by at least a factor of $1/3$ 
  (compare lemma \ref{deltabounding} \textit{i)}) we have
  $$l_{n,m} \leq \left(\frac{1}{3}\right)^nl_X(\gamma)$$
  Using theorem \ref{quasiflow} we obtain 
  $$d_\teich(g_{n+2,m}(t), \gr_{(2\pi m\gamma)\widetilde{+}(2\pi m\gamma)}g_{n,m}(t)) \leq C\cdot (l_{n,m})^{1/8}$$
  However, $(2\pi m\gamma) \widetilde{+} (2\pi m\gamma) = S_1\gamma$ for some $S_1$.
  Therefore, we can iterate
  $$d_\teich(\gr_{S_1\gamma}g_{n,m}(t), \gr_{(S_1\gamma)\widetilde{+}(2\pi m\gamma)}g_{n-1,m}(t)) \leq C\cdot (l_{n-1,m})^{1/8}.$$
  By repeating this estimate and combining it with the inequality for $l_{n,m}$ quoted above, we see
  $$d_\teich(g_{n+2,m}(t), \gr_{L(n,m,t)\gamma}X) \leq C\cdot(l_X(\gamma))^{1/8}\cdot\sum_{k=0}^n\left(\frac{1}{3^{1/8}}\right)^k$$
  for some $L(n,m,t)$. But, as the geometric series is converging, the sum on the right hand side of the 
  inequality is uniformly bounded in $n$ and the theorem follows.
\end{proof}
\begin{theorem}
   Let $X$ be any hyperbolic structure on $S$ and let $m \in \NN$ be given. Then there is a $R$ such that for 
   any $n \in \NN$, $g_{n,m}$ is contained in the $R$-tube around $g_0$. 
\end{theorem}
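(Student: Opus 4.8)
The plan is to exploit that grafting along a curve in the class of $\gamma$ changes the conformal structure only near $\gamma$. Concretely, I would fix once and for all a collar neighbourhood $N$ of the geodesic $\gamma$ on $X$, wide enough that the geodesic representative of $\gamma$ on every surface appearing below stays (up to isotopy) inside $N$; this can be done uniformly in $n$ and $t$ because iterated grafting only shortens $\gamma$ and keeps its geodesic representative in a controlled neighbourhood (Lemma \ref{deltabounding}). Then each $g_{n,m}(t)=\gr_{2\pi m\gamma}^n(\gr_{t\gamma}X)$ is conformally the \emph{fixed} bordered Riemann surface $X\setminus N$ with a single round annulus $A_{n,t}$ glued along $\partial N$. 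Writing $M_{n,t}=\Mod(A_{n,t})$ and letting $\tau_{n,t}\in\RR$ be the relative twisting of this gluing measured against the arclength parametrisation of $\gamma$ on $X$, the grafting ray has the same shape: $g_0(v)=\gr_{v\gamma}X$ is $X\setminus N$ with an annulus of modulus $\Mod(N)+v/l_X(\gamma)$ glued with zero twist. So $g_{n,m}(t)$ and a suitable point $g_0(v_{n,t})$, $v_{n,t}\ge 0$, have identical conformal pieces and differ only by the twist $\tau_{n,t}$; applying the twist map of Proposition \ref{twistmaps} to $A_{n,t}$ and the identity on $X\setminus N$ gives a marking-preserving quasiconformal map between them, whence
$$d_\teich\bigl(g_{n,m}(t),g_0(v_{n,t})\bigr)\ \le\ \frac{2}{\sqrt{1+4(M_{n,t}/\tau_{n,t})^2}-1}.$$
Thus everything reduces to a lower bound for $M_{n,t}/|\tau_{n,t}|$ that is uniform in $n,t$ and depends only on $X$ and $m$.

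To get that bound I would first let $N_0=N_0(X,m)$ be the least integer with $l_{\gr_{2\pi m\gamma}^{N_0}X}(\gamma)<\widetilde{\epsilon}$; this exists since by Lemma \ref{deltabounding} i) each $\gr_{2\pi m\gamma}$ multiplies the length of $\gamma$ by a factor $\le(1+2m)^{-1}\le\tfrac13$, and a preliminary $\gr_{t\gamma}$ only shortens it. For $1\le n\le N_0$ the surface $g_{n,m}(t)$ is built from $X\setminus N$ by a bounded number of graftings, and in each the reference parametrisation of $\gamma$ moves by a bounded amount — this is Lemma \ref{twistbound}, whose moduli are all controlled because the relevant lengths stay in a compact range — so $|\tau_{n,t}|$ is bounded in terms of $X,m$, while $M_{n,t}\ge 2\pi m/l_X(\gamma)>0$ because the last grafting cylinder already has that modulus; $n=0$ is trivial since $g_{0,m}(t)=g_0(t)$. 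For $n>N_0$ I would write $\tau_{n,t}$, up to a bounded error, as $\tau'_{n,t}+w_{n,t}$, where $\tau'_{n,t}$ (bounded as above) is contributed by the first $N_0+1$ graftings, which produce $g_{N_0,m}(t)$, and $w_{n,t}$ is the twisting of the remaining $n-N_0$ iterations of $\gr_{2\pi m\gamma}$ applied to $g_{N_0,m}(t)$, a surface on which $\gamma$ is short. By \cite[Proposition~3.5]{KD07} the product $|w_{n,t}|\cdot l_{g_{N_0,m}(t)}(\gamma)$ is bounded by a universal constant, \emph{independently of} $n$; since the cylinder inserted at the $(N_0{+}1)$st step contributes $2\pi m/l_{g_{N_0,m}(t)}(\gamma)$ to $M_{n,t}$, this yields both that $|w_{n,t}|$ is at most a constant multiple of $M_{n,t}$ and that $M_{n,t}\ge 2\pi m/\widetilde{\epsilon}$. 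Combining the cases gives the required uniform lower bound on $M_{n,t}/|\tau_{n,t}|$, and $R$ is read off from the displayed estimate.

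The hard part is this last step: one must know that the twist accumulated along the \emph{entire} infinite iteration stays only of the order of the modulus of the grafting annulus. This is precisely the uniformity in the number of iterations in \cite[Proposition~3.5]{KD07}, equivalently Lemma \ref{twistbound}, and it is the same mechanism that makes the corollary on Teichm\"uller geodesics work above. A secondary technical issue is to make the structural claim precise — that $\gr_{2\pi m\gamma}^n\gr_{t\gamma}X$ really is $X\setminus N$ with one annulus attached for a single, $n$- and $t$-independent choice of $N$ — which I would verify by carrying through the surgery picture of Lemma \ref{deltabounding} (compare \cite{McMullen:1998mz,KD07}). If one prefers, for $n\ge N_0$ one can instead invoke Theorem \ref{hollifts} with base surface $g_{N_0,m}(t)$ to place $g_{n,m}(t)$ within $C\widetilde{\epsilon}^{1/8}$ of the grafting ray $s\mapsto\gr_{s\gamma}g_{N_0,m}(t)$, and then run the surgery-and-twist analysis on that ray together with the finitely many rays $g_{n,m}$, $n\le N_0$.
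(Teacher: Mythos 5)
Your approach is genuinely different from the paper's: you try to realize every $g_{n,m}(t)$ as a fixed bordered Riemann surface $X\setminus N$ with a single annulus glued on, and then control a twist-to-modulus ratio directly. The paper instead picks $K$ large enough that $\gamma$ is $\widetilde{\epsilon}$-short on $g_{K,m}(t)$ and applies the already-proved Theorem \ref{hollifts} to dispose of the tail $n>K$, while the finitely many surfaces $g_{0,m}(t),\dots,g_{K,m}(t)$ are compared to $g_0(t)$ by bounding $d_\teich(Y,\gr_{2\pi m\gamma}Y)$ by a quantity monotone in $l_Y(\gamma)\le l_X(\gamma)$ and summing $K$ such steps. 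Your fallback suggestion in the last sentence (invoke Theorem \ref{hollifts} with base $g_{N_0,m}(t)$) is essentially this route, so it is the main body of your argument that constitutes the different approach.

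However, there are two real gaps in the direct argument. First, the structural claim --- that one can choose a single collar $N$, independent of $n$ and $t$, so that $g_{n,m}(t)$ is conformally $X\setminus N$ with one annulus glued --- is not established and is not an immediate consequence of Lemma \ref{deltabounding}. For it to hold, the hyperbolic geodesic $\gamma^{(k)}$ on $g_{k,m}(t)$ must lie, at every step $k$, inside the annulus complementary to $X\setminus N$; Lemma \ref{deltabounding} only places $\gamma^{(k)}$ near the flat core curve of the latest grafting cylinder, not inside any fixed annulus, and for small $n$ (where $\gamma$ need not be short on $X$) the needed containment is genuinely in doubt. The paper avoids this entirely by building explicit quasiconformal comparison maps that \emph{do not} assume the successive geodesics stay in a fixed annulus. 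Second, and more importantly, the twist estimate at the heart of your argument is not supported by the tools you cite. Lemma \ref{twistbound} bounds the twist introduced by a \emph{single} grafting step (the distortion of one uniformizing map), and the paper's quotation of \cite[Prop.~3.5]{KD07} concerns the one-parameter family $\gr_{n\lambda}Y$ (one grafting with growing weight), not the iterated composition $\gr_{\lambda}^{\,n}Y$; in both cases the bound is of the form $(\text{twist})\cdot l_{\gr_{n\lambda}Y}(\gamma)\le\mathrm{const}$, i.e.\ with the \emph{current} short length, not the starting length $l_{g_{N_0,m}(t)}(\gamma)$ as in your statement. To upgrade a per-step or single-grafting twist bound into a bound on the cumulative twist of the entire infinite iteration relative to the annulus modulus, one needs precisely the mechanism of Theorem \ref{quasiflow} (iterated grafting is, up to summable $l^{1/8}$ errors, a single grafting with large weight) --- that is, the paper's route. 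As written, your key inequality $|w_{n,t}|\cdot l_{g_{N_0,m}(t)}(\gamma)\le\mathrm{const}$ is an assertion without a proof, and its apparent source does not deliver it for iterated grafting.
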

\begin{proof}
  Once $K$ is big enough, theorem \ref{hollifts} yields that $g_{K+n,m}$ is contained in a $r$-tube around $g_{K,m}$. 

  But $d_\teich(X, \gr_{2\pi m\gamma}X) < K(l,m)$ for some $K(l,m)$ if the length of $\gamma$ on $X$ is
  smaller than $l$ (collapse the extended grafting cylinder to the old collar, see \cite[proof of proposition 3.4]{KD07} 
  for more details on this argument). As the length of $\gamma$ decreases along the
  grafting ray (lemma \ref{deltabounding}), this gives that $g_{K,m}$ is contained in some tube around $g_0$ by the 
  triangle inequality.
\end{proof}
   
\begin{cor}[holonomy lifts follow Teichmüller geodesic ray]
     The iterated holonomy lifts $g_{n,m}$ of a grafting sequence are contained in a tube around the Teichmüller geodesic through $X$ defined by $\gamma$.
\end{cor}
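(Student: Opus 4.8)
The corollary follows by chaining the preceding theorem with the known asymptotics of grafting rays, so the plan is essentially a two-step triangle-inequality argument. First I would invoke the preceding theorem: for the fixed $m$ it yields a radius $R = R(X,m)$ so that the image of every holonomy lift $g_{n,m}$ lies in the $R$-tube around the grafting ray $g_0(t) = \gr_{t\gamma}X$, uniformly in $n$. Second I would invoke the theorem of Diaz and Kim \cite{KD07} recalled in the introduction: the grafting ray $g_0$ is contained in an $L$-tube around the Teichm�ller geodesic ray issuing from $X$ in the direction $\gamma$, with $L$ depending only on $X$.

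Finally I would combine these. A tube of radius $\rho$ around a curve is, by the convention used throughout Section \ref{sec:holonomylifts}, the metric $\rho$-neighbourhood of its image; hence containment of tubes composes additively. If the image of $g_{n,m}$ lies within Hausdorff distance $R$ of the image of $g_0$, and the latter lies within distance $L$ of the Teichm�ller geodesic ray, then by the triangle inequality in $(\teich(S), d_\teich)$ every point of $g_{n,m}$ lies within distance $R+L$ of that ray. Thus all the $g_{n,m}$ lie in the $(R+L)$-tube around the Teichm�ller geodesic determined by $\gamma$, which is the assertion.

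There is no real obstacle here. The one point deserving a word is that the three curves carry a priori unrelated parametrizations, so one must phrase everything in terms of Hausdorff distance of images rather than pointwise comparison; but this is exactly the sense of ``tube'' already in force in this section, so nothing further is required. I would also note that the resulting radius $R+L$ depends on $X$ (and on $m$), which is harmless since the corollary only asserts the existence of some tube; a radius uniform in $m$ would instead require the sharper estimate of Theorem \ref{hollifts} together with a uniform control on the Diaz--Kim constant, which is not needed for the present statement.
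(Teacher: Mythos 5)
Your proof is correct and is essentially the same argument the paper gives: a two-step triangle inequality chaining a tube bound for $g_{n,m}$ about the grafting ray $g_0$ with the Diaz--Kim tube bound for $g_0$ about the Teichm\"uller geodesic. The one cosmetic difference is that the paper's proof cites Theorem~\ref{hollifts} for the first step (giving a radius uniform in $m$, at the cost of requiring $\gamma$ short on $X$), whereas you invoke the intermediate unnamed theorem (radius depending on $m$, no shortness hypothesis); you correctly note this trade-off, and either choice yields the stated corollary.
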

\begin{proof}
     This follows from theorem \ref{hollifts} and the fact that grafting rays
     are bounded distance apart from the corresponding Teichmüller geodesic (see \cite[theorem 4.3]{KD07})
\end{proof}
If we do not consider the holonomy lifts of a grafting ray, but instead grafting rays through holonomy lifts of the starting point, we get the following stronger result
\begin{theorem}[Grafting rays through holonomy lifts accumulate]
  \label{accumulate}
  Let $X$ be any hyperbolic surface and $\gamma$ a simple closed geodesic on $X$. Consider the grafting rays
  $$c_{n,m}(t) = \gr_{t\gamma}(\gr_{2\pi m\gamma}^nX).$$
  For large values of $n$, the $c_{n,m}$ accumulate exponentially fast
  $$d_\teich(c_{n+1,m}(t), c_{n,m}(2\pi + a_{n,m}t)) \leq C\cdot q^n$$
  for some $0<q<1, a_{n,m} > 1$ and a constant $C$ depending on $X$. In particular, these rays accumulate in the Hausdorff topology on Teichmüller space.
\end{theorem}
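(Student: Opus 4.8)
The plan is to obtain the whole statement from a single application of the iteration theorem (Theorem \ref{quasiflow}), using crucially that the intermediate surfaces get pinched along $\gamma$ geometrically fast. Write $Y_n := \gr_{2\pi m\gamma}^n X$, so that $c_{n,m}(t) = \gr_{t\gamma}(Y_n)$ and $Y_{n+1} = \gr_{2\pi m\gamma}(Y_n)$. First I would record the length bookkeeping: by the upper estimate of Lemma \ref{deltabounding} i), grafting along $2\pi m\gamma$ multiplies $l(\gamma)$ by at most $\pi/(\pi+2\pi m) = 1/(1+2m) \le 1/3$, so that $l_{Y_n}(\gamma) \le 3^{-n}\, l_X(\gamma)$ (this is the estimate already used in the proof of Theorem \ref{hollifts}). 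In particular there is an $n_0 = n_0(X,\gamma,m)$ with $l_{Y_n}(\gamma) \le \widetilde{\epsilon}$ for all $n \ge n_0$, $\widetilde{\epsilon}$ the constant of Theorem \ref{quasiflow}; this is exactly why the conclusion is only asserted for large $n$.

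The main step is then essentially a direct quotation of Theorem \ref{quasiflow}. Fix $n \ge n_0$ and apply it with base surface $Y_n$, with $\lambda = 2\pi m\gamma$ (the inner grafting) and $\eta = t\gamma$ (the outer grafting). Since $c_{n+1,m}(t) = \gr_{t\gamma}\big(\gr_{2\pi m\gamma}Y_n\big)$ and, by the definition of the weighted sum,
$$(t\gamma)\,\widetilde{+}\,(2\pi m\gamma) = \Big(\frac{\pi+2\pi m}{\pi}\,t + 2\pi m\Big)\gamma = \big((2m+1)t + 2\pi m\big)\gamma,$$
Theorem \ref{quasiflow} yields
$$d_\teich\big(c_{n+1,m}(t),\, c_{n,m}\big((2m+1)t + 2\pi m\big)\big) \le C_0\cdot l_{Y_n}(\gamma)^{1/8} \le C_0\, l_X(\gamma)^{1/8}\cdot\big(3^{-1/8}\big)^n,$$
with $C_0$ the universal constant of that theorem. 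This is precisely the asserted inequality, with $a_{n,m} = 2m+1 > 1$, $q = 3^{-1/8} \in (0,1)$, and $C = C_0\, l_X(\gamma)^{1/8}$ depending only on $X$ and $\gamma$; in fact one could even take $q = (2m+1)^{-1/8}$.

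For the last sentence I would iterate the estimate: setting $\phi(s) = (2m+1)s + 2\pi m$, telescoping over consecutive indices gives, for $n \ge n_0$ and every $k \ge 0$,
$$d_\teich\big(c_{n+k,m}(t),\, c_{n,m}(\phi^k(t))\big) \le \sum_{j=0}^{k-1} C q^{n+j} < \frac{C}{1-q}\,q^n.$$
Each $\phi^k$ is an increasing affine bijection of $[0,\infty)$ onto the subray $[\phi^k(0),\infty)$, so this says that every point of $c_{n+k,m}$ lies within $\frac{C}{1-q}q^n$ of $c_{n,m}$, and every point of $c_{n,m}$ with parameter $\ge \phi^k(0)$ lies within the same distance of $c_{n+k,m}$; hence on any compact subset of $\teich(S)$ the truncations of the $c_{n,m}$ form a Cauchy sequence for the Hausdorff metric, which is the claimed accumulation. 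I do not expect a genuine obstacle here: the content is carried entirely by Theorem \ref{quasiflow}, and the only points needing care are (a) checking that $n_0$ and $C$ depend only on $X$, $\gamma$, $m$ — immediate from $l_{Y_n}(\gamma) \le 3^{-n} l_X(\gamma)$ — and (b) noting that the reparametrizations $\phi^k$ omit the initial segments of the rays, which escape to infinity in $\teich(S)$ as $n \to \infty$ since $Y_n$ degenerates to a noded surface, so the accumulation is in the local (Chabauty) Hausdorff sense on compacta rather than in a global Hausdorff distance on all of $\teich(S)$.
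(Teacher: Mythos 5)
Your proof is correct and follows the same route as the paper: a single application of Theorem \ref{quasiflow} with base $Y_n=\gr_{2\pi m\gamma}^nX$, $\lambda=2\pi m\gamma$, $\eta=t\gamma$, combined with the length decay $l_{Y_n}(\gamma)\le 3^{-n}l_X(\gamma)$. You supply slightly more detail than the paper (the explicit value $a_{n,m}=2m+1$ from the weighted-sum formula, which incidentally shows the parameter shift is $2\pi m$, correcting the apparent typo $2\pi$ in the displayed statement, and a telescoping argument for the Hausdorff accumulation claim), but the underlying argument is the same.
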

Looking at this result, one might hope that also the holonomy lifts of grafting rays as in above theorem actually accumulate in the Hausdorff topology of Teichmüller space. However, the methods developed in this paper seem unsuitable to prove this. 
\begin{proof}
  The proof is very similar to the preceding one. We use the same notation. Once $l_{n,m}$ is small enough, theorem \ref{quasiflow} yields
  $$d_\teich(c_{n+1,m}(t), c_{n,m}(S_{n,m}t + 2\pi m)) \leq C\cdot (l_{n-1,m})^{1/8}$$
  Using the estimate 
  $$l_{n,m} \leq \left(\frac{1}{3}\right)^nl_X(\gamma)$$
  we then see the claim for short $l_{n,m}$. Furthermore the same estimate also gives that once $n$ is large 
  enough, $l_{n,m}$ will be arbitrary short, and the claim follows.
\end{proof}
We turn now to the case where we replace the curve $\gamma$ in above iteration theorems by a general integral lamination $\lambda = 2\pi n_1 \gamma_1 + \ldots + 2\pi n_m\gamma_m$.
Here, in general the statements will be false.

To see this, consider a typical example in the setting of theorem 5.5. 
Let $X$ be a hyperbolic surface, and $\gamma_1, \gamma_2$ be two curves on the surface having equal length. 
Let $\lambda = \pi \gamma_1 + 2\pi\gamma_2$. 
If we graft along $\lambda$, by lemma \ref{deltabounding} we see for the lengths
$$K\frac{2\theta}{2\theta + \pi} \leq \frac{l_{\gr_\lambda X}(\gamma_1)}{l_X(\gamma_1)} \leq \frac{\pi}{\pi + \pi},
\quad\quad K\frac{2\theta}{2\theta + 2\pi} \leq \frac{l_{\gr_\lambda X}(\gamma_2)}{l_X(\gamma_2)} \leq \frac{\pi}{\pi + 2\pi}$$
So, in each grafting step, the length of $\gamma_1$ decreases roughly by a factor of $1/2$, while the length of $\gamma_2$ gets scaled by $1/3$. Thus the quotient of the lengths $l_{\gr^n_\lambda X}(\gamma_2)/l_{\gr^n_\lambda X}(\gamma_1)$ will tend to 0.

On the grafting ray corresponding to $\lambda$ on the other hand, this length quotient will be bounded away from 0
(again, using lemma \ref{deltabounding}) and thus, by Wolpert's theorem, the rays through the holonomy lifts will move further and further away from the grafting ray.

\section{Geometric convergence of grafting rays and sequences}
\label{sec:geom_conv}

As another application of the methods developed above, we want to study geometric limits of grafting rays and sequences. 
Let us first precisely define what we mean by geometric convergence.
\begin{defi}[geometric convergence]
  Let $Z$ be a marked oriented Riemann surface of genus $g$ with $2n$ cusps. 
  We say that a sequence $X^i$ of closed marked oriented Riemann surfaces \textit{converges geometrically} to $Z$ if:

  For any $\epsilon>0$, and any collection of neighbourhoods $U_1,\ldots, U_{2n} \subset Z$ of the
  cusps, biholomorphic to the punctured unit disk $\Delta^*$, there is a number $N>0$ such that for all
  $k > N$ there are simple closed curves $\gamma_1^k, \ldots, \gamma_n^k$ on $X^k$ and a marked
  (orientation preserving) homeomorphism
  $$F_k : X^k \setminus\{\gamma_1^k, \ldots, \gamma_n^k\} \to Z$$
  (where the marking on $X^k \setminus\{\gamma_1^k, \ldots, \gamma_n^k\}$ is the one induced by the
  marking on $X^k$) such that $F^{-1}$ restricted to $Z\setminus (U_1 \union \ldots \union U_{2n})$ is
  $(1+\epsilon)$-quasiconformal.
\end{defi}

Given a hyperbolic surface $X$ and a weighted multicurve $\lambda = t_1\gamma_1 + \ldots + t_n\gamma_n$, we now construct a candidate  $\gr_{\infty\cdot \lambda}X$ for the ``endpoint'' of the grafting ray

\begin{defi}[Endpoint of grafting ray]
\label{endpoints}
Cut $X$ at $\lambda$ to obtain a hyperbolic surface $Z$ with $2n$ boundary curves $\gamma^1_i, \gamma^2_i$. 
Take $2n$ punctured disks with boundary
$$\overline{\Delta}^*_{i,j} = \{0<z\leq 1\} \subset \CC,\quad i=1,\ldots, n, \, j=1,2$$
and glue $S^1 \subset \overline{\Delta}_{i,j}^*$ (in unit euclidean speed) to $\gamma_i^j$ on $Z$ (in constant hyperbolic speed).
We call the resulting punctured Riemann surface $\gr_{\infty\cdot\lambda}$ the \textit{endpoint of the grafting ray}.  
\end{defi}
\begin{lemma}[grafting rays converge]
  \label{grafting_rays_converge}
  Let $X \in \teich(S)$ be a Riemann surface, $\lambda$ a weighted multicurve on $X$. Then the grafting ray $s \mapsto \gr_{s\cdot \lambda}X$
  converges geometrically to $\gr_{\infty\cdot \lambda}X$ as $s\to\infty$.
\end{lemma}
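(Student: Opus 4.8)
The plan is to write down, for every large $s$, an explicit marked orientation-preserving homeomorphism from $\gr_{s\cdot\lambda}X$ with $n$ suitable curves removed onto $\gr_{\infty\cdot\lambda}X$, which is the identity on a large compact core and conformal outside arbitrarily small neighbourhoods of the $2n$ cusps. Write $\lambda = t_1\gamma_1 + \ldots + t_n\gamma_n$ and $l_i = l_X(\gamma_i)$; on $\gr_{s\cdot\lambda}X$ let $\delta_i = \gamma_i\times\{st_i/2\}$ be the flat core curve of the $i$-th grafting cylinder. These are the $n$ curves we remove, and cutting along them produces exactly $2n$ ends, matching the $2n$ cusps of $\gr_{\infty\cdot\lambda}X$.

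The first step is to identify the conformal type of $\gr_{s\cdot\lambda}X\setminus\{\delta_1,\ldots,\delta_n\}$. By the definition of grafting this surface is $Z$ (the surface $X$ cut along the geodesics $\gamma_i$, as in Definition \ref{endpoints}) with the $2n$ flat grafting half-cylinders glued on: cutting at $\delta_i$ splits the $i$-th grafting cylinder into two half-cylinders of circumference $l_i$ and height $st_i/2$, each glued to one boundary circle $\gamma_i^1,\gamma_i^2$ of $Z$ along a constant-speed parametrization. Conformally such a half-cylinder is the standard flat cylinder $(\RR/\ZZ)\times[0,M_i(s)]$ with $M_i(s)=st_i/(2l_i)$, with $(\RR/\ZZ)\times\{0\}$ attached to $\gamma_i^j$. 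On the other hand, $\gr_{\infty\cdot\lambda}X$ is by Definition \ref{endpoints} the same surface $Z$ with $2n$ closed punctured disks $\overline{\Delta}^*_{i,j}$ glued along the $\gamma_i^j$, again by constant-speed parametrizations, and each $\overline{\Delta}^*_{i,j}$ is conformally the semi-infinite flat cylinder $(\RR/\ZZ)\times[0,\infty)$ with $S^1$ corresponding to $(\RR/\ZZ)\times\{0\}$. Matching the two parametrizations of each $\gamma_i^j$ (up to a conformal rotation of the attaching circle), the inclusion $(\RR/\ZZ)\times[0,M_i(s)]\hookrightarrow(\RR/\ZZ)\times[0,\infty)$ realises each grafting half-cylinder conformally as the initial subcylinder of the corresponding punctured disk of $\gr_{\infty\cdot\lambda}X$.

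Next I would define the comparison map $F_s$: the identity on $Z$, and on each half-cylinder the ``radial'' map $(x,u)\mapsto(x,h(u))$, where $h\colon[0,M_i(s)]\to[0,\infty)$ is an increasing homeomorphism equal to the identity on an initial segment $[0,c]$ and carrying $[c,M_i(s)]$ onto $[c,\infty)$. Because $h(u)\to\infty$ as $u\to M_i(s)$, this extends continuously over each puncture, so $F_s$ is an orientation-preserving homeomorphism $\gr_{s\cdot\lambda}X\setminus\{\delta_i\}\to\gr_{\infty\cdot\lambda}X$; it is the identity on the core $Z$ and takes the grafting collars to cusp neighbourhoods in the standard way, hence preserves the marking. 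Now fix $\epsilon>0$ and neighbourhoods $U_1,\ldots,U_{2n}$ of the cusps biholomorphic to $\Delta^*$: then $\gr_{\infty\cdot\lambda}X\setminus(U_1\union\ldots\union U_{2n})$ is compact and meets each attached punctured disk $\overline{\Delta}^*_{i,j}$ in a set bounded away from the puncture, so in the cylinder coordinates above it is contained in $Z$ together with finite subcylinders $(\RR/\ZZ)\times[0,c_k]$, $k=1,\ldots,2n$. Put $c=\max_k c_k$ and choose $N$ so large that $M_i(s)=st_i/(2l_i)>c$ for all $i$ whenever $s>N$. Constructing $F_s$ with this value of $c$, the inverse $F_s^{-1}$ restricted to $\gr_{\infty\cdot\lambda}X\setminus(U_1\union\ldots\union U_{2n})$ is, after the conformal identifications above, the identity, hence $1$-quasiconformal and in particular $(1+\epsilon)$-quasiconformal. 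This verifies the condition in the definition of geometric convergence.

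The only point requiring genuine care is the conformal bookkeeping in the second step: identifying $\gr_{s\cdot\lambda}X\setminus\{\delta_i\}$ with $Z$ plus flat cylinders and checking that the grafting-cylinder gluing and the punctured-disk gluing of Definition \ref{endpoints} agree up to a conformal rotation of each attaching circle. Everything else is a soft homeomorphism argument involving no metric estimates, which is why this lemma is far lighter than the main technical results of the paper.
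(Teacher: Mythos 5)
Your proof is correct and follows essentially the same strategy as the paper: both decompose $\gr_{s\lambda}X\setminus\{\delta_i\}$ and $\gr_{\infty\cdot\lambda}X$ as $Z$ together with cylinders glued along the $\gamma_i^j$, observe that for $s$ large the grafting half-cylinders conformally embed as initial segments of the cusp ends, and construct $F_s$ to be the identity on $Z$ and conformal on these initial segments, pushing all non-conformality into the removed cusp neighbourhoods $U_k$. Your explicit cylinder-coordinate bookkeeping (the radial map $(x,u)\mapsto(x,h(u))$ and handling general $U_k$ via compactness) and the paper's decomposition of each grafting cylinder into three subannuli $C_1\cup C_2\cup C_3$ with conformal maps on the outer two (after first reducing to $U_{i,j}=A_\delta$) are merely different presentations of the same idea.
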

\begin{proof}
  We need to show, that for any collection of neighbourhoods $U_{i,j}$ of the cusps and for sufficiently large $t$ there is a map 
  $$F_s : \gr_{s\lambda}X \setminus\{\gamma_1^k, \ldots, \gamma_n^k\} \to \gr_{\infty\cdot\lambda}X$$
  which is $(1+\epsilon)$-quasiconformal outside the $U_{i,j}$.

  Denote the glued in punctured discs on $\gr_{\infty\cdot\lambda}X$ by $\Delta^*_{i,j}$ as in definition \ref{endpoints} and fix biholomorphic charts $\{z\in \CC, 0<z<1\} \to \Delta^*_{i,j}$.
 Let $A_{\delta} = \{z \in \CC, 0<z<\delta\}$ be the radius-$\delta$ punctured discs in these charts. 

 As any neighbourhood of a cusp contains (the image of) $A_\delta$ for sufficiently small $\delta$, it suffices to 
 constuct the maps $F_t$ in the case where all $U_{i,j}$ are of the form $A_\delta$.

 We now consider the situation around one of the $\gamma_i$. 
 Once $s$ is large enough to ensure that the modulus of the grafting cylinder $C_{\gamma_i}$ at $\gamma_i$ 
 is larger than the modulus of both of the ``remaining parts'' $\Delta_{i,j}^*\setminus A_\delta = \{z\in\CC, \delta<z<1\}$
 for j=1,2 -- that is 
 $$\Mod(C_{\gamma_i}) = \frac{s}{l_X(\gamma_i)} \geq 2 \Mod(\Delta\setminus A_\delta)$$
 we can constuct a homeomorphism of $C_{\gamma_i} \setminus \gamma_i$ to $\Delta_{i,1}^*\union\Delta_{i,2}^*$ which is conformal outside $U_{i,1} \union U_{i,2}$.

 To do so, decompose the grafting cylinder into three round annuli
 $$C_{\gamma_i} = \gamma_i \times [0,s\cdot t_i] = C_1 \union C_2 \union C_3, \quad\quad C_i = \gamma \times [a_i, b_i]$$
 such that $\Mod(C_1) = \Mod(C_2) = \Mod(\Delta\setminus A_\delta)$. 
 
Now we can map $C_1$ conformally to $\Delta_{i,1}\setminus U_{i,1}$, and $C_3$ conformally to $\Delta_{i,2}\setminus U_{i,2}$. Choosing any homeomorphism $C_2\setminus\delta_i \to U_{i,1}\union U_{i,2}$ we obtain the desired map.
As both $\gr_{\infty\cdot\lambda}X$ and $\gr_{s\lambda}X$ are obtained by surgeries at $\gamma_i$, we can combine these maps with the identity on the complement of the grafting cylinders on $\gr_{s\lambda}X$ and obtain  
$$F_s : \gr_{s\lambda}X \setminus\delta_i \to \gr_{\infty\cdot\lambda}X$$
such that $F_s^{-1}$ is conformal outside the $U_{i,j}$.
\end{proof}
Note that the endpoint does not depend on the weigths on $\lambda$, but just its support.

\vspace{3mm}
We now want to prove a similar result for iterated grafting sequences. Let $X \in \teich(S)$ be a base point and choose a weighted multicurve $\lambda = t_1\gamma_1 + \ldots + t_n\gamma_n$ on $X$. Define the sequence
$$X^{m} = \gr_{\lambda}^mX$$
Denote by $\gamma^m_i$  the (hyperbolic) simple closed geodesic on $X^m$ in the free homotopy class of $\gamma_i$ and by $\delta_i^m$ the flat core curve of the grafting cylinder around $\gamma_i$ on $X^m$. 

As a first step we need to understand how the endpoints of the $\lambda$-grafting rays through the $X^m$ behave.
\begin{prop}[comparing endpoints of grafting rays]
  \label{raycompare}
  Let $X^m$ be the iterated grafting sequence defined above.

  Then the Teichmüller distance of the endpoints of the grafting rays through the terms of the sequence decreases exponentially:
  $$d_\teich(\gr_{\infty\cdot\lambda}X^m, \gr_{\infty\cdot\lambda}X^{m+1}) \leq C\cdot q^m$$
  for some $C>0, 0<q<1$.
\end{prop}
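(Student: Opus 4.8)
The plan is to construct, for each $m$, an explicit quasiconformal homeomorphism between $\gr_{\infty\cdot\lambda}X^{m+1}$ and $\gr_{\infty\cdot\lambda}X^m$ lying in the correct marking class, whose dilatation is controlled by the maximal hyperbolic length of the $\gamma_i$ on $X^m$, and then to observe that this length decays geometrically in $m$.

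First I would record the length decay. The upper bound in Lemma~\ref{deltabounding}~\textit{i)}, namely $l_{\gr_{\lambda}Y}(\gamma_i)\le\frac{\pi}{\pi+t_i}\,l_Y(\gamma_i)$, holds for an arbitrary hyperbolic surface $Y$ --- it comes from the holomorphic disc embedding and needs no shortness hypothesis. Iterating it along $X^{m}=\gr_{\lambda}^{m}X$ gives
$$\max_i l_{X^m}(\gamma_i)\ \le\ q_0^{\,m}\,\max_i l_X(\gamma_i),\qquad q_0:=\max_i\frac{\pi}{\pi+t_i}<1 .$$
In particular there is an $m_0=m_0(X,\lambda)$ such that for all $m\ge m_0$ every $\gamma_i$ is shorter than the constant $\widetilde\epsilon$ of Theorem~\ref{quasiflow} on $X^m$. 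The finitely many differences $d_\teich(\gr_{\infty\cdot\lambda}X^m,\gr_{\infty\cdot\lambda}X^{m+1})$ with $m<m_0$ are just finite numbers (both surfaces are points of the Teichm\"uller space of $S$ cut along $\lambda$), so after enlarging the final constant $C$ they may be ignored; the content lies in the range $m\ge m_0$.

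For $m\ge m_0$ I would obtain the comparison map as a limit of the maps built in Section~\ref{sec:amaps}. Apply the construction in the proof of Theorem~\ref{quasiflow} with base surface $X^m$ and second multicurve $\eta=s\lambda$, $s>0$: for every $s$ one gets a quasiconformal map $\gr_{s\lambda}(\gr_{\lambda}X^m)\to\gr_{\Gamma(s)}X^m$, where $\Gamma(s)$ has the same support as $\lambda$ and all weights tending to $\infty$ as $s\to\infty$, whose dilatation is bounded by $\exp\!\big(C\,(\max_i l_{X^m}(\gamma_i))^{1/8}\big)$ \emph{uniformly in $s$} --- this uniformity is precisely what Theorem~\ref{quasiflow} provides. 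This map is the identity outside fixed (i.e.\ $s$-independent) collar neighbourhoods of the geodesics $\gamma_i'$ on $\gr_{\lambda}X^m$, where it agrees with the comparison map of Lemma~\ref{compmaps}, and it is conformal on each grafting cylinder (the two cylinders being glued have equal modulus). As $s\to\infty$ the grafting cylinders exhaust half-infinite flat cylinders, that is punctured discs; the part of the map away from the cylinders does not move with $s$, and on the cylinders it converges locally uniformly to the conformal identification of the limiting punctured discs. Hence the maps converge locally uniformly to a quasiconformal homeomorphism
$$\Psi_m\colon\ \gr_{\infty\cdot\lambda}X^{m+1}=\gr_{\infty\cdot\lambda}\big(\gr_{\lambda}X^m\big)\ \longrightarrow\ \gr_{\infty\cdot\lambda}X^m$$
--- here Lemma~\ref{grafting_rays_converge}, together with the remark that the endpoint depends only on the support of the grafted multicurve, identifies the two limiting surfaces --- which preserves the marking and satisfies $K(\Psi_m)\le\exp\!\big(C\,(\max_i l_{X^m}(\gamma_i))^{1/8}\big)$, since the dilatation of a locally uniform limit is at most the $\liminf$ of the dilatations. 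Note that the auxiliary height rescaling $\gr_{\Gamma}X^m\to\gr_{\eta\widetilde{+}\lambda}X^m$ used in the proof of Theorem~\ref{quasiflow} becomes trivial in the limit, since rescaling the height of a half-infinite cylinder is conformal.

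Combining the two steps, $d_\teich(\gr_{\infty\cdot\lambda}X^m,\gr_{\infty\cdot\lambda}X^{m+1})\le\frac{1}{2}\log K(\Psi_m)\le\frac{C}{2}\big(q_0^{\,m}\max_i l_X(\gamma_i)\big)^{1/8}$, which is the claimed estimate with $q:=q_0^{1/8}\in(0,1)$ after adjusting $C$ (and enlarging it to absorb the finitely many terms $m<m_0$). The step I expect to be the main obstacle is the limiting argument of the previous paragraph: one must check that the Section~\ref{sec:amaps} construction genuinely passes to the limit as a homeomorphism of the punctured surfaces in the correct marking class --- in particular that no extra twisting is created as the grafting cylinders become infinite --- and that the dilatation bounds of Section~\ref{sec:amaps}, proved for large but finite moduli, survive passage to infinite moduli (they should, as every estimate there only improves as those moduli grow). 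As an alternative to an explicit limit one could instead appeal to upper semicontinuity of the Teichm\"uller metric under this kind of controlled pinching, but the explicit route is in keeping with the methods of the paper.
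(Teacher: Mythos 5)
Your proof is correct, uses the same key ingredient (the comparison map of Lemma~\ref{compmaps} / Theorem~\ref{quasiflow}) and the same length-decay input from Lemma~\ref{deltabounding}, but you reach the endpoint surfaces by a limiting argument that the paper avoids entirely. The paper's shortcut is to observe that both endpoint surfaces already live on $X^{m+1}$: since a punctured disc glued to either end of a finite cylinder is again (biholomorphically) a punctured disc, $\gr_{\infty\cdot\lambda}X^m$ can be described as $X^{m+1}=\gr_\lambda X^m$ cut at the flat core curves $\delta_i^m$ with punctured discs glued in, while $\gr_{\infty\cdot\lambda}X^{m+1}$ is by definition $X^{m+1}$ cut at the hyperbolic geodesics $\gamma_i^{m+1}$ with punctured discs glued in. The comparison map of Lemma~\ref{compmaps} is precisely a marked, quasiconformal self-map carrying the $\gamma_i^{m+1}$ (in natural parametrization) to the $\delta_i^m$ (in natural parametrization), so it extends conformally across the glued-in punctured discs and directly gives the bound $d_\teich(\gr_{\infty\cdot\lambda}X^m,\gr_{\infty\cdot\lambda}X^{m+1})\le C\bigl(\max_i l_{X^m}(\gamma_i)\bigr)^{1/8}$ with no $s\to\infty$ limit at all. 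What your route buys is that you only invoke Theorem~\ref{quasiflow} as a black box; what you pay for it is exactly the technical step you yourself flag --- verifying that the finite-$s$ comparison maps converge locally uniformly, that the rescaling $\gr_\Gamma\to\gr_{\eta\widetilde{+}\lambda}$ degenerates conformally, and that no parasitic twisting accumulates. All of these do check out (the maps are $s$-independent off the cylinders, conformal on them, and dilatation is lower semicontinuous under locally uniform limits), so your proof is sound, but the biholomorphic re-identification of the endpoints is the observation that makes the paper's proof a three-line affair.
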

 
\begin{proof}
 First we note that any two punctured disks $\{0<z\leq1\}$ and $\{0<z\leq r\}$ are biholomorphic.
 Thus, cutting $X^m$ at $\gamma_i^m$ and glueing in two puncured disks yields the same surface as glueing in a cylinder of length $s\cdot t_i$ first and then glue punctured disks to the core curve of that cylinder.

 In other words, the endpoint $\gr_{\infty\cdot\lambda}X^m$ is biholomorphic to the surface obtained by cutting $\gr_{\lambda}X^m = X^{m+1}$ at $\delta_i^m$ and glueing punctured disks to the boundary components.

 Now, using the comparison maps as in the proof of theorem \ref{quasiflow}, we see that
  $$d_\teich(\gr_{\infty\cdot\lambda}X^m, \gr_{\infty\cdot\lambda}X^{m+1}) \leq C\cdot \left(\max_{i=1,\ldots, n}l_{X^m}(\gamma_i)\right)^{1/8}$$
As the length of $\gamma_i^m$ on $X^m$ decreases exponentially (lemma \ref{deltabounding}), we conclude the proposition.
\end{proof}
\begin{cor}[Endpoints converge]
 \label{cauchy}
  The sequence $\gr_{\infty\cdot\lambda}X^m$ is a Cauchy sequence in the Teichmüller space of $S\setminus\bigcup\gamma_i$.
\end{cor}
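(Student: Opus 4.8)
The plan is to deduce this directly from Proposition \ref{raycompare}, which already provides the key geometric input: consecutive endpoints satisfy $d_\teich(\gr_{\infty\cdot\lambda}X^m, \gr_{\infty\cdot\lambda}X^{m+1}) \leq C\cdot q^m$ for fixed constants $C>0$ and $0<q<1$. So the corollary is essentially a statement about summability of a geometric series, plus a bookkeeping check that all the endpoints genuinely live in one and the same Teichm\"uller space.

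First I would note that for every $m$, the surface $\gr_{\infty\cdot\lambda}X^m$ is constructed by cutting $X^m$ along the geodesic representatives of the fixed multicurve $\gamma_1,\ldots,\gamma_n$ (the support of $\lambda$) and gluing in $2n$ punctured disks, with the marking inherited from the marking on $X^m$ (which in turn is inherited from $X$ via the iterated grafting maps $\gr_\lambda$). Hence each $\gr_{\infty\cdot\lambda}X^m$ is a marked Riemann surface of the topological type of $S$ with the curves $\gamma_i$ pinched, i.e.\ a point of $\teich(S\setminus\bigcup\gamma_i)$, and it makes sense to speak of their Teichm\"uller distances.

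Then I would run the triangle inequality: for $m<m'$,
$$d_\teich\!\left(\gr_{\infty\cdot\lambda}X^m,\ \gr_{\infty\cdot\lambda}X^{m'}\right) \leq \sum_{k=m}^{m'-1} d_\teich\!\left(\gr_{\infty\cdot\lambda}X^{k},\ \gr_{\infty\cdot\lambda}X^{k+1}\right) \leq \sum_{k=m}^{m'-1} C\, q^{k} \leq \frac{C}{1-q}\, q^{m}.$$
Since $q^m\to 0$ as $m\to\infty$, the right-hand side tends to $0$ uniformly in $m'$, so the sequence is Cauchy. There is no real obstacle here: the only thing to be careful about is that the constants $C$ and $q$ in Proposition \ref{raycompare} are genuinely independent of $m$ (which they are, since the exponential decay of $l_{X^m}(\gamma_i)$ in lemma \ref{deltabounding} has a base depending only on the grafting weights, not on $m$), and that the notion of distance is taken in the fixed space $\teich(S\setminus\bigcup\gamma_i)$ as just discussed.

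As a remark one could add that, since $\teich(S\setminus\bigcup\gamma_i)$ is complete in the Teichm\"uller metric, this Cauchy sequence in fact converges; we denote its limit by $\gr_{\infty\cdot\lambda}^\infty X$, and it is this limit that will serve as the geometric limit of the grafting sequence $\gr_\lambda^m X$ in the sense of the definition of geometric convergence above.
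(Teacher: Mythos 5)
Your proof is correct and is precisely the argument the paper leaves implicit: the exponential bound from Proposition~\ref{raycompare} plus the triangle inequality gives Cauchy-ness via a convergent geometric series. The bookkeeping remark that each $\gr_{\infty\cdot\lambda}X^m$ carries a well-defined marking in $\teich(S\setminus\bigcup\gamma_i)$ is a useful, if brief, addition.
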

As the Teichmüller space of a surface of finite type is complete, we see that the sequence of endpoints $\gr_{\infty\cdot\lambda}X^m$ has a limit $X^\infty$.
Now we are equipped to show the desired convergence theorem.
\begin{theorem}[Geometric convergence of grafting sequence]
  The $\lambda$-grafting sequence $X^m$ converges geometrically to $X^\infty$.
\end{theorem}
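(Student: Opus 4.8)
The plan is to deduce the statement by combining two facts already established: Lemma~\ref{grafting_rays_converge}, which says that each individual grafting ray converges geometrically to its endpoint, and the Teichm\"uller-metric convergence $\gr_{\infty\cdot\lambda}X^m\to X^\infty$ coming from Corollary~\ref{cauchy} and completeness. The bridge between them is the observation used in the proof of Proposition~\ref{raycompare}: for each $m$ the endpoint $\gr_{\infty\cdot\lambda}X^m$ is biholomorphic to the surface obtained from $X^{m+1}=\gr_\lambda X^m$ by cutting along the flat core curves $\delta_1,\dots,\delta_n$ of its grafting cylinders and gluing a punctured disk to each new boundary circle; under this identification the grafting cylinders of $X^{m+1}$, completed by the glued disks, become standard punctured-disk neighbourhoods of the cusps. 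Since $l_{X^m}(\gamma_i)$ decays geometrically in $m$ by Lemma~\ref{deltabounding}, the moduli $t_i/l_{X^m}(\gamma_i)$ of these cylinders tend to $\infty$, so for large $m$ the surface $X^{m+1}$ is, from the point of view of geometric convergence, already almost indistinguishable from $\gr_{\infty\cdot\lambda}X^m$.

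Concretely, I would fix $\epsilon>0$ and cusp neighbourhoods $U_1,\dots,U_{2n}$ of $X^\infty$. For $m$ large one has $d_\teich(\gr_{\infty\cdot\lambda}X^m,X^\infty)<\frac12\log(1+\epsilon)$, so there is a marking-preserving $(1+\epsilon)$-quasiconformal homeomorphism $\psi_m\colon\gr_{\infty\cdot\lambda}X^m\to X^\infty$. As $\psi_m$ carries cusps to the corresponding cusps, each $\psi_m^{-1}(U_j)$ is a neighbourhood of a cusp of $\gr_{\infty\cdot\lambda}X^m$ and hence contains a round punctured-disk neighbourhood $W_j^m$. Exactly as in the proof of Lemma~\ref{grafting_rays_converge} (with the role of the large parameter $s$ there now played by $1/l_{X^m}(\gamma_i)$), once the modulus $t_i/l_{X^m}(\gamma_i)$ of the grafting cylinder of $X^{m+1}$ at $\gamma_i$ exceeds the sum of the moduli of the two annuli $\Delta^*\setminus W_j^m$ belonging to the two cusps adjacent to $\gamma_i$, one decomposes that cylinder into three round subannuli and builds a marking-preserving homeomorphism
$$G_m\colon X^{m+1}\setminus\{\delta_1,\dots,\delta_n\}\longrightarrow\gr_{\infty\cdot\lambda}X^m$$
that is conformal outside $\bigcup_j W_j^m$. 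Setting $\gamma_j^{m+1}:=\delta_j$ and $F_{m+1}:=\psi_m\circ G_m$, the inverse $F_{m+1}^{-1}=G_m^{-1}\circ\psi_m^{-1}$ is $(1+\epsilon)$-quasiconformal on $\psi_m\bigl(\gr_{\infty\cdot\lambda}X^m\setminus\bigcup_j W_j^m\bigr)\supseteq X^\infty\setminus\bigcup_j U_j$, being the composition of the conformal map $G_m^{-1}$ with $\psi_m^{-1}$. This is exactly the estimate demanded by the definition of geometric convergence, and since every $X^k$ with $k$ large is of the form $X^{m+1}$ with $m=k-1$ large, the theorem follows.

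The one place where genuine quantitative care is needed — and where I expect the main difficulty to lie — is in checking that the modulus hypothesis of the previous step can be arranged for all large $m$ at once: although $t_i/l_{X^m}(\gamma_i)\to\infty$, the neighbourhoods $W_j^m$ a priori depend on $\psi_m$ and could shrink with $m$. Here one uses that the dilatations of the $\psi_m$ stay bounded (indeed tend to $1$), so that $\psi_m^{-1}$ distorts the moduli of annuli around the cusps by a bounded factor; consequently the $W_j^m$ may be chosen with $\Mod(\Delta^*\setminus W_j^m)$ bounded above independently of $m$, and the blow-up of the grafting moduli then closes the argument. This is precisely the point where one uses honest convergence in the Teichm\"uller space of $S\setminus\bigcup\gamma_i$ rather than merely the geometric convergence of each single grafting ray; everything else is a routine assembly of Lemma~\ref{grafting_rays_converge}, Proposition~\ref{raycompare} and Lemma~\ref{deltabounding}.
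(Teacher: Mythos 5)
Your route is genuinely different from the paper's, and it does contain a gap at exactly the place you flag. The paper does not compare $X^{m+1}$ to $\gr_{\infty\cdot\lambda}X^m$ step by step. Instead it invokes Theorem~\ref{quasiflow} (as in the proof of Theorem~\ref{hollifts}) to replace the long iterate $X^n=\gr_\lambda^nX$ by a \emph{single} grafting $\gr_{\widehat\lambda}\gr_\lambda^kX$ from a fixed early term $X^k$, paying only a dilatation that tends to $1$ as $k\to\infty$; it then applies Lemma~\ref{grafting_rays_converge} to the one fixed ray emanating from $X^k$. There the neighbourhoods $V_{i,j}=f^{-1}(U_{i,j})\subset\gr_{\infty\cdot\lambda}X^k$ and the critical weights $R_i$ are determined once and for all by $X^k$, and one finishes by taking $K$ large enough that the weights of $\widehat\lambda$ exceed the $R_i$. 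This device removes the uniformity problem entirely.

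Your justification of the uniform modulus bound is not yet a proof. You argue that because the $\psi_m$ have dilatation tending to $1$, the sets $\psi_m^{-1}(U_j)$ contain round disks $W_j^m$ in the chart $\Delta^*_{i,j}$ with $\Mod(\Delta^*_{i,j}\setminus W_j^m)$ uniformly bounded. But the chart $\Delta^*_{i,j}$ on $\gr_{\infty\cdot\lambda}X^m$ depends on $m$ -- it is glued along the geodesic $\gamma_i^m$, whose length $\ell_m=l_{X^m}(\gamma_i)$ tends to $0$ -- and bounded dilatation of $\psi_m$ by itself gives no control on where this $m$-dependent chart sits relative to $U_j$. What actually makes the step work (indeed in a stronger form) is that the chart sinks into the cusp: the half-collar $B_m\subset\gr_{\infty\cdot\lambda}X^m$ bounded by $\partial\Delta^*_{i,j}$ and the standard collar boundary of $\gamma_i^m$ has modulus $\theta(\ell_m)/\ell_m\to\infty$, hence $\psi_m(B_m)$ is an annulus about the cusp of the \emph{fixed} surface $X^\infty$ of modulus tending to infinity, and any annulus around a cusp on $X^\infty$ of sufficiently large modulus must have its inner complementary component inside $U_j$ (otherwise both complementary components meet a fixed compact part of $X^\infty$ and the modulus is bounded by an extremal-length compactness argument). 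Thus $\psi_m(\Delta^*_{i,j})\subset U_j$ for $m$ large and one can even take $W_j^m=\Delta^*_{i,j}$. With that extra observation your argument closes; without it the step you single out is genuinely incomplete, and it is precisely the step the paper avoids by working from a fixed $X^k$.
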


\begin{proof}
  Pick any neighbourhoods $U_{i,j}$ of the cusps on $X^\infty$ and let $\epsilon > 0$ be given.

  By the preceding corollary \ref{cauchy}, there is a $M$ such that $\gr_{\infty\cdot\lambda}X^m$ has a distance small 
  enough to $X^\infty$, such that there is quasiconformal homeomorphism $f^m: \gr_{\infty\cdot\lambda}X^m \to X^\infty$ 
  with dilatation smaller than $\sqrt{1+\epsilon}$ for all $m > M$.

  Using theorem \ref{quasiflow} as in the proof of theorem \ref{hollifts}, we see that for large 
  $K>k$ there is a quasiconformal map 
  $$g^{K,k}:\gr_{\lambda}^KX \to \gr_{\widehat{\lambda}}\gr_{\lambda}^kX$$
  whose quasiconformality constant converges to 1, as $k \to \infty$.
  Here, $\widehat{\lambda}$ is some weighted multicurve (the $K-k$ times iterated weighted sum of $\lambda$) 
  with the same supporting curves as $\lambda$ and weights which are unbounded in $K$.

  Pick $k$ large enough such that this quasiconformality constant is also less than $\sqrt{1+\epsilon}$.
  Now fix $f=f^k: \gr_{\infty\cdot\lambda}X^k \to X^\infty$ and set $V_{i,j} = f^{-1}(U_{i,j})$. 
  Recalling the proof of lemma \ref{grafting_rays_converge}, there are weights $R_i$, such that once $r_i > R_i$ we have
  $$F_{r_1\gamma_1+\ldots+r_n\gamma_n}:\gr_{r_1\gamma_1+\ldots+r_n\gamma_n}X^N \to \gr_{\infty\cdot\lambda}X^N$$
  such that $F^{-1}$ is conformal on the complement of $\bigcup V_{i,j}$

  Now we choose $K$ large enough to ensure that the weights on $\widehat{\lambda}$ are larger than the critical $R_i$. 
  By composing $g^{n,k}$ with $F_{\widehat{\lambda}}$ for $n > K$ we then obtain a map $X^n \to X^\infty$ which has the desired properties.
\end{proof}

\vspace{20 mm}
\bibliographystyle{math}
\bibliography{../../biblio}

\begin{thebibliography}{McM2}

\bibitem[Bus]{Buser:oq}
Peter Buser.
\newblock {\em Geometry and Spectra of compact Riemann Surfaces}.
\newblock Birkh{\"a}user, 1992.

\bibitem[DK]{KD07}
Raquel Diaz and Ingkang Kim.
\newblock {Asymptotic Behaviour of Grafting Rays}.
\newblock {\em arxiv.org preprint} {\bf arXiv:0709.0638v2}.

\bibitem[DW]{Dumas:2007fj}
David Dumas and Michael Wolf.
\newblock {Projective structures, grafting, and measured laminations}.
\newblock {\em Geometry \& Topology} {\bf 12}(2008), 351--386.

\bibitem[Gol]{Goldman:1987oe}
W.~M. Goldman.
\newblock {Projective structures with Fuchsian holonomy}.
\newblock {\em Journal of Differential Geometry} {\bf 25}(1987), 297--326.

\bibitem[Hub]{Hubbard:2006fk}
John~H. Hubbard.
\newblock {\em Teichm{\"u}ller Theory and Applications to Geometry, Topology
  and Dynamics}, volume~1.
\newblock Matrix Editions, 2006.

\bibitem[KT]{Kamishima:1992lr}
Yoshinobu Kamishima and Ser~P. Tan.
\newblock {Deformation Spaces on Geometric Structures}.
\newblock {\em Advanced Studies in Pure Mathematics} {\bf 20}(1992), 263--299.

\bibitem[LV]{Lehto:1965fj}
O.~Lehto and K.~I. Virtanen.
\newblock {\em Quasikonforme Abbildungen}.
\newblock Springer, 1965.

\bibitem[McM1]{McMullen:1994uo}
Curtis~T. McMullen.
\newblock {\em Complex dynamics and renormalization}.
\newblock Princeton University Press, 1994.

\bibitem[McM2]{McMullen:1998mz}
Curtis~T. McMullen.
\newblock {Complex Earthquakes and Teichm{\"u}ller theory}.
\newblock {\em Journal of the Americal Mathematical Society} {\bf 11}(April
  1998), 283--320.

\bibitem[Min]{Minsky:1996xe}
Yair~N. Minsky.
\newblock {Extremal Length Estimates and Product Regions in Teichm{\"u}ller
  Space}.
\newblock {\em Duke Mathematical Journal} {\bf 83}(May 1996), 249--286.

\bibitem[SW]{Scannell:2002yq}
Kevin~P. Scannell and Michael Wolf.
\newblock {The grafting map of Teichm{\"u}ller space}.
\newblock {\em Journal of the Americal Mathematical Society} {\bf 15}(2002),
  893--927.

\bibitem[Tan]{Tanigawa:1995fk}
Harumi Tanigawa.
\newblock {Grafting, Harmonic maps, and projective structures on surfaces}.
\newblock {\em Journal of Differential Geometry} {\bf 47}(1997), 399--419.

\bibitem[Thu]{Thurston:cr}
William~P. Thurston.
\newblock {\em The Geometry and Topology of Three-Manifolds}.
\newblock Princeton University, 1982.

\bibitem[Wol]{Wolpert:1979fj}
Scott Wolpert.
\newblock {The length spectra as moduli for compact Riemann surfaces}.
\newblock {\em Annals of Mathematics} {\bf 109}(1979), 323--351.

\end{thebibliography}

\bigskip

\noindent
Sebastian W. Hensel\\
Mathematisches Institut der Universität Bonn\\
Beringstraße 1\\
D-53115 Bonn\\

\smallskip

\noindent
e-mail: loplop@math.uni-bonn.de

\end{document}